\documentclass[a4paper]{amsart}

\usepackage[utf8]{inputenc} 
\usepackage[british]{babel} 
\usepackage{culmus} 
\usepackage{dsfont}

\usepackage{hyperref}

\begin{hyphenrules}{british}
\end{hyphenrules}

\usepackage{graphicx} 
\usepackage{booktabs} 
\usepackage{array} 
\usepackage{paralist} 
\usepackage{verbatim} 
\usepackage{subfig} 
\usepackage{scalerel,stackengine} 
\usepackage{bmpsize}
\usepackage{amssymb}
\usepackage{mathtools}
\usepackage{amsmath}	
\usepackage{amssymb}
\usepackage{amsthm}
\usepackage[mathcal]{eucal}
\usepackage{graphicx}
\usepackage{faktor} 
\usepackage{mathrsfs} 
\usepackage[all, cmtip]{xy} 
\usepackage{systeme}
\usepackage[titletoc]{appendix}
\usepackage{leftindex}
\usepackage{enumitem}
\usepackage{appendix}

\newcommand{\subscript}[2]{$#1 _ #2$}

\theoremstyle{plain}
\newtheorem{theorem}[subsection]{Theorem}
\newtheorem{lemma}[subsection]{Lemma}
\newtheorem{proposition}[subsection]{Proposition}
\newtheorem{corollary}[subsection]{Corollary}

\theoremstyle{definition}
\newtheorem{definition}[subsection]{Definition}

\theoremstyle{remark}

\newtheorem{examples}[subsection]{Examples}
\newtheorem{remark}[subsection]{Remark}

\stackMath
\newcommand\reallywidehat[1]{%
\savestack{\tmpbox}{\stretchto{%
  \scaleto{%
    \scalerel*[\widthof{\ensuremath{#1}}]{\kern-.6pt\bigwedge\kern-.6pt}%
    {\rule[-\textheight/2]{1ex}{\textheight}}
  }{\textheight}%
}{0.5ex}}%
\stackon[1pt]{#1}{\tmpbox}%
}
\newcommand{\pullback}[4]{{#1 \leftindex_{#2}{\times}_{#3} #4}}
\newcommand{\rel}[1]{\mathcal{R}_{\mathbb{#1}}} 
\newcommand{\R}[1]{R_{\mathbb{E}}}
\renewcommand{\P}[1]{P_{\mathbb{E}}}
\newcommand{\Ch}[1]{\mathfrak{Ch}_{\mathbb{#1}}} 
\newcommand{\la}{\langle}
\newcommand{\ra}{\rangle}
\usepackage{color}

\newcommand{\E}{\mathbb{E}}

\DeclareSymbolFont{alephbet}{HE8}{frank}{m}{n}
\SetSymbolFont{alephbet}{bold}{HE8}{frank}{b}{n}
\DeclareMathSymbol{\samech}{\mathord}{alephbet}{"F1}
\DeclareMathSymbol{\mem}{\mathord}{alephbet}{"EE}

\usepackage{chngcntr}
\counterwithin*{equation}{section}

\def\pullback{
 \ar@{-}[]+R+<4pt,-3pt>;[]+RD+<4pt,-6pt>%
 \ar@{-}[]+D+<3pt,-6pt>;[]+RD+<4pt,-6pt>}

\begin{document}

\title[The direction functor for Schreier extensions]{The direction functor for Schreier extensions of monoids}

\author[S.~Ambra]{Stefano Ambra}
\address[Stefano Ambra]{Dipartimento di Matematica ``Federigo Enriques'', Universit\`{a} degli Studi di Milano, Via Saldini 50, 20133 Milano, Italy}
\email{stefano.ambra@unimi.it}

\author[A.~Montoli]{Andrea Montoli}
\address[Andrea Montoli]{Dipartimento di Matematica ``Federigo Enriques'', Universit\`{a} degli Studi di Milano, Via Saldini 50, 20133 Milano, Italy}
\email{andrea.montoli@unimi.it}

\author[D.~Rodelo]{Diana Rodelo}
\address[Diana Rodelo]{Department of Mathematics, University of the Algarve, 8005-139 Faro, Portugal and Center for Research and Development in Mathematics and Applications (CIDMA), Department of Mathematics, University of Aveiro, 3810-193 Aveiro, Portugal} \email{drodelo@ualg.pt}

\thanks{This work was supported by the Shota Rustaveli National Science Foundation of Georgia (SRNSFG), through grant FR-24-9660, ``Categorical methods for the study of cohomology theory of monoid-like structures: an approach through Schreier extensions''.
The first and second authors are members of the Gruppo Nazionale per le Strutture Algebriche, Geometriche e le loro Applicazioni (GNSAGA) dell'Istituto Nazionale di Alta Matematica ``Francesco Severi''.
The third author acknowledges financial support by CIDMA (https://ror.org/05pm2mw36)
under the Portuguese Foundation for Science and Technology
(FCT, https://ror.org/00snfqn58), Grants UID/04106/2025 (https://doi.org/10.54499/UID/04106/2025)
and UID/PRR/04106/2025 (https://doi.org/10.54499/UID/PRR/04106/2025)}

\begin{abstract}
We observe that the process of associating an action to any Schreier extension of monoids with commutative and cancellative kernel is functorial. We show that this functor is a generalisation of the direction functor, used to give a categorical description of non-abelian cohomology in terms of extensions. We further prove that our functor is a conservative, product preserving cofibration and from this we conclude that its fibres are endowed with a canonical symmetric monoidal structure. The commutative monoids obtained as connected components of these symmetric monoidal categories are isomorphic to Patchkoria's second cohomology monoids of a monoid with coefficients in semimodules.
\end{abstract}

\subjclass[2020]{18G50, 20J06, 20M50, 18E13, 18C05}

\keywords{Schreier extension of monoids, direction functor, cohomology monoids, cancellative semimodules}

\maketitle

\section{Introduction}
The cohomology groups $H^n(B, A)$ of an abelian group $B$ with coefficients in an abelian group $A$ have a classical description in terms of exact sequences. Indeed, the first cohomology group $H^1(B, A)$ is isomorphic to the group of isomorphism classes of extensions of $B$ by $A$ (namely short exact sequences starting with $A$ and ending with $B$) with the Baer sum. A similar description of higher cohomology groups by means of $n$-extensions was obtained by Yoneda \cite{Yoneda}. The same results hold in any abelian category.

The situation for (non-abelian) group cohomology is more diversified. In fact, the whole set $\mathrm{Ext}(G, A)$ of isomorphism classes of extensions of a non-necessarily abelian group $G$ by an abelian group $A$ does not admit a cohomological interpretation. However, every group extension of $G$ by an abelian group $A$ induces a $G$-module structure on $A$; the set $\mathrm{Ext}(G, A)$ can then be partitioned into subsets of the form $\mathrm{Ext}(G, A, \varphi),$ where $\varphi \colon G \to \mathrm{Aut}(A)$ is an action of $G$ on $A,$ and, as shown in \cite{EML}, each set $\mathrm{Ext}(G, A, \varphi)$ is isomorphic to the second Eilenberg-Mac Lane cohomology group $H^2(G, A, \varphi)$ of $G$ with coefficients in the $G$-module $(A, \varphi).$ Interpretations of higher cohomology groups in terms of suitable exact sequences have been obtained in \cite{Holt, Huebschmann}. Similar results hold for different non-abelian algebraic structures, like associative algebras, Lie algebras, Leibniz algebras and many others.

The process of associating to a group extension with abelian kernel its induced action is functorial. The properties of this functor have been studied in detail in \cite{bourn-direction}, where this functor is seen as a particular instance of a very general categorical construction, called the \emph{direction functor}. The direction functor has as domain the category of objects with global support (meaning that the arrow to the terminal object is a regular epimorphism) which are endowed with an autonomous Mal'tsev operation, in a Barr-exact category $\mathcal{C};$ the codomain is the category of internal abelian groups in $\mathcal{C}.$ When applied to the slice category $\mathbf{Gp}/G,$ where $\mathbf{Gp}$ is the category of groups, the direction functor is precisely the functor sending every extension of $G$ with abelian kernel to the corresponding action. Indeed, the objects with global support in $\mathbf{Gp}/G$ are the surjective group homomorphisms $f \colon X \to G,$ and such an $f$ is endowed with a, necessarily autonomous, Mal'tsev operation if and only if its kernel congruence $\mathcal{E}q(f)$ centralises itself in the sense of \cite{pedicchio}, which is equivalent to saying that the kernel of $f$ is abelian. The double equivalence relation determined by the fact that $\mathcal{E}q(f)$ centralises itself, called the \emph{Chasles relation} in \cite{bourn-direction}, plays a key role in the construction of the direction functor: in fact, the image of $f$ under the direction functor is the split epimorphism with codomain $G$ and domain the coequalizer of the Chasles relation. Such a split epimorphism, having an abelian kernel, is an internal abelian group in $\mathbf{Gp}/G.$ The action corresponding to such split epimorphism, via the semidirect product construction, is precisely the action, induced by $f,$ of $G$ on the kernel of $f.$

It is shown in \cite{bourn-direction} that the direction functor has many good properties; in particular, it is a cofibration, it is conservative and it preserves finite products. Thanks to these properties, the abelian group structure on any object in its codomain can be lifted to a symmetric monoidal closed structure on its fibre. On the sets of connected components of such fibres there result then canonical abelian group structures. In the case $\mathcal{C} = \mathbf{Gp}/G,$ these abelian groups are precisely the cohomology groups $H^2.$ In \cite{Bourn-Rodelo} a similar description of higher cohomology groups using higher dimensional direction functors has been obtained for Barr-exact categories (actually Barr-exactness can be weakened, considering instead \emph{efficiently regular} categories, including more examples, like the category of topological groups; see \cite{Bourn-Rodelo}).

There have been different attempts to give a cohomological description of extensions of monoids, too. A crucial notion in this respect is the one of Schreier extension of monoids, introduced in \cite{Redei}. Schreier extensions of monoids retain several good properties of group extensions. It was shown in \cite{Hoff, MMS_Baer_sum, MMS_pf} that the set of isomorphism classes of Schreier extensions, inducing the same action, of a monoid $M$ by an abelian group $A$ (called \emph{special Schreier} extensions in \cite{schreier_book, BMMS_sem_forum}) has an abelian group structure which extends the usual Baer sum of group extensions, and such abelian groups are actually isomorphic to the cohomology groups $H^2(M, A, \varphi)$ of a monoid $M$ with coefficients in an $M$-module $(A, \varphi)$ of a cohomology theory which generalises straightforwardly the Eilenberg-Mac Lane cohomology of groups. Patchkoria introduced in \cite{P-I} a cohomology theory for monoids with coefficients in semimodules (which is a variation on yet another cohomology theory he introduced in \cite{Patchkoria77}), in which one gets commutative monoids of cohomology, instead of abelian groups. Moreover, he showed in \cite{P-II} that the second cohomology monoids $H^2(M, K, \varphi)$ of his theory, where $M$ is a monoid and $(K, \varphi)$ is a cancellative $M$-semimodule, describe Schreier extensions of $M$ by $K.$

In the present paper we show that, also in the case of monoids, the process of associating an action to a Schreier extension with commutative kernel is functorial and that this functor is, in a sense, a generalisation of the direction functor studied in \cite{bourn-direction}. The domain of our functor is the category $cc\text{-}SExt_M$ whose objects are the Schreier extensions of a monoid $M$ with commutative and cancellative kernel. To such an extension $\mathbb{E} : \xymatrix{K \ar@{>->}[r]^-k &X \ar@{->>}[r]^-f &M}$ we associate a Schreier reflexive relation (in the sense of \cite{schreier_book, BMMS_sem_forum}) $\mathcal{R}_{\mathbb{E}},$ which is automatically transitive (thanks to the results in \cite{schreier_book, BMMS_sem_forum}) but not symmetric, in general: the congruence generated by $\mathcal{R}_{\mathbb{E}}$ is precisely the kernel congruence $\mathcal{E}q(f)$ of $f,$ and $\mathcal{R}_{\mathbb{E}}$ and $\mathcal{E}q(f)$ coincide if and only if the kernel of $f$ is a group. We observe that the Schreier relation $\mathcal{R}_{\mathbb{E}}$ centralises itself; the corresponding double Schreier reflexive relation (whose existence and uniqueness are guaranteed by the results in \cite{BMMS_S-protomod}) is then a generalisation of the Chasles relation of \cite{bourn-direction}, and our functor associates with $\mathbb{E}$ a split epimorphism with codomain $M$ and domain the coequalizer of the generalised Chasles relation. This image is an internal commutative monoid in the category whose objects are Schreier extensions of a monoid $M.$ The result is then a functor
\[ d \colon cc\text{-}SExt_M \longrightarrow \mathbf{CMon}(cc\text{-}SExt_M) \]
whose codomain is the category of internal commutative monoids in $cc\text{-}SExt_M.$ We show that this functor still satisfies the key properties of the direction functor from \cite{bourn-direction}. This allows us to equip the fibres of the functor with symmetric monoidal structures in a way that the sets of connected components of such fibres turn out to be endowed with canonical commutative monoid structures. The so obtained monoids are precisely Patchkoria's second cohomology monoids built in \cite{P-I}.
The functor $d$ can actually be extended to a functor
\[ D \colon smod\text{-}SExt_M \longrightarrow \mathbf{CMon}(smod\text{-}SExt_M), \]
where $smod\text{-}SExt_M$ is the category whose objects are the Schreier extensions of $M$ inducing an $M$-semimodule structure ($cc\text{-}SExt_M$ is a full subcategory of $smod\text{-}SExt_M$). This broader functor shares the same properties of $d,$ hence the connected components of its fibres can also be equipped with canonical commutative monoid structures, that correspond to the second cohomology monoids of the cohomology theory considered by Patchkoria in \cite{Patchkoria77}.

\section{Definition and first properties of Schreier extensions of monoids}
\label{section:Definition and first properties of Schreier extensions of monoids}
In this paper we shall be interested in monoid extensions, namely sequences of monoids and monoid homomorphisms
\begin{equation}
\label{eqn:ses}
\xymatrix{{\mathbb{E}:K} \ar@{>->}[r]^-k &X \ar@{->>}[r]^-f &M}
\end{equation}
in which $(K,k)$ is a kernel of $f$ and $f$ is a regular epimorphism (i.e. surjective monoid homomorphism). Following the choice made in \cite{ML_Homology} for group extensions, we shall adopt an additive notation on $K$ and $X$ and a multiplicative notation on $M.$ It should be clear, nonetheless, that while we will eventually require $K$ to be commutative, no such assumption is ever made on $X,$ in general. We write $Ker(f)=k(K)=\{x\in X: f(x)=1\}.$ The category of monoids and monoid homomorphisms shall be denoted by $\mathbf{Mon}.$ \\

Given a monoid extension~\eqref{eqn:ses}, denote by $B_m,$ for $m\in M,$ the subset of $f^{-1}(m)$ defined by
\[
B_m=\Big\{u\in f^{-1}(m):\forall x\in f^{-1}(m), \ \exists! \, a\in K \ \text{such that} \ x=k(a)+u\Big\}.
\]
Observe that $B_m$ may very well be empty for some $m,$ but since $k$ is a monomorphism, one always has at least $0\in B_1.$ The elements of $B_m$ shall be called the \emph{representatives} of $m,$ and we shall denote by $B(\mathbb{E})=\bigcup_{m\in M}B_m$ the set of all representatives. It is a subset of $X,$ by construction.

In the next lemma, we collect some useful properties which immediately stem out of these definitions.
\begin{lemma}
\label{lemma:rep_invertible}
In the above notation:
\begin{enumerate}
	\item[\em(1)] For every $a,a^\prime\in K,$ $m\in M$ and $u\in B_m,$ the equality $k(a)+u=k(a^\prime)+u$ holds if and only if $a=a^\prime.$
	\item[\em(2)] If $u,v\in B_m,$ one has $u=k(a)+v$ for a unique $a\in U(K)$ (where $U(K)\subseteq K$ denotes the subgroup of invertible elements).
	\item[\em(3)] Conversely, if $u\in B_m$ and $a\in U(K),$ then $k(a)+u\in B_m.$
\end{enumerate}
\end{lemma}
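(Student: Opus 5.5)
All three items follow directly from the definition of $B_m$: the uniqueness clause in that definition, together with the injectivity of $k$ (since $(K,k)$ is a kernel, $k$ is a monomorphism).

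For (1), the implication from $a=a'$ is trivial. Conversely, suppose $k(a)+u=k(a')+u$ with $u\in B_m$, and set $x=k(a)+u$. Then $f(x)=f(k(a))f(u)=1\cdot m=m$, so $x\in f^{-1}(m)$. The definition of $B_m$ provides a unique $b\in K$ with $x=k(b)+u$. Both $a$ and $a'$ satisfy this, hence $a=a'$.

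For (2), let $u,v\in B_m$. Since $u\in f^{-1}(m)$ and $v\in B_m$, there is a unique $a\in K$ with $u=k(a)+v$. Symmetrically, since $v\in f^{-1}(m)$ and $u\in B_m$, there is a unique $b\in K$ with $v=k(b)+u$.
\begin{itemize}
\item Substituting, $u=k(a)+k(b)+u=k(a+b)+u=k(0)+u$. Part (1), applied with $u\in B_m$, gives $a+b=0$.
\item Likewise $v=k(b+a)+v=k(0)+v$, and (1) applied with $v\in B_m$ gives $b+a=0$.
\end{itemize}
Hence $a\in U(K)$. Uniqueness of $a$ already holds in $K$, so a fortiori in $U(K)$. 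The only point needing care is that $K$ is not assumed commutative, so both one-sided equations must be checked separately, as above.

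For (3), take $u\in B_m$ and $a\in U(K)$, and put $w=k(a)+u$. Then $f(w)=m$. Given $x\in f^{-1}(m)$, write $x=k(c)+u$ with $c\in K$ unique. Then
\[x=k(c)+k(-a)+k(a)+u=k(c-a)+w,\]
so $c-a$ is a valid choice. For uniqueness, suppose $x=k(d)+w$. Then $x=k(d+a)+u$, so $d+a=c$ by uniqueness for $u$, and therefore $d=c-a$. Thus $w\in B_m$.

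Nothing here is a real obstacle. The main thing to watch is the order of addition, since $X$ and $K$ need not be commutative: inverses must be inserted on the correct side, and two-sided invertibility must be verified separately in (2).
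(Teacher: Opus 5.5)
Your proof is correct and carries out the direct verification from the definition of $B_m$; the paper gives no written proof, only stating that these properties "immediately stem out of these definitions." One small remark: injectivity of $k$ is never actually used, since the uniqueness clause in the definition of $B_m$ is already phrased for elements of $K$.
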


\begin{definition}[\cite{Redei}]
\label{def:schreier}
We shall say that~\eqref{eqn:ses} is a \emph{Schreier extension} if $B_m\neq \emptyset$  for every $m\in M.$
\end{definition}

If~\eqref{eqn:ses} is a Schreier extension, then for every fixed $m\in M$ and $u\in B_m$ the equation $x=k\big(q_{m,u}(x)\big)+u$ defines a map of \emph{sets} (denoted with a dashed arrow in this work to emphasise that it is not a morphism in $\mathbf{Mon}$)
\[
\xymatrix{{q_{m,u}\colon f^{-1}(m)} \ar@{-->}[r] &{K, \ x\mapsto q_{m,u}(x).}}
\]
\begin{corollary}
\label{cor:group_B}
If~\eqref{eqn:ses} is a monoid extension, for every $m\in M$ and every $u\in B_m$ there is a bijection
\[
\xymatrix{{\vartheta=\vartheta_{m,u}\colon U(K)} \ar@{-->}[r]^-{\sim} &{B_m, \ a\mapsto k(a)+u.}}
\]
Thus, for any $u\in B_m,$ there results a group structure on $B_m$ having $u$ as neutral element, which is abelian if $U(K)$ is abelian. In particular, if~\eqref{eqn:ses} is a Schreier extension, for all $m,m^\prime\in M$ there is a bijection $B_m\cong B_{m^\prime},$ which is a group isomorphism with respect to the group structures induced by the $\vartheta$'s.
\end{corollary}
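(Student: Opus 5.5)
The bijection comes directly from Lemma~\ref{lemma:rep_invertible}. Fix $m\in M$ and $u\in B_m$, and define $\vartheta_{m,u}(a)=k(a)+u$ for $a\in U(K)$.
\begin{itemize}
\item It lands in $B_m$ by part (3).
\item It is injective by part (1): $k(a)+u=k(a')+u$ forces $a=a'$.
\item It is surjective by part (2): every $v\in B_m$ can be written as $v=k(a)+u$ with $a\in U(K)$.
\end{itemize}
No Schreier hypothesis is needed here; we only need $u\in B_m$ to exist, which is given.

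For the group structure, transport the one of $U(K)$ along $\vartheta_{m,u}$. That is, set $v\ast_u w=\vartheta(\vartheta^{-1}(v)+\vartheta^{-1}(w))$. Since $\vartheta(0)=k(0)+u=u$, the neutral element is $u$. Because $\vartheta$ is a bijection transporting the group operation, $(B_m,\ast_u)$ is a group isomorphic to $U(K)$. In particular it is abelian whenever $U(K)$ is. This step is a routine transport of structure.

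Now assume the extension is Schreier. Then $B_m$ and $B_{m'}$ are nonempty, so we may choose $u\in B_m$ and $u'\in B_{m'}$. The composite $\vartheta_{m',u'}\circ\vartheta_{m,u}^{-1}\colon B_m\to B_{m'}$ is a bijection, since it is a composite of bijections. It is a group isomorphism for the structures $\ast_u$ and $\ast_{u'}$, because each of these is by definition the structure making $\vartheta_{m,u}$, respectively $\vartheta_{m',u'}$, a group isomorphism with $U(K)$.

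The only point requiring care is that the group structure on $B_m$ depends on the chosen base point $u$. The isomorphism statement should therefore be read relative to the chosen $u$ and $u'$, and the argument above holds for any such choice. There is no genuine obstacle.
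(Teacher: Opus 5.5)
Your proposal is correct and follows essentially the same route as the paper: you get injectivity and surjectivity of $\vartheta_{m,u}$ from points (1) and (2) of Lemma~\ref{lemma:rep_invertible}, and then transport the group structure of $U(K)$ along $\vartheta_{m,u}$. You also make explicit two things the paper leaves implicit: that $\vartheta_{m,u}$ actually lands in $B_m$ (point (3)), and that the isomorphism $B_m\cong B_{m'}$ is the composite $\vartheta_{m',u'}\circ\vartheta_{m,u}^{-1}$.
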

\begin{proof}
By Lemma~\ref{lemma:rep_invertible}, $\vartheta_{m,u}$ is injective (point $(1)$) and surjective (point $(2)$). The desired group structure is then obtained by transferring on $B_m$ the group structure of $U(K)$ via $\vartheta_{m,u},$ i.e. by defining the sum $v+_uw=\vartheta_{m,u}\big(\vartheta_{m,u}^{-1}(v)+\vartheta_{m,u}^{-1}(w)\big)$ for all $v,w\in B_m.$
\end{proof}

Some other immediate consequences of Definition~\ref{def:schreier} are collected in the following lemma.
\begin{lemma}[cf. {\cite[Proposition 2.1.5]{schreier_book}}] For a Schreier extension~\eqref{eqn:ses}:
\label{prop:2.1.5}
\begin{enumerate}
	\item[\em{(1)}] $q_{1,0}k=id_K;$
	\item[\em{(2)}] for every $u\in B(\mathbb{E}),$ $q_{f(u),u}(u)=0;$
	\item[\em{(3)}] for every $a\in K$ and $u\in B(\mathbb{E}),$ $u+k(a)=k\Big(q_{f(u),u}\big(u+k(a)\big)\Big)+u;$
	\item[\em{(4)}] for every $x,x^\prime\in X,$ $u\in B_{f(x)}$ and $u^\prime\in B_{f(x^\prime)},$
\[
q_{f(x),u}(x)+q_{f(x^\prime),u^\prime}(x^\prime)=q_{f(x^\prime),u^\prime}\Big(k\big(q_{f(x),u}(x)\big)+x^\prime\Big).
\]
\end{enumerate}
\end{lemma}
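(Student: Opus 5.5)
All four points come from one principle: by Lemma~\ref{lemma:rep_invertible}(1), if $x\in f^{-1}(m)$ is written as $x=k(a)+u$ with $u\in B_m$, then $a$ is unique, so $a=q_{m,u}(x)$. Each item is therefore proved by exhibiting $x$ in the form $k(a)+u$ and reading off $q$. We also use that $k$ is an injective monoid homomorphism, so $f k$ is constant at $1$.

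For (1), take $a\in K$. Then $k(a)\in f^{-1}(1)$ and $k(a)=k(a)+0$, and $0\in B_1$. By uniqueness, $q_{1,0}(k(a))=a$. For (2), take $u\in B(\mathbb{E})$ with $m=f(u)$. Then $u=k(0)+u$, so $q_{m,u}(u)=0$. For (3), note that $f(u+k(a))=f(u)f(k(a))=f(u)$, so $u+k(a)\in f^{-1}(f(u))$. The defining equation of $q_{f(u),u}$, applied to $x=u+k(a)$, is exactly the stated identity.

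For (4), set $a=q_{f(x),u}(x)$ and $a'=q_{f(x'),u'}(x')$, so that $x'=k(a')+u'$. Then
\[
k(a)+x'=k(a)+k(a')+u'=k(a+a')+u',
\]
using associativity and that $k$ is a homomorphism. Since $f(k(a)+x')=f(x')$, this element lies in $f^{-1}(f(x'))$ and has been written in the form $k(b)+u'$ with $b=a+a'$. Lemma~\ref{lemma:rep_invertible}(1) then gives $q_{f(x'),u'}\big(k(a)+x'\big)=a+a'$, which is the claim.

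The only point needing care is to check, at each step, that the element lies in the correct fibre $f^{-1}(m)$ before applying uniqueness. Beyond that nothing is hard; the argument does not use commutativity of $K$, and in (4) the order of the summands $a+a'$ matches the order of composition in $X$.
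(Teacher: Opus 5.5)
Your proof is correct and matches the paper's argument essentially line by line. Each item is obtained by writing the element in question as $k(b)+u$ with $u$ a representative of its fibre and invoking the uniqueness built into $B_m$ (for (1) this is just injectivity of $k$), and your computation $k(a)+x'=k(a+a')+u'$ in (4) is exactly the paper's; one small wording point is that $fk$ being trivial comes from $(K,k)$ being the kernel of $f$, not from the injectivity of $k$.
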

\begin{proof}
\begin{enumerate}
	\item Given $a\in K,$ the element $q_{1,0}\big(k(a)\big)$ is uniquely determined by $k(a)=k\Big(q_{1,0}\big(k(a)\big)\Big),$ and $k$ is a monomorphism.
	\item One has $u\in B_m$ for some $m\in M,$ so that $f(u)=m$; thus $u$ is a representative of $f(u).$ Also, $u=k(0)+u$ and $q_{f(u),u}(u)$ is uniquely determined by $u=k\big(q_{f(u),u}(u)\big)+u.$ The equality follows from the definition of a Schreier extension.
	\item Immediate by observing that $u$ is a representative of $f\big(u+k(a)\big)=f(u).$
	\item The element $q_{f(x^\prime),u^\prime}\Big(k\big(q_{f(x),u}(x)\big)+x^\prime\Big)\in K$ is uniquely determined by the equation $k\big(q_{f(x),u}(x)\big)+x^\prime=k\Big(q_{f(x^\prime),u^\prime}\Big(k\big(q_{f(x),u}(x)\big)+x^\prime\Big)\Big)+u^\prime.$ Then
\begin{equation*}
\begin{split}
k\big(q_{f(x),u}(x)+q_{f(x^\prime),u^\prime}(x^\prime)\big)+u^\prime&=k\big(q_{f(x),u}(x)\big)+k\big(q_{f(x^\prime),u^\prime}(x^\prime)\big)+u^\prime\\
&=k\big(q_{f(x),u}(x)\big)+x^\prime,
\end{split}
\end{equation*}
and the desired equality holds from the definition of a Schreier extension.
\end{enumerate}
\end{proof}

From now on, in order to simplify the notation, we shall write $u_m$ to denote some representative of $m\in M$ and $q_{m,u}=q$ for all $m\in M$ and $u\in B_m.$ Thus, for example, given a Schreier extension~\eqref{eqn:ses} we have for every $x\in X$ (and $u_{f(x)}\in B_{f(x)}$) a unique $q(x)\in K$ such that
\begin{equation}
\label{eqn:x=kq(x)+u}
	x = kq(x)+u_{f(x)},
\end{equation}
and the equality in point $(4)$ of the previous lemma shall be expressed simply as $q(x)+q(x^\prime)=q\big(kq(x)+x^\prime\big).$

Since by Lemma~\ref{lemma:rep_invertible} any two representatives of the same $m$ differ by an invertible element, we have a characterisation of the Schreier extensions~\eqref{eqn:ses} in which the kernel is a \emph{group}.
\begin{proposition}
\label{prop:K_group}
Given a Schreier extension~$\eqref{eqn:ses},$ the following are equivalent:
\begin{enumerate}
	\item[\em{(1)}] $B(\mathbb{E})=X,$ i.e. $B_m=f^{-1}(m)$ for every $m\in M;$
	\item[\em{(2)}] $B_1=Ker(f);$
	\item[\em{(3)}] $K$ is a group.
\end{enumerate}
\end{proposition}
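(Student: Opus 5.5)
We prove the cycle of implications $(1)\Rightarrow(2)\Rightarrow(3)\Rightarrow(1)$, using Lemma~\ref{lemma:rep_invertible} throughout.

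For $(1)\Rightarrow(2)$, specialise $B_m=f^{-1}(m)$ to $m=1$. Since $f^{-1}(1)=Ker(f)$, we get $B_1=Ker(f)$ immediately.

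For $(2)\Rightarrow(3)$, take $a\in K$. Then $k(a)\in Ker(f)=B_1$, and also $0\in B_1$. By Lemma~\ref{lemma:rep_invertible}(2) applied to $u=k(a)$ and $v=0$, there is $b\in U(K)$ with $k(a)=k(b)+0=k(b)$. Because $k$ is injective, $a=b\in U(K)$. Hence $K=U(K)$, so $K$ is a group.

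For $(3)\Rightarrow(1)$, fix $m\in M$ and $x\in f^{-1}(m)$. The extension is Schreier, so we may choose some $u\in B_m$. By the definition of $B_m$, we can write $x=k(a)+u$ for some $a\in K$. Since $K$ is a group, $a\in U(K)$. Lemma~\ref{lemma:rep_invertible}(3) then gives $x=k(a)+u\in B_m$. Thus $f^{-1}(m)\subseteq B_m$, and the reverse inclusion holds by definition, so $B_m=f^{-1}(m)$.

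None of the steps is a real obstacle; the argument is bookkeeping with the three points of Lemma~\ref{lemma:rep_invertible}. The Schreier hypothesis is needed only in $(3)\Rightarrow(1)$, to guarantee $B_m\neq\emptyset$.
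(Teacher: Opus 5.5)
Your proof is correct and follows the paper's argument. You use the same cycle $(1)\Rightarrow(2)\Rightarrow(3)\Rightarrow(1)$, and you obtain $(2)\Rightarrow(3)$ exactly as the paper does, from Lemma~\ref{lemma:rep_invertible}(2) with $u=k(a)$ and $v=0$; the only cosmetic difference is in $(3)\Rightarrow(1)$, where you cite Lemma~\ref{lemma:rep_invertible}(3) directly while the paper checks by hand that $k(a)+u_m$ is a representative via $a^{\prime\prime}=q(x)-a$.
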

\begin{proof}
It is clear that $(1)$ implies $(2),$ and $(3)$ follows from $(2)$ by setting $u=k(a)$ and $v=0$ in Lemma~\ref{lemma:rep_invertible}$(2),$ for any $a\in K.$ Now, to prove that $(3)$ implies $(1),$ it is enough to show that if $a\in K$ and $u_m$ is a representative, then $k(a)+u_m$ is also a representative: but if $x\in f^{-1}(m),$ then $x=kq(x)+u_m$ by~\eqref{eqn:x=kq(x)+u}, so that $x=k(a^{\prime\prime})+(k(a)+u_m)$ for $a^{\prime\prime}\in K$ uniquely determined as $a^{\prime\prime}=q(x) -a.$
\end{proof}

Another relevant consequence of Definition~\ref{def:schreier} is the following one which proves that Schreier extensions are, indeed, short exact sequences.
\begin{proposition}[{Cf. \cite[Section 4]{P-II}}]
\label{prop:f_normal}
If~\eqref{eqn:ses} is a Schreier extension, then $f$ is a cokernel of $k,$ so that~\eqref{eqn:ses} is a short exact sequence in $\mathbf{Mon}.$
\end{proposition}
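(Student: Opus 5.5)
We must show that $f$ is a cokernel of $k$ in $\mathbf{Mon}$. Concretely: given any monoid homomorphism $g\colon X\to Y$ with $gk=0$ (i.e. $gk(a)=1$, the trivial element, for all $a\in K$), there is a unique homomorphism $\bar g\colon M\to Y$ with $\bar g f=g$. Uniqueness is immediate because $f$ is surjective, hence an epimorphism. So the whole task is existence.

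For existence, the natural candidate is to define $\bar g(m)=g(x)$ for any $x\in f^{-1}(m)$. The point to check is that this does not depend on the choice of $x$. Fix a representative $u_m\in B_m$, which exists because the extension is Schreier (Definition~\ref{def:schreier}). By \eqref{eqn:x=kq(x)+u}, every $x\in f^{-1}(m)$ can be written as $x=kq(x)+u_m$. Then
\[ g(x)=g\big(kq(x)\big)\,g(u_m)=g(u_m), \]
so $g$ is constant on each fibre $f^{-1}(m)$, with value $g(u_m)$. Hence $\bar g$ is well defined, and $\bar g f=g$ by construction.

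It remains to see that $\bar g$ is a monoid homomorphism. This follows from the surjectivity of $f$ and the fact that $f$ and $g$ are homomorphisms. Given $m,m'\in M$, choose $x\in f^{-1}(m)$ and $x'\in f^{-1}(m')$. Then $x+x'\in f^{-1}(mm')$, so
\[ \bar g(mm')=g(x+x')=g(x)g(x')=\bar g(m)\bar g(m'). \]
Also $\bar g(1)=g(0)=1$, since $0\in f^{-1}(1)$.

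The only real content is the independence step. In general monoids, the kernel of a surjection does not determine the congruence $\mathcal{E}q(f)$, so $f$ need not be a cokernel of $k$. Here it is the Schreier condition that makes every element of a fibre equal to a kernel element plus a fixed representative, and that is exactly what forces $g$ to be constant on fibres. Finally, since $k$ is a kernel of $f$ by assumption and $f$ is now a cokernel of $k$, the sequence \eqref{eqn:ses} is short exact in $\mathbf{Mon}$.
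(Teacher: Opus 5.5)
Your proof is correct and follows essentially the same route as the paper: use a representative $u_m$ to show that $g$ is constant on each fibre $f^{-1}(m)$, define the factorisation by $m\mapsto g(u_m)$, and get uniqueness from the surjectivity of $f$. The only minor differences are that you check multiplicativity directly from fibre-constancy with arbitrary preimages $x+x'\in f^{-1}(mm')$, whereas the paper re-expands $u_m+u_{m'}=kq(u_m+u_{m'})+u_{mm'}$, and that you also verify unit preservation explicitly.
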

\begin{proof}
Let $g:X\rightarrow Z$ be a monoid homomorphism such that $gk=0.$ Then $g$ is constant on every fibre $f^{-1}(m)$ of $f.$ Indeed, since~\eqref{eqn:ses} is a Schreier extension we have $B_m\neq \emptyset$; let $u_m\in B_m.$ If $x, x'\in f^{-1}(m),$ then $x=kq(x)+u_m$ and $x'=kq(x')+u_m$ by~\eqref{eqn:x=kq(x)+u}. It follows that $g(x)=g(u_m)=g(x').$ Note that, if $u_m,v_m$ are two representatives of $m,$ then $g(u_m)=g(v_m),$ as $u_m,v_m\in f^{-1}(m).$ Define $\varphi\colon M\rightarrow Z$ by setting $\varphi(m)=g(u_m),$ where $u_m$ is any representative of $m.$ We proved above that $\varphi$ is well defined and, as a function, it clearly satisfies $\varphi f=g.$ To see that it is also a monoid homomorphism, let $m,m^\prime\in M$: then $u_m+u_{m^\prime}=kq(u_m+u_{m'})+u_{m\cdot m^\prime},$ again by~\eqref{eqn:x=kq(x)+u}, and by using the fact that $gk=0$ we have
\begin{equation*}
\begin{split}
\varphi(m\cdot m^\prime)&=g(u_{m\cdot m^\prime})=g(kq(u_m+u_{m'})+u_{m\cdot m^\prime})\\
&=g(u_m+u_{m^\prime})=g(u_m)\cdot g(u_{m^\prime})=\varphi(m)\cdot\varphi(m^\prime).
\end{split}
\end{equation*}
The uniqueness of $\varphi$ is immediate, since $f$ is epimorphic.
\end{proof}

It will be shown in Example~\ref{ex:schreier_extensions}.6 that the converse of this result does not hold in general.

The notion of Schreier extension comes with a coherent notion of morphism between Schreier extensions:

\begin{definition}
\label{dfn:morphism of Schreier exts}
A \emph{morphism of Schreier extensions} $\mathbb{E}\rightarrow\mathbb{E}^\prime$ is a morphism of monoid extensions
\begin{equation}
\begin{aligned}
\label{eqn:morphism_of_ses}
\xymatrix{
{\mathbb{E}:K} \ar@{}[rd]|{\mathrm{(a)}} \ar@<1.6ex>[d]_-{\alpha_1} \ar@{>->}[r]^-k &X \ar@{}[rd]|{\mathrm{(b)}} \ar[d]_-{\alpha_2} \ar@{->>}[r]^-f &M \ar[d]^-{\alpha_3} \\
{\mathbb{E^\prime}:K^\prime} \ar@{>->}[r]_-{k^\prime} &{X^\prime} \ar@{->>}[r]_-{f^\prime} &{M^\prime},
}
\end{aligned}
\end{equation}
meaning that $\alpha_1,\alpha_2,\alpha_3$ are monoids homomorphisms such that the squares $\mathrm{(a)}$ and $\mathrm{(b)}$ commute, satisfying the additional condition $\alpha_2(B(\mathbb{E}))\subseteq B(\mathbb{E}^\prime).$
\end{definition}

By the commutativity of $\mathrm{(b)},$ the restriction of $\alpha_2$ to $B_m$ gives a map $\alpha_2\colon B_m\rightarrow B^\prime_{\alpha_3(m)}$ (the latter denoting the set of representatives of $\alpha_3(m)$ for $\E'$), which is a group homomorphism with respect to the group structures of Corollary~\ref{cor:group_B}, because for every $u\in B_m$ the square
\[
\xymatrixcolsep{4pc}
\xymatrix{
{U(K)} \ar[d]_-{\alpha_1} \ar[r]_-{\sim}^-{\vartheta_{m,u}} &{B_m} \ar[d]^-{\alpha_2} \\
{U(K^\prime)} \ar[r]^-{\sim}_-{\vartheta_{\alpha_3(m),\alpha_2(u)}} &{B^\prime_{\alpha_3(m)}}
}
\]
commutes (using the commutativity of $\mathrm{(a)}$ in~\eqref{eqn:morphism_of_ses}).

It is a consequence of Lemma~\ref{lemma:rep_invertible} that, in order to prove that a morphism of monoid extensions~\eqref{eqn:morphism_of_ses} between Schreier extensions is a morphism of Schreier extensions, it is enough to show that for all $m\in M$ \emph{some} representative $u_m$ of $m$ in $\mathbb{E}$ is mapped by $\alpha_2$ to a representative in $\mathbb{E^\prime}$:
\begin{proposition}
\label{prop:rep_pres}
Consider a morphism~\eqref{eqn:morphism_of_ses} of (not necessarily Schreier) monoid extensions
and suppose that $\alpha_2(u_m)\in B^\prime_{\alpha_3(m)}$ for some $u_m\in B_m$ (where $m\in M$). Then $\alpha_2(v_m)\in B^\prime_{\alpha_3(m)}$ for all $v_m\in B_m.$
\end{proposition}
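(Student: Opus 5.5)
By Lemma~\ref{lemma:rep_invertible}(2), any $v_m\in B_m$ can be written as $v_m=k(a)+u_m$ for a unique $a\in U(K)$. I will push this decomposition forward along the morphism~\eqref{eqn:morphism_of_ses}. Since $\alpha_2$ is a monoid homomorphism and square $\mathrm{(a)}$ commutes,
\[
\alpha_2(v_m)=\alpha_2 k(a)+\alpha_2(u_m)=k^\prime\alpha_1(a)+\alpha_2(u_m).
\]

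Next I check that $\alpha_1(a)$ is invertible. Monoid homomorphisms preserve invertible elements: if $a+b=0=b+a$ in $K$, then $\alpha_1(a)+\alpha_1(b)=0=\alpha_1(b)+\alpha_1(a)$ in $K^\prime$. Hence $\alpha_1(a)\in U(K^\prime)$.

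Finally, $\alpha_2(u_m)\in B^\prime_{\alpha_3(m)}$ by hypothesis. Lemma~\ref{lemma:rep_invertible}(3), applied to the extension $\mathbb{E}^\prime$, then gives
\[
\alpha_2(v_m)=k^\prime\alpha_1(a)+\alpha_2(u_m)\in B^\prime_{\alpha_3(m)}.
\]

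This is all that is required. Note that Lemma~\ref{lemma:rep_invertible} holds for arbitrary monoid extensions, so the Schreier condition is not used, consistent with the statement.

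There is no genuine obstacle. The only point needing care is that Lemma~\ref{lemma:rep_invertible}(3) requires an \emph{invertible} element of the kernel. This is exactly why we need $\alpha_1(a)\in U(K^\prime)$, which the argument above supplies.
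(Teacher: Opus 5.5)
Your proposal is correct and follows the paper's proof essentially step for step. Both write $v_m=k(a)+u_m$ with $a\in U(K)$ by Lemma~\ref{lemma:rep_invertible}(2), push this forward to $\alpha_2(v_m)=k^\prime\alpha_1(a)+\alpha_2(u_m)$, note that $\alpha_1(a)\in U(K^\prime)$, and conclude by Lemma~\ref{lemma:rep_invertible}(3) applied to $\mathbb{E}^\prime$.
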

\begin{proof}
If $u_m,v_m\in B_m,$ by Lemma~\ref{lemma:rep_invertible}$(2)$ we have $v_m=k(a)+u_m$ for a unique $a\in U(K),$ and $f^\prime\big(\alpha_2(v_m)\big)=\alpha_3\big(f(v_m)\big)=\alpha_3(m).$ Moreover, $\alpha_1(a)$ is invertible in $K^\prime$ because $a$ is invertible in $K$ and $\alpha_1$ is a monoid homomorphism. We have $\alpha_2(v_m)=\alpha_2\big(k(a)\big)+\alpha_2(u_m)=k^\prime\big(\alpha_1(a)\big)+\alpha_2(u_m),$ which proves that $\alpha_2(v_m)\in B^\prime_{\alpha_3(m)}$ by Lemma~\ref{lemma:rep_invertible}$(3).$
\end{proof}

One can prove that the Short Five Lemma, which is \emph{not} valid in general for monoids (as the category $\mathbf{Mon}$ is not protomodular \cite{bourn-proto}), does hold for Schreier extensions.

\begin{proposition}[{\cite[Proposition 4.5]{P-II}}]
\label{prop:short_five_lemma_schreier}
Consider a morphism~\eqref{eqn:morphism_of_ses} of Schreier extensions. Then:
\begin{enumerate}
	\item[\em{(1)}] If $\alpha_1$ and $\alpha_3$ are monomorphisms (i.e. injective monoid homomorphisms), $\alpha_2$ is a monomorphism;
	\item[\em{(2)}] If $\alpha_1$ and $\alpha_3$ are regular epimorphisms (i.e. surjective monoid homomorphisms), $\alpha_2$ is a regular epimorphism.
\end{enumerate}
Thus, in particular, if $\alpha_1$ and $\alpha_3$ are isomorphisms, so is $\alpha_2.$
\end{proposition}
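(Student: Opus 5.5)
We argue elementwise, using the decomposition~\eqref{eqn:x=kq(x)+u} in both extensions together with the fact that $\alpha_2$ sends representatives to representatives (Definition~\ref{dfn:morphism of Schreier exts}). The key identity is the following. For $x\in X$ with $f(x)=m$, choose $u_m\in B_m$ and write $x=kq(x)+u_m$. Then
\[
\alpha_2(x)=\alpha_2kq(x)+\alpha_2(u_m)=k'\alpha_1q(x)+\alpha_2(u_m),
\]
and $\alpha_2(u_m)\in B'_{\alpha_3(m)}$. Thus the $q'$-coordinate of $\alpha_2(x)$ with respect to the representative $\alpha_2(u_m)$ is exactly $\alpha_1 q(x)$.

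For $(1)$, suppose $\alpha_2(x)=\alpha_2(x')$. Applying $f'$ gives $\alpha_3f(x)=\alpha_3f(x')$, so $f(x)=f(x')=m$ by injectivity of $\alpha_3$. Using the same representative $u_m$ for both elements, the identity above gives
\[
k'\alpha_1q(x)+\alpha_2(u_m)=k'\alpha_1q(x')+\alpha_2(u_m).
\]
Since $\alpha_2(u_m)$ is a representative, Lemma~\ref{lemma:rep_invertible}$(1)$ yields $\alpha_1q(x)=\alpha_1q(x')$. Injectivity of $\alpha_1$ then gives $q(x)=q(x')$, hence $x=x'$.

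For $(2)$, take $y\in X'$ and set $m'=f'(y)$. Surjectivity of $\alpha_3$ gives $m\in M$ with $\alpha_3(m)=m'$; pick $u_m\in B_m$. Since $\alpha_2(u_m)$ lies in $B'_{m'}$, we can write $y=k'(b)+\alpha_2(u_m)$ for some $b\in K'$. Surjectivity of $\alpha_1$ gives $a\in K$ with $\alpha_1(a)=b$, and then
\[
\alpha_2\big(k(a)+u_m\big)=k'\alpha_1(a)+\alpha_2(u_m)=y.
\]

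The final claim follows because in $\mathbf{Mon}$ a bijective homomorphism is an isomorphism.

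The main point to get right is the preservation of representatives, which is precisely what allows cancellation in $X'$ via Lemma~\ref{lemma:rep_invertible}$(1)$. Without it, $\alpha_2(u_m)$ need not be a representative and the cancellation fails. This is why the Short Five Lemma can hold for Schreier extensions even though it fails for general monoid extensions.
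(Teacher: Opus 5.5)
Your proof is correct and complete. The paper does not prove this statement itself; it imports it from \cite[Proposition 4.5]{P-II}. Your elementwise argument therefore serves as a self-contained replacement rather than an alternative to an in-paper proof.

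Its key step is cancelling $\alpha_2(u_m)$ via Lemma~\ref{lemma:rep_invertible}(1). This is legitimate precisely because $\alpha_2$ sends representatives to representatives. The paper uses the same device in the appendix, in the proofs of Propositions~\ref{prop:kernel_pairs} and~\ref{prop:char_complete}. As your argument shows, neither commutativity nor cancellativity of the kernels is needed, which matches the generality of the statement.

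One small point: your final conclusion gives an isomorphism $\alpha_2$ in $\mathbf{Mon}$, which is what the statement asserts. To obtain an isomorphism of Schreier extensions, you would also need $\alpha_2^{-1}$ to preserve representatives. The paper notes this separately in the proof of Proposition~\ref{prop:d_conservative}, and it follows by the same kind of argument as your parts (1) and (2).
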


We conclude this section with some concrete examples of Schreier and non-Schreier monoid extensions.
\begin{examples}
\label{ex:schreier_extensions}
\begin{enumerate}[label=\arabic*.]
	\item Every short exact sequence of groups
\[ \xymatrix{K \ar@{>->}[r]^-k &X \ar@{->>}[r]^-f &G} \]
is a Schreier extension of monoids in which all $x\in X$ are representatives, by Proposition~\ref{prop:K_group}. For any $u\in X$ and any $x\in f^{-1}(f(u)),$ one has indeed $x=k(a)+u$ for a unique $a\in K$ given by $k(a)=x-u.$
	\item For every monoid $M,$ the trivial extension
\[
\xymatrix{{M} \ar@{=}[r] &{M} \ar@{>>}[r] &0}
\]
is a Schreier extension whose representatives are the invertible elements of $M.$ Similarly, the extension
\[
\xymatrix{{0} \ar@{>->}[r] &M \ar@{=}[r] &{M}}
\]
is a Schreier extension, with $B_m=\{m\}$ for all $m\in M.$
	\item For all monoids $K$ and $M,$ the monoid extension
\[
\xymatrixcolsep{2.5pc}
\xymatrix{K \ar@{>->}[r]^-{\la id_K,0\ra} &{K\times M} \ar@{>>}[r]^-{\pi_M} &M}
\]
(where $\pi_M(a,m)=m$) is a Schreier extension, having as representatives the couples $(a,m)$ with $m\in M$ and $a\in U(K).$
	\item (Cf.\cite[Example 4.2]{P-II}) Let $m>1$ be a natural number and consider the monoid extension
\[ \xymatrix{{(\mathbb{N},+,0)} \ar@{>->}[r]^-{\cdot m} &{(\mathbb{N},+,0)} \ar@{->>}[r]^-f &{C_m(t),}} \]
where $C_m(t)$ is the multiplicative cyclic group of order $m$ with generator $t,$ $\cdot m$ is the ordinary multiplication by $m$ and $f(1)=t.$ It is a Schreier extension, with representatives $B_{1}=\{0\},B_{t}=\{1\},\dots,B_{t^{m-1}}=\{m-1\}.$
	\item The monoid extension
\[ \xymatrix{0 \ar@{>->}[r] &{\mathbb{N}\times\mathbb{N}} \ar@{>>}[r]^-{+} &{\mathbb{N}}} \]
is \emph{not} a Schreier extension by Proposition~\ref{prop:f_normal}, because the regular epimorphism $+$ given by the sum of natural numbers is not a normal epimorphism (otherwise it would be a cokernel of its kernel $0,$ and thus an isomorphism, but $+$ is not injective).
	\item Let $(M_2=\{0,1\},\cdot,1)$ be the commutative monoid with unit $1$ and $0\cdot0=0,$ and consider the monoid extension
\[ \xymatrix{ {(\mathbb{N}\setminus\{0\},\cdot,1)}\ar@{>->}[r]^-i &{(\mathbb{N},\cdot,1)} \ar@{->>}[r]^-f &{(M_2,\cdot,1)}} \]
where $i$ is the inclusion morphism, $f(n)=1$ if $n\neq 0$ and $f(0)=0.$ This extension is \emph{not} a Schreier extension, because there are infinitely many $a\in \mathbb{N}\setminus\{0\}$ such that $0=a\cdot 0.$ Even so, $f$ is a normal epimorphism in $\mathbf{Mon}$: indeed, if $g\colon(\mathbb{N},\cdot,1)\rightarrow (M,\cdot,1)$ is any monoid homomorphism such that $g(n)=1$ for all $n\in\mathbb{N}\setminus\{0\},$ $g$ factors through $f$ as $g=\overline{g}f$ (uniquely so, as $f$ is epimorphic) with $\overline{g}\colon M_2\rightarrow M$ given by $\overline{g}(1)=1$ and $\overline{g}(0)=g(0).$ Thus the converse of Proposition~\ref{prop:f_normal} does not hold.
\end{enumerate}
\end{examples}

\section{Action induced by a Schreier extension}
\label{sec:action}
It is well known that if $\xymatrix{K \ar@{>->}[r]^-k &X \ar@{->>}[r]^-f &G}$ is a short exact sequence of groups in which $K$ is abelian, then there results a (left) action of $G$ on $K$ given by
\begin{equation}
\label{eqn:gr_action}
G\times K\dashrightarrow K, \ (g,a)\mapsto a^\prime \text{ with } k(a^\prime)=s(g)+k(a)-s(g),
\end{equation}
where $s\colon G\dashrightarrow X$ is any set-theoretic section of $f$ (i.e. $s$ is any map of sets such that $fs=id_G$). We want to show that something similar happens for Schreier extensions of monoids. 

Most facts in this section are already known, and the statements can be found, for example, in \cite{P-II}, but since these properties will have a central importance in our main construction, we find it appropriate to provide the reader with explicit proofs.

Recall that a (left) action of a monoid $(M,\cdot,1)$ on a monoid $(A,+,0)$ is a monoid homomorphism $(M,\cdot,1)\rightarrow(\mathrm{End}(A),\circ,id_A),$ where $\mathrm{End}(A)$ denotes the set of monoid endomorphisms of $A;$ this is equivalent to having a map of sets $M\times A\dashrightarrow A,$ $(m,a)\mapsto m\ast a,$ satisfying the four axioms:
\begin{enumerate}[label=(\subscript{A}{{\arabic*}})]
\item $1\ast a=a$ for all $a\in A;$
\item $m\ast 0=0$ for all $m\in M;$
\item $m\ast (a+a^\prime)=m\ast a+m\ast a^\prime$ for all $m\in M$ and $a,a^\prime\in A;$
\item $(m^\prime\cdot m)\ast a=m^\prime\ast(m\ast a)$ for all $m,m^\prime\in M$ and $a\in A.$
\end{enumerate}

If $\xymatrix{K \ar@{>->}[r]^-k &X \ar@{->>}[r]^-f &M}$ is a Schreier extension of monoids with $K$ commutative, we claim that the map
\begin{equation}
\label{eqn:action}
M\times K\dashrightarrow K, \ (m,a)\mapsto m\ast a \ \text{ with } u_m+k(a)=k(m\ast a)+u_m
\end{equation}
is well defined, meaning that it does not depend on the choice of the representative $u_m$ of $m.$ Indeed, let $u,v\in B_m,$ $a\in K,$ and let $a_u,a_v\in K$ be such that $u+k(a)=k(a_u)+u$ and $v+k(a)=k(a_v)+v.$ As both $u$ and $v$ are representatives of $m,$ by Lemma~\ref{lemma:rep_invertible}$(2)$ there exists a unique $a^\prime\in U(K)$ such that $u=k(a^\prime)+v.$ We have
\begin{equation*}
\begin{split}
u+k(a)=k(a^\prime)+v+k(a)=k(a^\prime)+k(a_v)+v=k(a^\prime+a_v)+v, \\
\\
u+k(a)=k(a_u)+u=k(a_u)+k(a^\prime)+v=k(a_u+a^\prime)+v,
\end{split}
\end{equation*}
and by Lemma~\ref{lemma:rep_invertible}$(1),$ the commutativity of $K$ and the invertibility of $a^\prime,$ it follows that $a_u=a_v.$

Observe that~\eqref{eqn:action} can be expressed by
\begin{equation}
\label{eqn:action with q}
	m\ast a=q\big(u_m+k(a)\big),
\end{equation}
in the notation of the previous section.

The key point here is that~\eqref{eqn:action} \emph{almost} defines an action of $M$ on $K,$ as shown next.

\begin{proposition}
\label{prop:monoid action}
If $\xymatrix{{\mathbb{E}: K} \ar@{>->}[r]^-k &X \ar@{->>}[r]^-f &M}$ is a Schreier extension in which $K$ is commutative, the equation~\eqref{eqn:action} always satisfies the axioms $(A_1),$ $(A_2)$ and $(A_3).$ If, moreover, $K$ is cancellative, then $(A_4)$ is satisfied too, and in this case~\eqref{eqn:action} defines a monoid action of $M$ on $K.$
\end{proposition}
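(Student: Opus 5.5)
Each axiom is checked directly from the defining equation~\eqref{eqn:action}, $u_m+k(a)=k(m\ast a)+u_m$, using the uniqueness in Lemma~\ref{lemma:rep_invertible}(1): whenever $k(b)+u_m=k(b')+u_m$ for a representative $u_m$, we get $b=b'$. Because the action is independent of the choice of representative (shown just above), we may take whichever representative is convenient.

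For $(A_1)$ we choose $u_1=0\in B_1$. Then $0+k(a)=k(a)+0$, so $1\ast a=a$. For $(A_2)$, $u_m+k(0)=u_m=k(0)+u_m$ gives $m\ast 0=0$. For $(A_3)$ we compute
\[
u_m+k(a)+k(a')=k(m\ast a)+u_m+k(a')=k(m\ast a)+k(m\ast a')+u_m=k(m\ast a+m\ast a')+u_m ,
\]
while the left-hand side also equals $k(m\ast(a+a'))+u_m$ by definition. Lemma~\ref{lemma:rep_invertible}(1) then yields $(A_3)$. None of these steps needs cancellativity.

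For $(A_4)$ the difficulty is that $u_{m'}+u_m$ need not be a representative of $m'm$. It is only some element of $f^{-1}(m'm)$, so by~\eqref{eqn:x=kq(x)+u} we have $u_{m'}+u_m=k(c)+u_{m'm}$ with $c=q(u_{m'}+u_m)\in K$, possibly non-invertible. We expand $u_{m'}+u_m+k(a)$ in two ways. First,
\[
u_{m'}+u_m+k(a)=u_{m'}+k(m\ast a)+u_m=k\big(m'\ast(m\ast a)\big)+u_{m'}+u_m=k\big(m'\ast(m\ast a)+c\big)+u_{m'm}.
\]
Second,
\[
u_{m'}+u_m+k(a)=k(c)+u_{m'm}+k(a)=k\big(c+(m'm)\ast a\big)+u_{m'm}.
\]
By Lemma~\ref{lemma:rep_invertible}(1) and the commutativity of $K$, this gives $m'\ast(m\ast a)+c=(m'm)\ast a+c$.

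The main obstacle is exactly this point: $c$ cannot be removed by adding an inverse, since it need not be invertible. This is where cancellativity of $K$ enters, and it gives $(A_4)$ immediately. Combining $(A_1)$–$(A_4)$, the map $m\mapsto m\ast-$ is a monoid homomorphism $M\to\mathrm{End}(K)$, i.e. a monoid action of $M$ on $K$.
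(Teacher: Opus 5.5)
Your proof is correct and follows essentially the same route as the paper. You check $(A_1)$–$(A_3)$ directly via Lemma~\ref{lemma:rep_invertible}(1). For $(A_4)$ you write $u_{m'}+u_m=k(c)+u_{m'm}$, compare two expansions of $u_{m'}+u_m+k(a)$ to get $m'\ast(m\ast a)+c=(m'm)\ast a+c$, and cancel $c$, exactly as the paper does with its $l$ in place of your $c$.
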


\begin{proof}
\begin{enumerate}[label=(\subscript{A}{{\arabic*}})]
\item By definition, for $a\in K,$ $1\ast a$ is the only element satisfying $u_1+k(a)=k(1\ast a)+u_1;$ then by setting $u_1=0$ we get $1\ast a=a.$
\item For $m\in M,$ it follows from Lemma~\ref{lemma:rep_invertible}$(1)$ that $u_m=u_m+k(0)=k(m\ast0)+u_m$ entails $m\ast0=0.$
\item Given $m\in M$ and $a,a^\prime\in K,$ let $m\ast a=b$ and $m\ast a^\prime=b^\prime,$ so that $u_m+k(a)=k(b)+u_m$ and $u_m+k(a^\prime)=k(b^\prime)+u_m.$ Then
\begin{equation*}
\begin{split}
u_m+k(a+a^\prime)&=u_m+k(a)+k(a^\prime)\\
&=k(b)+u_m+k(a^\prime)\\
&=k(b)+k(b^\prime)+u_m\\
&=k(b+b^\prime)+u_m,
\end{split}
\end{equation*}
whence $m*(a+a^\ast)=b+b^\prime=m\ast a+m\ast a^\prime.$
\item Now suppose that $K$ is also cancellative and define $m\ast a=b$ (so that $u_m+k(a)=k(b)+u_m$), $m^\prime\ast b=b^\prime$ (so that $u_{m^\prime}+k(b)=k(b^\prime)+u_{m^\prime}$), and $(m^\prime\cdot m)\ast a=c$ (so that $u_{m^\prime\cdot m}+k(a)=k(c)+u_{m^\prime\cdot m}$). We want to prove that $c=b^\prime.$ Write $u_{m^\prime}+u_m=k(l)+u_{m^\prime\cdot m}$; then
\begin{equation*}
\begin{split}
k(l+b^\prime)+u_{m^\prime\cdot m}&=k(b^\prime)+k(l)+u_{m^\prime\cdot m}\\
&=k(b^\prime)+u_{m^\prime}+u_m\\
&=u_{m^\prime}+k(b)+u_m\\
&=u_{m^\prime}+u_m+k(a)\\
&=k(l)+u_{m^\prime\cdot m}+k(a)\\
&=k(l)+k(c)+u_{m^\prime\cdot m}\\
&=k(l+c)+u_{m^\prime\cdot m},
\end{split}
\end{equation*}
and the result follows from Lemma~\ref{lemma:rep_invertible}(1) and the cancellativity of $K.$
\end{enumerate}
\end{proof}

We stress the fact that the cancellativity of $K$ is only a sufficient condition for~\eqref{eqn:action} to be a monoid action: indeed, observe that if $k(K)$ is central in $X,$ then~\eqref{eqn:action} is just the trivial assignment $m\ast a=a$ for all $m\in M$ and $a\in K,$ which is always an action (the trivial one) even if $K$ is not cancellative. This is the case, for instance, of Example~\ref{ex:schreier_extensions}.2 (first example) when $M$ is commutative but not cancellative. Similarly, the Schreier extension of Example~\ref{ex:schreier_extensions}.3 always induces the trivial action when $K$ is commutative (even if $M$ is not commutative).

Given a Schreier extension $\E$ as in Proposition~\ref{prop:monoid action} with commutative and cancellative $K,$ we denote the monoid homomorphism corresponding to the above action by
\begin{equation}
\label{eqn:Schreier ext action eta}
	\eta\colon M \to \mathrm{End}(K), \ m \mapsto \eta(m)\colon K \to K,
\end{equation}
where $\eta(m)(a)=m\ast a,$ as in~\eqref{eqn:action}.

Observe, moreover, that when $\xymatrix{K \ar@{>->}[r]^-k &X \ar@{->>}[r]^-f &G}$ is a short exact sequence of groups with an abelian kernel (which is always a Schreier extension of monoids, with $K$ cancellative), the induced action~\eqref{eqn:action} does coincide with the group action~\eqref{eqn:gr_action}.

Since the request for $K$ to be commutative and cancellative will turn out repeatedly in the next sections, we set for the sake of brevity the following definition.
\begin{definition}
A \emph{cc-Schreier extension} is a Schreier extension $\xymatrixcolsep{1.6pc}\xymatrix{K \ar@{>->}[r]^-k &X \ar@{->>}[r]^-f &M}$ in which the kernel $K$ is commutative and cancellative.
\end{definition}

We conclude this section by pointing out a property of the map~\eqref{eqn:action} which, despite its simplicity, will be very useful in carrying out explicit computations.

\begin{lemma}
\label{lemma:patrick}
If $\xymatrix{K \ar@{>->}[r]^-k &X \ar@{->>}[r]^-f &M}$ is a Schreier extension with $K$ commutative, then
\begin{equation}
\label{eqn:Patrick}
x+k(a)=k\big(f(x)\ast a\big)+x
\end{equation}
for all $x\in X$ and $a\in K.$
\end{lemma}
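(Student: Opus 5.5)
The idea is to decompose $x$ through a representative and push $k(a)$ past it using the defining equation of the action. Set $m=f(x)$ and choose a representative $u_m\in B_m$, which exists because the extension is Schreier. By~\eqref{eqn:x=kq(x)+u} we can write $x=kq(x)+u_m$ with $q(x)\in K$. Then
\[
x+k(a)=kq(x)+u_m+k(a)=kq(x)+k(m\ast a)+u_m,
\]
where the second equality is the defining relation~\eqref{eqn:action}. This relation is available because $K$ is commutative, so the map is well defined regardless of the chosen representative.

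Next we reorder inside $K$. Since $k$ is a homomorphism and $K$ is commutative, $kq(x)+k(m\ast a)=k\big(q(x)+m\ast a\big)=k\big(m\ast a+q(x)\big)=k(m\ast a)+kq(x)$. Substituting, we get
\[
x+k(a)=k(m\ast a)+kq(x)+u_m=k\big(f(x)\ast a\big)+x,
\]
which is~\eqref{eqn:Patrick}.

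There is essentially no obstacle here. The only point to note is that commutativity of $K$ is used twice: once for the well-definedness of $\ast$ (already established after~\eqref{eqn:action}), and once for swapping $q(x)$ and $m\ast a$. Cancellativity is never needed, which is consistent with the hypotheses of the lemma.
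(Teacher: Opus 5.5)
Your proposal is correct and follows the paper's proof exactly: decompose $x=kq(x)+u_{f(x)}$, use the defining relation~\eqref{eqn:action} to move $k(a)$ past $u_{f(x)}$, then swap $kq(x)$ and $k\big(f(x)\ast a\big)$ using the commutativity of $K$. Your observation that cancellativity is never used is also correct.
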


\begin{proof}
We have $x=kq(x)+u_{f(x)},$ from~\eqref{eqn:x=kq(x)+u}. Then $x+k(a)=kq(x)+u_{f(x)}+k(a)\stackrel{\eqref{eqn:action}}{=}kq(x)+k\big(f(x)\ast a\big)+u_{f(x)}=k\big(f(x)\ast a\big)+kq(x)+u_{f(x)}=k\big(f(x)\ast a\big)+x,$ using the commutativity of $K.$
\end{proof}

\section{Interlude on Schreier points and $S$-reflexive relations}
In this section we collect some of the main results concerning the so-called Schreier \emph{points}, which shall be needed in the course of the construction of our direction functor.

The following notion was introduced in \cite{mms}, Definition $2.6$:
\begin{definition}
\label{def:schreier_point}
A \emph{Schreier split extension} of monoids, or \emph{Schreier point}, is a split extension
\begin{equation}
\label{eqn:schreier_point}
\xymatrix{{K} \ar@{>->}[r]^-k &B \ar@<.5ex>@{>>}[r]^-f &M \ar@<.5ex>[l]^-s}
\end{equation}
in $\mathbf{Mon}$ (where the monoid homomorphism $s$ is a fixed section of $f$ and $(K,k)$ is a kernel of $f$) for which there exists a unique map of sets $q:B\dashrightarrow K$ (called the \emph{Schreier retraction}) satisfying
\begin{equation}
\label{eqn:equality for Schreier point}
	b=kq(b)+sf(b),
\end{equation}
for every $b\in B.$
\end{definition}

An extensive study of Schreier split extensions of monoids is carried out in \cite{schreier_book}.

\begin{remark}
\label{rmk:Shreier exts vs Schreier points}
Observe that any Schreier split extension of monoids~\eqref{eqn:schreier_point} determines a Schreier extension $\xymatrix{{K} \ar@{>->}[r]^-k &B \ar@{>>}[r]^-f &M}$ in the sense of Definition~\ref{def:schreier}, with $s(m)\in B_m$ for all $m\in M.$ Beware that $B_m$ need not be reduced to the sole element $s(m),$ as the case of split extensions of groups shows. Indeed, by Proposition~\ref{prop:K_group} we have $B_1=f^{-1}(1)=k(K)\neq \{0\}$ when $f$ is not monomorphic. Moreover, the unique element $q(b)\in K$ satisfying~\eqref{eqn:equality for Schreier point} coincides with $q_{f(b),sf(b)}(b)$ using the notation of Section~\ref{section:Definition and first properties of Schreier extensions of monoids}. Conversely, if~\eqref{eqn:ses} is a Schreier extension in the sense of Definition~\ref{def:schreier}, which is split by a section $s\colon M\to X$ such that each $s(m)\in B_m,$ then $\xymatrix{{K} \ar@{>->}[r]^-k &X \ar@<.5ex>@{>>}[r]^-f &M \ar@<.5ex>[l]^-s}$ is a Schreier split extension. The Schreier retraction is given by the map of sets $q\colon X\dashrightarrow K,$ where $q(x)=q_{f(x),sf(x)}(x)$ for every $x\in X,$ as defined after Definition~\ref{def:schreier}.
\end{remark}

To avoid any confusion between the two notions, we shall stick to the name \emph{Schreier points}, from now on, to refer to~\eqref{eqn:schreier_point}. The relevance of Schreier points is that they correspond to monoid actions in the same way as split extensions of groups are equivalent to group actions, via the semidirect product construction (see~\cite[Proposition 5.2.2]{schreier_book}, and also Section 5.2 of~\cite{borceux-bourn} for the classical case of groups).

\begin{proposition}[{\cite[Theorem 5.1.2]{schreier_book}}]
\label{prop:actions}
Given a Schreier point $\xymatrix{{K} \ar@{>->}[r]_-k & B \ar@/_1pc/@{-->}[l]_-{q} \ar@<.5ex>@{>>}[r]^-f &M \ar@<.5ex>[l]^-s}$ with Schreier retraction $q,$ the induced monoid action of $M$ on $K$ is given by
\begin{equation}
\label{eqn:split_action}
M\times K\dashrightarrow K, \ (m,a)\mapsto m\bullet a=q\big(s(m)+k(a)\big).
\end{equation}
Note that~\eqref{eqn:split_action} is precisely~\eqref{eqn:action with q}, since each $s(m)$ is a representative of $m.$
\end{proposition}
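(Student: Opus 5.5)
We have to show that for a Schreier point with Schreier retraction $q$, the assignment $m\bullet a=q\big(s(m)+k(a)\big)$ is a monoid action of $M$ on $K$. This is the action corresponding to the point, in the sense that it recovers $B$ as the semidirect product.

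The first step is the well-definedness of $q(s(m)+k(a))$. Since $f(s(m)+k(a))=m$, uniqueness in the Schreier point gives $s(m)+k(a)=k(m\bullet a)+s(m)$.

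Next I check the axioms $(A_1)$--$(A_4)$ directly, using uniqueness of the Schreier retraction in the form: if $k(c)+s(m)=k(c')+s(m)$ then $c=c'$. This cancellation follows because $k(c)+s(m)$ has Schreier decomposition with $q$-component $c$.
\begin{itemize}
\item $(A_1)$: since $s(1)=0$, we get $1\bullet a=qk(a)=a$.
\item $(A_2)$: $s(m)+k(0)=s(m)=k(0)+s(m)$, so $m\bullet 0=0$.
\item $(A_3)$: compute $s(m)+k(a)+k(a')=k(m\bullet a)+s(m)+k(a')=k(m\bullet a+m\bullet a')+s(m)$, and conclude by uniqueness.
\item $(A_4)$: here the point is that $s$ is a homomorphism, which removes the need for cancellativity or commutativity used in Proposition~\ref{prop:monoid action}. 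We have
\[
s(m'm)+k(a)=s(m')+s(m)+k(a)=s(m')+k(m\bullet a)+s(m)=k\big(m'\bullet(m\bullet a)\big)+s(m')+s(m),
\]
and the right-hand side equals $k\big(m'\bullet(m\bullet a)\big)+s(m'm)$. Uniqueness then gives $(m'm)\bullet a=m'\bullet(m\bullet a)$.
\end{itemize}

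To confirm that this is \emph{the} induced action, I would show that $K\rtimes M\to B$, $(a,m)\mapsto k(a)+s(m)$, is a bijective homomorphism. It is bijective by the Schreier condition and uniqueness of $q$. It is a homomorphism by the computation
\[
k(a)+s(m)+k(a')+s(m')=k(a+m\bullet a')+s(mm'),
\]
and it is compatible with $k$, $f$ and $s$.

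Finally, the note in the statement follows from Remark~\ref{rmk:Shreier exts vs Schreier points}: each $s(m)\in B_m$ and $q=q_{f(b),sf(b)}$, so this agrees with \eqref{eqn:action with q}.

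The main obstacle is only the careful use of uniqueness of $q$ as a cancellation law. This is cited from the book \cite{schreier_book}.
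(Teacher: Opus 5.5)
Your argument is correct. The paper gives no proof of this statement; it quotes it from \cite[Theorem 5.1.2]{schreier_book}. So your direct verification is a self-contained substitute rather than a reconstruction of an argument in the paper.

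It relates to material the paper does prove as follows. Your check of $(A_1)$--$(A_4)$ is the computation in the proof of Proposition~\ref{prop:monoid action}, specialised to the representatives $u_m=s(m)$.
\begin{itemize}
\item In that proof, the correction term $l$ with $u_{m'}+u_m=k(l)+u_{m'\cdot m}$ is what forces commutativity and cancellativity of $K$ in $(A_4)$. Representative-independence of \eqref{eqn:action} also needs commutativity.
\item Since $s$ is a homomorphism, you have $l=0$, and the representatives are fixed once and for all. This is why your version needs neither hypothesis and gives an action by endomorphisms on an arbitrary, even non-commutative, $K$.
\end{itemize}

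For the identification with the semidirect product, the paper (Section~\ref{sec:direction}) appeals to the Split Short Five Lemma for Schreier points \cite[Corollary 2.3.8]{schreier_book} to conclude that $\varphi(a,m)=k(a)+s(m)$ is an isomorphism. You instead check bijectivity directly: surjectivity from $b=kq(b)+sf(b)$, and injectivity by applying $f$ and then using uniqueness of the decomposition. This is exactly the instance of that lemma needed here, and it keeps the argument elementary.

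One small caveat concerns your final remark. Formula \eqref{eqn:action with q} is only defined independently of the chosen representative when $K$ is commutative, so the identification of the two formulas should be read in that setting.
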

We denote the monoid homomorphism corresponding to this action by
\begin{equation}
\label{eqn:Schreier point action sigma}
	\sigma\colon M \to \mathrm{End}(K), \ m \mapsto \sigma(m)\colon K \to K,
\end{equation}
where $\sigma(m)(a)=m\bullet a,$ as in~\eqref{eqn:split_action}.

Consider a reflexive graph $\xymatrix{{\mathcal{G}: \ R} \ar@<.9ex>[r]^-{\rho_1} \ar@<-.9ex>[r]_-{\rho_2} &{X} \ar[l]|-{\delta}}$ in $\mathbf{Mon},$ so that $\rho_1\delta=\rho_2\delta=id_X.$

\begin{definition}[{\cite[Definition 3.0.10]{schreier_book}}]
\label{def:S-graph}
We say that $\mathcal{G}$ is a \emph{Schreier reflexive graph}, or an \emph{$S$-reflexive graph}, if
\[
\xymatrix{{K_1} \ar@{>->}[r]^-{k_1} &R \ar@<.5ex>@{>>}[r]^-{\rho_1} &X \ar@<.5ex>[l]^-\delta}
\]
is a Schreier point, where $(K_1,k_1)$ is a kernel of $\rho_1.$
\end{definition}
Since any two kernels of $\rho_1$ are linked by a unique isomorphism of monoids, this does not depend on the choice of $(K_1,k_1).$ Of course, if $\mathcal{G}$ is a (reflexive) \emph{relation} on $X$ (meaning that $(\rho_1,\rho_2)$ are jointly monomorphic), we have a corresponding notion of an $S$-reflexive relation. We shall make use of the following result:
\begin{proposition}[{\cite[Proposition 3.1.5]{schreier_book}}]
\label{prop:3.1.5}
Any $S$-reflexive relation $\mathcal{G}$ is transitive, and it is symmetric if and only if $K_1$ is a group.
\end{proposition}

A relevant aspect here is that, in analogy with the case of Mal'tsev categories, there is a good notion of \emph{centrality} of ($S$-)equivalence relations. The study of centrality has a long story that goes back to Smith \cite{smith} for Mal'tsev varieties and to Carboni, Pedicchio, Pirovano \cite{carboni} and Pedicchio \cite{pedicchio} for the general context of Mal'tsev categories. Eventually, the notion of \emph{connector} was introduced in \cite{normality} and proven to be equivalent to the Smith-Pedicchio definition of centrality, given in terms of double centralizing relations (see also \cite{centrality}, Lemma $2.1$). Connectors can be defined very generally in any finitely complete category $\mathcal{C},$ but they behave particularly well in the context of Mal'tsev categories, where the uniqueness of the (eventual) connector between two equivalence relations $\mathcal{R}$ and $\mathcal{R}^\prime$ can be established.

Recall that if $\xymatrix{{\mathcal{R}: R} \ar@<.9ex>[r]^-{\rho_1} \ar@<-.9ex>[r]_-{\rho_2} &{X} \ar[l]|-{\delta}}$ and $\xymatrix{{\mathcal{R}^\prime: R^\prime} \ar@<.9ex>[r]^-{\rho_1^\prime} \ar@<-.9ex>[r]_-{\rho_2^\prime} &{X} \ar[l]|-{\delta^\prime}}$ are equivalence relations on an object $X$ in a finitely complete category $\mathcal{C},$ a connector between the two is a morphism ${p\colon R\times_XR^{\prime}\longrightarrow X}$ in $\mathcal{C}$ satisfying certain equational axioms (see \cite[Definition 2.2]{normality}), where $R\times_XR^\prime$ denotes a pullback
\[
\xymatrix{
{R\times_XR^\prime} \pullback \ar[d] \ar[r] &{R^\prime} \ar[d]^-{\rho_1^\prime}\\
R \ar[r]_-{\rho_2} &{X.}
}
\]
When $\mathcal{C}$ is a Mal'tsev category - meaning that any reflexive relation in $\mathcal{C}$ is an equivalence relation - these axioms come down to the two set-theoretical equations $p(a,a,z)=z$ and $p(b,y,y)=b$ for all $a,b,y,z\in X$ such that $a\mathcal{R}^\prime z$ and $b\mathcal{R}y.$ Moreover, if a connector between $\mathcal{R}$ and $\mathcal{R}^\prime$ exists, it is necessarily unique (\cite[Proposition 4.1]{normality}). Observe that, in the Mal'tsev context, a connector is simply a restricted Mal'tsev operation, where the restriction is to the triples $(x,y,z)$ such that $x\mathcal{R}y\mathcal{R}^\prime z.$

Now, the category of monoids is certainly not a Mal'tsev category, and even in the case of $S$-reflexive relations only transitivity, but not symmetry, is automatically guaranteed (Proposition~\ref{prop:3.1.5}). Nevertheless, it is a consequence of \cite[Theorem 2.4.2]{schreier_book} that when $\mathcal{R}$ and $\mathcal{R}^\prime$ are reflexive relations in $\mathbf{Mon}$ and \emph{at least one of the two is a Schreier reflexive relation}, the same reduced definition of a connector can be used to define centrality:
\begin{definition}[{\cite[Definition 4.3.1]{schreier_book}}]
\label{def:centralizing}
If $\xymatrix{{\mathcal{R}: R} \ar@<.9ex>[r]^-{\rho_1} \ar@<-.9ex>[r]_-{\rho_2} &{X} \ar[l]|-{\delta}}$ and $\xymatrix{{\mathcal{R}^\prime: R^\prime} \ar@<.9ex>[r]^-{\rho_1^\prime} \ar@<-.9ex>[r]_-{\rho_2^\prime} &{X} \ar[l]|-{\delta^\prime}}$ are reflexive relations in $\mathbf{Mon}$ and $\mathcal{R}^\prime$ is a Schreier reflexive relation, we say that $\mathcal{R}$ and $\mathcal{R}^\prime$ \emph{centralise each other} when there exists a monoid homomorphism $p\colon R\times_XR^\prime\longrightarrow X$ which satisfies $p(a,a,z)=z$ and $p(b,y,y)=b$ for all $a,b,y,z\in X$ such that $a\mathcal{R}^\prime z$ and $b\mathcal{R}y.$ The morphism $p$ is in this case necessarily unique, and it is called the \emph{connector} of $\mathcal{R}$ and $\mathcal{R}^\prime.$
\end{definition}

The following characterisation will be useful:
\begin{proposition}[{\cite[Proposition 4.3.3]{schreier_book}}]
\label{prop:char_centralizing}
Consider a reflexive relation $\mathcal{R}$ and an $S$-reflexive relation $\mathcal{R}^\prime$ as above, where
\[
\xymatrix{{K^\prime} \ar@{>->}[r]_-{k^\prime} &{R^\prime} \ar@/_1pc/@{-->}[l]_-{q^\prime} \ar@<.5ex>@{>>}[r]^-{\rho_1^\prime} &X \ar@<.5ex>[l]^-{\delta^\prime}}
\]
is a Schreier point with Schreier retraction $q^\prime.$ Then, considering $K^\prime\subseteq X$ and $k^\prime(t)=(0,t)$ for $t\in K^\prime,$ $\mathcal{R}$ and $\mathcal{R}^\prime$ centralise each other if and only if for every $t\in K^\prime$ and every $(x,y)\in R$ the equality $q^\prime(y,y + t) + x=x + t$ holds in $X;$ in this case, the connector is given by $p\big(x,y,z\big)=q^\prime(y,z) + x$ (for all $(x,y,z)\in X\times X\times X$ such that $x\mathcal{R}y\mathcal{R}^\prime z$).
\end{proposition}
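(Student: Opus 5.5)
We work in the setting of Definition~\ref{def:centralizing}: $\mathcal{R}$ is a reflexive relation, $\mathcal{R}^\prime$ is an $S$-reflexive relation, and we identify $K^\prime$ with $\{t\in X : (0,t)\in R^\prime\}$. The plan is to prove the two directions of the equivalence separately, and to obtain the formula for $p$ along the way. The basic tool is the Schreier decomposition of $R^\prime$. Since $\rho_1^\prime(y,z)=y$ and $\delta^\prime(y)=(y,y)$, \eqref{eqn:equality for Schreier point} reads
\[
(y,z)=(0,q^\prime(y,z))+(y,y)=(y,\,q^\prime(y,z)+y)
\]
for every $(y,z)\in R^\prime$. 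So $z=q^\prime(y,z)+y$, and $q^\prime(y,z)$ is the unique $t\in K^\prime$ with this property.

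\emph{Necessity.} Suppose $p$ is a connector, and take $t\in K^\prime$ and $(x,y)\in R$.
\begin{enumerate}
\item[(i)] The triple $(x,y,y+t)$ lies in $R\times_XR^\prime$. Indeed $(y,y+t)=(y,y)+(0,t)\in R^\prime$, because $R^\prime$ is a submonoid containing the diagonal and $(0,t)$.
\item[(ii)] Decompose this triple as a sum in $R\times_XR^\prime$:
\[
(x,y,y+t)=\big(0,0,q^\prime(y,y+t)\big)+(x,y,y).
\]
This uses $y+t=q^\prime(y,y+t)+y$ together with the reflexivity of $R$.
\item[(iii)] Since $p$ is a homomorphism and satisfies the two connector axioms, $p(0,0,c)=c$ and $p(x,y,y)=x$. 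Hence $p(x,y,y+t)=q^\prime(y,y+t)+x$.
\item[(iv)] Decompose the same triple the other way:
\[
(x,y,y+t)=(x,y,y)+(0,0,t).
\]
This gives $p(x,y,y+t)=x+t$.
\end{enumerate}
Comparing (iii) and (iv) yields $q^\prime(y,y+t)+x=x+t$.

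The computation in (ii) and (iii) works for an arbitrary triple $(x,y,z)$ with $x\mathcal{R}y\mathcal{R}^\prime z$. Writing $(x,y,z)=(0,0,q^\prime(y,z))+(x,y,y)$ shows that any connector must equal $q^\prime(y,z)+x$. This gives both uniqueness and the stated formula.

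\emph{Sufficiency.} Assume the identity holds and define $p(x,y,z)=q^\prime(y,z)+x$. The axioms are immediate. First, $p(x,y,y)=q^\prime(y,y)+x=x$, because $q^\prime\delta^\prime=0$ by Lemma~\ref{prop:2.1.5}(2). Second, $p(a,a,z)=q^\prime(a,z)+a=z$ by the decomposition above. The unit is also preserved, since $p(0,0,0)=0$.

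The main obstacle is additivity of $p$. For two triples $(x,y,z)$ and $(x^\prime,y^\prime,z^\prime)$ we must show
\[
q^\prime(y+y^\prime,z+z^\prime)+x+x^\prime=q^\prime(y,z)+x+q^\prime(y^\prime,z^\prime)+x^\prime .
\]
Set $t=q^\prime(y,z)$ and $t^\prime=q^\prime(y^\prime,z^\prime)$. Then $z+z^\prime=t+y+t^\prime+y^\prime$. I would proceed in three steps.
\begin{enumerate}
\item[(a)] Apply the hypothesis to the pair $(y,y)\in R$ and to $t^\prime$. This gives $q^\prime(y,y+t^\prime)+y=y+t^\prime$, so we can move $t^\prime$ to the left past $y$. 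Writing $t^{\prime\prime}=q^\prime(y,y+t^\prime)$, this yields $z+z^\prime=t+t^{\prime\prime}+y+y^\prime$.
\item[(b)] Show that $t+t^{\prime\prime}\in K^\prime$, using that $K^\prime$ is a submonoid. Then the uniqueness of the Schreier retraction gives $q^\prime(y+y^\prime,z+z^\prime)=t+t^{\prime\prime}$.
\item[(c)] The required identity becomes
\[
t+t^{\prime\prime}+x+x^\prime=t+x+t^\prime+x^\prime .
\]
Apply the hypothesis to $(x,y)\in R$ and $t^\prime$ to get $x+t^\prime=q^\prime(y,y+t^\prime)+x=t^{\prime\prime}+x$, which is exactly what is needed.
\end{enumerate}
The delicate points are that the hypothesis must be used both for $(y,y)$ and for $(x,y)$, and that $q^\prime$ must be checked to take values where the uniqueness clause of Definition~\ref{def:schreier_point} applies.
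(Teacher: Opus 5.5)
Your proof is correct and complete. The paper gives no proof of its own here, since the statement is imported from \cite[Proposition 4.3.3]{schreier_book}, so your direct elementwise verification is a legitimate self-contained substitute.

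The same computation also has a more conceptual form. The projection $R\times_XR^\prime\to R$, $(x,y,z)\mapsto(x,y)$, is a pullback of the Schreier point $(\rho_1^\prime,\delta^\prime)$ along $\rho_2$. It is therefore itself a Schreier point, with kernel $t\mapsto(0,0,t)$, section $(x,y)\mapsto(x,y,y)$ and retraction $(x,y,z)\mapsto q^\prime(y,z)$. A connector is exactly a homomorphism out of this Schreier point whose restrictions to the kernel and the section are prescribed, namely $t\mapsto t$ and $(x,y)\mapsto x$. Such a map is forced to be $q^\prime(y,z)+x$. It exists precisely when the usual semidirect-product compatibility $h(m)+g(a)=g\big(q(s(m)+k(a))\big)+h(m)$ holds, which here reads $x+t=q^\prime(y,y+t)+x$. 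Your steps (ii)--(iv) are the uniqueness half of this, and (a)--(c) are the existence half done by hand. The conceptual version shows where the condition comes from; yours keeps everything at the level of elements.

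There are two minor points. First, step (a) does not need the hypothesis. The equality $y+t^\prime=q^\prime(y,y+t^\prime)+y$ is simply the Schreier decomposition of $(y,y+t^\prime)\in R^\prime$, and the condition holds automatically at diagonal pairs $(y,y)$. In fact (a) and (b) together are just Lemma~\ref{lemma:original}(4) applied to $q^\prime$. So the hypothesis is genuinely used only at $(x,y)$ in step (c), and your closing remark overstates its role. Second, $q^\prime(y,y)=0$ follows most directly from Lemma~\ref{lemma:original}(2), i.e.\ $qs=0$ for Schreier points, rather than from Lemma~\ref{prop:2.1.5}(2), although either yields it.
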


These Mal'tsev aspects of Schreier internal structures have been explored at a categorical level in \cite{BMMS_S-protomod}.

\section{The Chasles relation of a cc-Schreier extension}
\label{sec:chasles}
Now we resume the study of extensions and fix a Schreier extension
\[ \xymatrix{{\mathbb{E}: K} \ar@{>->}[r]^-k &X \ar@{->>}[r]^-f &{M,}} \]
with $K$ commutative. 

Define a relation
\begin{equation}
\label{eqn:R_E}
 \xymatrix{\rel{E}: \ \R{E} \ar@<.9ex>[r]^-{r_1} \ar@<-.9ex>[r]_-{r_2} &{X} \ar[l]|-{s_0}}
\end{equation}
on $X$ in $\mathbf{Mon}$ by $(x,z)\in\R{E}$ if and only if $z=k(a)+x$ for some (not necessarily unique) $a\in K,$ with $r_1(x,z)=x,$ $r_2(x,z)=z$ and $s_0(x)=(x,x).$ Using~\eqref{eqn:Patrick}, we see that this is an internal relation in $\mathbf{Mon},$ for we can write
\begin{equation*}
\begin{split}
\big(x,k(a)+x\big)+\big(y,k(b)+y\big)&=\big(x+y,k(a)+x+k(b)+y\big)\\
&=\Big(x+y,k(a)+k\big(f(x)\ast b\big)+x+y\Big)\\
&=\Big(x+y,k\big(a+f(x)\ast b\big)+x+y\Big).
\end{split}
\end{equation*}
Moreover, $\rel{E}$ is clearly reflexive and transitive. Since $f\big(k(a)+x\big)=f(x)$ for all $x\in X$ and all $a\in K,$ there is an induced morphism $j$ in the kernel pair of $f$
\[
\xymatrix{
{\R{E}} \ar@/^1pc/[rrd]^-{r_2} \ar@/_1pc/[rdd]_-{r_1} \ar@{.>}[rd]|{j} &\ &\ \\
&{\mathrm{Eq}(f)} \pullback\ar[d]_-{f_1} \ar[r]^-{f_2}  &X \ar[d]^-f \\
&X \ar[r]_-f &{M};
}
\]
$j$ is a monomorphism by the equality $\la f_1,f_2\ra j=\la r_1,r_2\ra$ and it can be realised as an inclusion map. Thus $\R{E}$ is a subobject of $\mathrm{Eq}(f),$ and it follows from the next proposition that the kernel pair relation of $f,$ denoted by $\mathcal{E}q(f),$ is the congruence generated by the relation $\rel{E}$ (i.e., the smallest equivalence relation on $X$ in $\mathbf{Mon}$ containing $\rel{E}$):

\begin{proposition}
\label{prop:f_coeq}
Given a Schreier extension $\mathbb{E}$ with $K$ commutative and the relation $\rel{E}$~\eqref{eqn:R_E}, the regular epimorphism $f$ is a coequaliser of $(r_1,r_2).$
\end{proposition}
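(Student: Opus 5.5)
We verify the universal property of a coequaliser directly, in the same spirit as the proof of Proposition~\ref{prop:f_normal}. Since $f(k(a)+x)=f(x)$ for all $a\in K$ and $x\in X$, we already have $fr_1=fr_2$. It remains to show that if $g\colon X\to Z$ is a monoid homomorphism with $gr_1=gr_2$, then $g$ factors uniquely through $f$. Uniqueness is immediate, since $f$ is a (surjective) epimorphism.

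For existence, the hypothesis $gr_1=gr_2$ says $g(k(a)+x)=g(x)$ for every $a\in K$ and $x\in X$, because $(x,k(a)+x)\in\R{E}$. Taking $x=0$ gives $gk(a)=g(0)=1$, so $gk$ is trivial. By Proposition~\ref{prop:f_normal}, $f$ is a cokernel of $k$, and hence $g=\varphi f$ for a unique monoid homomorphism $\varphi\colon M\to Z$.

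Alternatively, the argument can be made self-contained. Any $x\in f^{-1}(m)$ can be written $x=kq(x)+u_m$ by~\eqref{eqn:x=kq(x)+u}, so $g(x)=g(u_m)$ and $g$ is constant on fibres. One then defines $\varphi(m)=g(u_m)$, and multiplicativity follows exactly as in the proof of Proposition~\ref{prop:f_normal}.

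There is no genuine obstacle: the only point needing care is that the Schreier condition, through Proposition~\ref{prop:f_normal}, lets the merely fibrewise relation $\R{E}$ (which is not symmetric in general) already force $g$ to be constant on the fibres of $f$. Commutativity of $K$ is used only to ensure, via~\eqref{eqn:Patrick}, that $\R{E}$ is a submonoid, i.e.\ that $(r_1,r_2)$ is a pair of morphisms in $\mathbf{Mon}$.
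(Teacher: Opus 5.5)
Your proof is correct, but your main route differs from the paper's. The paper shows that any $h$ with $hr_1=hr_2$ also coequalises the kernel pair of $f$. For $(x,y)\in\mathrm{Eq}(f)$ with $f(x)=f(y)=m$, the decompositions $x=kq(x)+u_m$ and $y=kq(y)+u_m$ give $h(x)=h(u_m)=h(y)$. The paper then concludes because $f$, being surjective, is a coequaliser of its kernel pair in $\mathbf{Mon}$.

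You instead specialise to $x=0$ to get $gk$ trivial, and then invoke Proposition~\ref{prop:f_normal}. Conversely, any $g$ with $gk$ trivial satisfies $g(k(a)+x)=g(k(a))\,g(x)=g(x)$. So your argument shows that coequalising $(r_1,r_2)$ and annihilating $k$ are the same condition, and the statement becomes a reformulation of the normality of $f$.

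The Schreier condition enters both routes at the same point, the decomposition $x=kq(x)+u_m$; in your version it is hidden inside Proposition~\ref{prop:f_normal}. The paper's version avoids that citation and works directly with $\mathrm{Eq}(f)$. This fits the surrounding discussion that $\mathcal{E}q(f)$ is the congruence generated by $\mathcal{R}_{\mathbb{E}}$.

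Your self-contained alternative is essentially the paper's key step. You finish it by hand, as in Proposition~\ref{prop:f_normal}, rather than through the kernel-pair property of regular epimorphisms.
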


\begin{proof}
We have already argued that $fr_1=fr_2.$ Suppose that $h:X\rightarrow Z$ is a monoid homomorphism satisfying $hr_1=hr_2.$ Given $(x,y)\in \mathrm{Eq}(f),$ let $f(x)=f(y)=m.$ Then, $h(x)\stackrel{\eqref{eqn:x=kq(x)+u}}{=}h(kq(x)+u_m)=h(u_m),$ since $hr_1=hr_2$; similarly, $h(y)=h(u_m).$ We get $hf_1=hf_2,$ and $h$ factors uniquely through $f$ because $f$ is a coequaliser of its kernel pair.
\end{proof}

To give a more precise account of the situation, denote by $k_2=\la k,0\ra\colon K\rightarrowtail\mathrm{Eq}(f)$ the kernel of the second projection $f_2$ of the kernel pair $\mathrm{Eq}(f).$ Then we have:

\begin{proposition}
Given a Schreier extension $\mathbb{E}$ with $K$ commutative and the relation $\rel{E}$~\eqref{eqn:R_E}, the morphisms $j$ and $k_2$ are jointly extremal epimorphic.
\end{proposition}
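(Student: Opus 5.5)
To show that $j$ and $k_2$ are jointly extremal epimorphic, I will show that they are jointly strongly epimorphic in the elementary sense. The submonoid of $\mathrm{Eq}(f)$ generated by $j(\R{E})\cup k_2(K)$ is the whole of $\mathrm{Eq}(f)$. This suffices because in $\mathbf{Mon}$ a monomorphism $m\colon S\rightarrowtail \mathrm{Eq}(f)$ through which both $j$ and $k_2$ factor contains the submonoid generated by their images. If that submonoid is everything, $m$ is surjective, hence an isomorphism. So it is enough to decompose an arbitrary pair $(x,y)\in\mathrm{Eq}(f)$ as a sum of elements of the form $(x',k(a)+x')$ and $(k(b),0)$.

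Fix $(x,y)$ with $f(x)=f(y)=m$ and choose a representative $u_m\in B_m$. By~\eqref{eqn:x=kq(x)+u} we can write $x=kq(x)+u_m$ and $y=kq(y)+u_m$. The idea is that the first coordinate carries the $K$-part of $x$, which $k_2$ supplies, while the passage from $u_m$ to $y$ is an $\rel{E}$-step. I propose the decomposition
\[
(x,y)=\big(kq(x),0\big)+\big(u_m,kq(y)+u_m\big)=k_2\big(q(x)\big)+j\big(u_m,y\big).
\]
Checking it is immediate. The first coordinate is $kq(x)+u_m=x$. The second is $0+kq(y)+u_m=y$. Moreover $(u_m,y)\in\R{E}$ by definition, since $y=k(q(y))+u_m$.

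This decomposition is essentially the whole argument. The only subtle point is the order of summands, since $X$ is not commutative: the $k_2$-term must come first. No use of the action or of Lemma~\ref{lemma:patrick} is needed. So the main obstacle is just the formal step identifying extremal epimorphicity with generation. I would handle that step as in the first paragraph, noting that a jointly epimorphic family factoring through a mono forces that mono to be surjective, hence invertible.
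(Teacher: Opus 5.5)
Your decomposition $(x,y)=k_2\big(q(x)\big)+j\big(u_m,kq(y)+u_m\big)$ is exactly the one the paper uses, and your first paragraph correctly supplies the formal step the paper leaves implicit: the images of $j$ and $k_2$ generate $\mathrm{Eq}(f)$, so any monomorphism through which both factor is surjective, hence an isomorphism. One wording slip: in your last sentence it is this generation property, not mere joint epimorphicity, that forces the monomorphism to be surjective, since epimorphisms in $\mathbf{Mon}$ need not be surjective (e.g. $\mathbb{N}\hookrightarrow\mathbb{Z}$).
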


\begin{proof}
For all $(x,y)\in\mathrm{Eq}(f)$ we can write
\begin{equation*}
\begin{split}
(x,y)\stackrel{\eqref{eqn:x=kq(x)+u}}{=}\big(kq(x)+u_m,kq(y)+u_m\big)&=(kq(x),0)+(u_m,kq(y)+u_m)\\
&=k_2(q(x))+j(u_m,kq(y)+u_m),
\end{split}
\end{equation*}
where $m=f(x)=f(y).$
\end{proof}

Hence the following characterisation of the Schreier extensions $\mathbb{E}$ for which $\rel{E}$ is a congruence:

\begin{corollary}
\label{cor:R_E-symmetric}
Given a Schreier extension $\mathbb{E}$ with $K$ commutative and the relation $\rel{E}$~\eqref{eqn:R_E}, the following statements are equivalent:
\begin{enumerate}
	\item[\em{(1)}] $k_2$ factors through $j;$
	\item[\em{(2)}] $j$ is an isomorphism;
	\item[\em{(3)}] $\rel{E}$ is symmetric;
	\item[\em{(4)}] $K$ is a group.
\end{enumerate}
\end{corollary}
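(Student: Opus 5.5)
I will prove the cycle $(1)\Rightarrow(2)\Rightarrow(3)\Rightarrow(4)\Rightarrow(1)$, working with the concrete descriptions $\R{E}=\{(x,k(a)+x)\}$ and $\mathrm{Eq}(f)=\{(x,y): f(x)=f(y)\}$. Throughout I regard $j$ as the inclusion $\R{E}\subseteq\mathrm{Eq}(f)$.

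For $(1)\Rightarrow(2)$: suppose $k_2=j\,g$ for some $g\colon K\to\R{E}$. Then every $k_2(a)$ lies in the submonoid $\R{E}$, and of course every $j(\cdot)$ does as well. The previous proposition gives the explicit decomposition
\[(x,y)=k_2(q(x))+j(u_m,kq(y)+u_m),\]
which expresses each element of $\mathrm{Eq}(f)$ as a sum of such elements. Since $\R{E}$ is a submonoid, it follows that $\R{E}=\mathrm{Eq}(f)$. Hence $j$ is a bijective monoid homomorphism, and therefore an isomorphism.

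For $(2)\Rightarrow(3)$: $\mathrm{Eq}(f)$ is symmetric. If $j$ is an isomorphism, then $\R{E}=\mathrm{Eq}(f)$ as relations, so $\rel{E}$ is symmetric.

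For $(3)\Rightarrow(4)$: take $a\in K$. The pair $(0,k(a))$ lies in $\R{E}$, so by symmetry $(k(a),0)\in\R{E}$. This means $0=k(b)+k(a)=k(b+a)$ for some $b\in K$, and since $k$ is injective, $b+a=0$. As $K$ is commutative, $a$ is invertible. This is the only step where a small argument is really needed, and it is immediate.

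For $(4)\Rightarrow(1)$: if $K$ is a group, then for $a\in K$ we have $k_2(a)=(k(a),0)=(k(a),k(-a)+k(a))\in\R{E}$. So $k_2$ factors through the inclusion $j$ as a monoid homomorphism, because $\R{E}$ is a submonoid and $j$ is injective. Alternatively, one can use Proposition~\ref{prop:K_group}, which gives that every element is a representative, and deduce directly that $\R{E}=\mathrm{Eq}(f)$.

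I expect no real obstacle. The only point needing care is $(1)\Rightarrow(2)$, where the joint extremal epimorphicity of $j$ and $k_2$ is what is used. Concretely, this reduces to the decomposition formula from the preceding proof: once $k_2$ lands in $\R{E}$, that formula forces $\R{E}$ to contain all of $\mathrm{Eq}(f)$.
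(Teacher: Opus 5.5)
Your proof is correct and follows the paper's route. The implications $(3)\Rightarrow(4)$ and $(4)\Rightarrow(1)$ are exactly the paper's remark that $a\in K$ is invertible if and only if $(k(a),0)\in R_{\mathbb{E}}$, and $(2)\Rightarrow(3)$ is trivial in both; for $(1)\Rightarrow(2)$ the paper argues abstractly (both $j$ and $k_2$ factor through the monomorphism $j$, so joint extremal epimorphicity makes $j$ an isomorphism), whereas you unpack the same fact elementwise via the decomposition $(x,y)=k_2(q(x))+j(u_m,kq(y)+u_m)$.
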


\begin{proof}
$(1)\Rightarrow(2)$ follows from the previous proposition, considering the diagram
\[
\xymatrixcolsep{2.5pc}
\xymatrix{
&{\R{E}} \ar@{>->}[d]^-{j} &\ \\
K \ar@{>->}[r]_-{k_2} \ar@/^1pc/[ru] &{\mathrm{Eq}(f)} &{\R{E}.} \ar@{>->}[l]^-{j} \ar@/_1pc/@{=}[lu]
}
\]
$(2)\Rightarrow(3)$ is clear.\\
$(3)\Rightarrow(4)$ and $(4)\Rightarrow(1)$ come from the remark that an element $a\in K$ is invertible if and only if $\big(k(a),0\big)\in\R{E},$ whereas $\big(0,k(a)\big)\in\R{E}$ is always true.
\end{proof}

The fact that $a\in K$ is invertible if and only if $\big(k(a),0\big)\in\R{E}$ means in particular that we have a pullback square
\begin{equation*}
\xymatrix{
{U(K)} \pullback \ar@{>->}[r]^-i \ar@{>->}[d]_{\overline{k_2}} &K \ar@{>->}[d]^-{k_2} \\
{\R{E}} \ar@{>->}[r]_-j &{\mathrm{Eq}(f),}
}
\end{equation*}
where $i$ is the inclusion morphism and $\overline{k_2}(a)=\big(k(a),0\big).$

In the above notation, if $K$ is also cancellative, we can prove that $\rel{E}$
is an $S$-reflexive relation (Definition~\ref{def:S-graph}) and that it can be seen as a relation on the whole extension $\mathbb{E}.$

\begin{lemma}
\label{lemma:unique_a}
If $\mathbb{E}$ is a cc-Schreier extension as above and $(x,y)\in \R{E}$ (see~\eqref{eqn:R_E}), then $y=k(a)+x$ for a \emph{unique} $a\in K.$
\end{lemma}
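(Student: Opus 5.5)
Suppose $y=k(a)+x=k(a')+x$ with $a,a'\in K$. The plan is to reduce to uniqueness with respect to a representative, where Lemma~\ref{lemma:rep_invertible}(1) applies, and then to cancel the common part using cancellativity of $K$.

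First write $x$ in Schreier form. Since $\mathbb{E}$ is a Schreier extension, by~\eqref{eqn:x=kq(x)+u} we have $x=kq(x)+u_{f(x)}$ for some representative $u_{f(x)}\in B_{f(x)}$. Substituting this into both expressions for $y$, and using that $k$ is a monoid homomorphism, gives
\[
k\big(a+q(x)\big)+u_{f(x)}=k\big(a'+q(x)\big)+u_{f(x)}.
\]

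Now apply Lemma~\ref{lemma:rep_invertible}(1) with $u=u_{f(x)}$. It yields $a+q(x)=a'+q(x)$ in $K$. Since $K$ is cancellative (commutativity is not even needed for this step), we conclude $a=a'$.

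There is no real obstacle here. The only point worth noting is that one cannot simply cancel $x$ directly in $X$, because $X$ need not be cancellative. This is exactly why we pass through the representative decomposition, which converts the equality in $X$ into an equality in $K$, where cancellation is available.
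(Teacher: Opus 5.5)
Your proof is correct and follows essentially the same route as the paper: decompose $x=kq(x)+u_{f(x)}$, apply Lemma~\ref{lemma:rep_invertible}(1) to get $a+q(x)=a'+q(x)$ in $K$, and then cancel $q(x)$ using the cancellativity of $K$.
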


\begin{proof}
Indeed, suppose that $k(a)+x=k(a^\prime)+x.$ Using~\eqref{eqn:x=kq(x)+u} we get $k(a)+kq(x)+u_{f(x)} = k(a^\prime)+kq(y)+u_{f(x)}.$ Applying Lemma~\ref{lemma:rep_invertible}$(1)$ and the cancellativity of $K,$ we conclude that $a=a^\prime.$
\end{proof}

\begin{proposition} If $\mathbb{E}$ is a cc-Schreier extension as above, then
\begin{equation}
\label{eqn:R_E-point}
\xymatrixcolsep{3pc}
\xymatrix{{K} \ar@{>->}[r]^-{k_1=\la 0,k \ra} &{\R{E}} \ar@<.5ex>@{>>}[r]^-{r_1} &X \ar@<.5ex>[l]^-{s_0}}
\end{equation}
is a Schreier point (see~\eqref{eqn:R_E}); consequently, $\rel{E}$ is an $S$-reflexive relation.
\end{proposition}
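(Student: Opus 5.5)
We have to check three things: that \eqref{eqn:R_E-point} is a split extension in $\mathbf{Mon}$ with $(K,k_1)$ a kernel of $r_1$; that a Schreier retraction $q_1\colon \R{E}\dashrightarrow K$ satisfying \eqref{eqn:equality for Schreier point} exists; and that it is unique. The last claim, that $\rel{E}$ is an $S$-reflexive relation, then follows at once from Definition~\ref{def:S-graph}, since $(r_1,r_2)$ is jointly monomorphic by construction.

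For the split extension, we already know $r_1s_0=id_X$. The map $k_1=\la 0,k\ra$ lands in $\R{E}$ because $k(a)=k(a)+0$. It is a monoid homomorphism, and it is injective because $k$ is. Its image is exactly the kernel of $r_1$. Indeed, an element $(x,z)\in\R{E}$ with $r_1(x,z)=0$ has $x=0$ and $z=k(a)+0=k(a)$. If $g\colon Y\to\R{E}$ satisfies $r_1g=0$, then $g(y)=(0,k(a_y))$, and $a_y$ is uniquely determined since $k$ is mono. The map $y\mapsto a_y$ is a homomorphism because $k$ reflects sums, and so $g$ factors uniquely through $k_1$.

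For the Schreier retraction, take $(x,z)\in\R{E}$. By Lemma~\ref{lemma:unique_a} there is a unique $a\in K$ with $z=k(a)+x$, and we set $q_1(x,z)=a$. Then
\[
k_1q_1(x,z)+s_0r_1(x,z)=(0,k(a))+(x,x)=(x,k(a)+x)=(x,z),
\]
which is \eqref{eqn:equality for Schreier point}. For uniqueness, suppose $(x,z)=(0,k(b))+(x,x)=(x,k(b)+x)$ for some $b\in K$. Then $z=k(b)+x$, and Lemma~\ref{lemma:unique_a} gives $b=a$.

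The only substantive point is the uniqueness of $a$, which is where cancellativity of $K$ enters. That is already settled by Lemma~\ref{lemma:unique_a}, so the rest is routine verification. It is worth noting that without cancellativity the retraction $q_1$ could fail to be well defined, which explains why the statement is restricted to cc-Schreier extensions.
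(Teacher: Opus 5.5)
Your proof is correct and is essentially the paper's argument: the decomposition $(x,k(a)+x)=(0,k(a))+(x,x)=k_1(a)+s_0r_1(x,k(a)+x)$, with the uniqueness of $a$ supplied by Lemma~\ref{lemma:unique_a} (where cancellativity enters). You also spell out the verification that $k_1$ is a kernel of $r_1$, which the paper dismisses as easy.
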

\begin{proof} It is easy to see that $k_1$ is the kernel of $r_1.$ For every $\big(x,k(a)+x\big)\in\R{E}$ we can write
\[
\big(x,k(a)+x\big)=\big(0,k(a)\big)+(x,x)=k_1(a)+s_0r_1\big(x,k(a)+x\big)
\]
for an element $a\in K$ which is unique by Lemma~\ref{lemma:unique_a}, and we conclude that~\eqref{eqn:R_E-point} is a Schreier point.
The corresponding Schreier retraction is the map of sets $q_1\colon \R{E} \dashrightarrow K,$ defined by $q_1(x,k(a)+x)=a.$
\end{proof}

We denote by $SExt_M$ the category whose objects are the Schreier extensions~\eqref{eqn:ses} with fixed codomain $M$ and whose morphisms are the morphisms~\eqref{eqn:morphism_of_ses} with $M^\prime=M$ and $\alpha_3=id_M.$ Since $\alpha_1$ is uniquely determined by $\alpha_2$ and the universal property of the kernel $(K^\prime,k^\prime)$ of $f',$ we denote such  morphisms as $(\alpha_1,\alpha_2)\colon \E \to \E',$ or simply as $\alpha_2\colon \E\to \E'.$

\begin{proposition}
If $\mathbb{E}$ is a cc-Schreier extension as above, then the relation $\rel{E}$ determines a relation
\begin{equation}
\begin{aligned}
\label{eqn:R_E-on-E}
\xymatrixcolsep{3.5pc}
\xymatrix{
{K\times K} \ar@<-.5ex>[d]_-{p_1} \ar@<.5ex>[d]^-{+} \ar@{>->}[r]^-{\hat{k}} &{\R{E}} \ar@<-.5ex>[d]_-{r_1} \ar@<.5ex>[d]^-{r_2} \ar@{->>}[r]^-{fr_1=fr_2} &{M} \ar@{=}[d] \\
K \ar@{>->}[r]_-k &X \ar@{->>}[r]_-f &M
}
\end{aligned}
\end{equation}
on $\mathbb{E}$ in $SExt_M,$ where $\hat{k}(a,b)=\big(k(a),k(b)+k(a)\big),$ $p_1(a,b)=a$ and $+$ is the monoid operation on $K.$
\end{proposition}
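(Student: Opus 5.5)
To show that \eqref{eqn:R_E-on-E} is a relation on $\mathbb{E}$ in $SExt_M$, I need three things. First, the top row must be a Schreier extension with kernel $(K\times K,\hat{k})$. Second, the two columns $(p_1,r_1)$ and $(+,r_2)$ must be morphisms of Schreier extensions, i.e. they must commute with the kernel and cokernel maps and preserve representatives. Third, the pair must be jointly monomorphic, with a common section, so that we obtain a reflexive relation.

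I start with the easy commutations. The map $fr_1=fr_2$ is a surjective homomorphism, since $r_1$ is split by $s_0$ and $f$ is surjective. Clearly $r_1\hat{k}=kp_1$. For the other square, $r_2\hat{k}(a,b)=k(b)+k(a)=k(a+b)$, using commutativity of $K$. The morphism $\hat{k}$ is a homomorphism: the computation giving \eqref{eqn:Patrick} applied with $x=k(a)$, $f(x)=1$ and $(A_1)$ shows $(k(a),k(b)+k(a))+(k(a'),k(b')+k(a'))=(k(a+a'),k(b+b')+k(a+a'))$. It is injective because $k$ is. The pair $(r_1,r_2)$ is jointly monic by definition. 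Both $r_i$ admit $s_0$ as a common section, and the section $K\to K\times K$, $a\mapsto(a,0)$, is compatible with both $p_1$ and $+$.

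Next, $\hat{k}$ is a kernel of $fr_1$. If $f(x)=1$ and $(x,k(c)+x)\in\R{E}$, then $x=k(a)$ and $k(c)+x=k(c+a)$. So the element is $\hat{k}(a,c)$, and $(a,c)$ is unique by Lemma~\ref{lemma:unique_a} and injectivity of $k$.

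The main step is the Schreier property of the top row. I will show that for each $m$, the pair $(u_m,u_m)$ is a representative. Take $(x,k(c)+x)$ with $f(x)=m$, and write $x=kq(x)+u_m$. Then
\[(x,k(c)+x)=(kq(x),k(c)+kq(x))+(u_m,u_m)=\hat{k}(q(x),c)+(u_m,u_m).\]
For uniqueness, suppose $\hat{k}(a,b)+(u_m,u_m)=\hat{k}(a',b')+(u_m,u_m)$. The first components give $k(a)+u_m=k(a')+u_m$, hence $a=a'$ by Lemma~\ref{lemma:rep_invertible}(1). The second components give $k(b+a)+u_m=k(b'+a)+u_m$, hence $b+a=b'+a$, and cancellativity yields $b=b'$.

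Finally, both columns preserve representatives. By Proposition~\ref{prop:rep_pres} it suffices that some representative of each $m$ is sent to a representative. Here $r_1(u_m,u_m)=r_2(u_m,u_m)=u_m\in B_m$, so this holds. The only genuine work is the uniqueness argument above, which is where cancellativity and commutativity of $K$ are used.
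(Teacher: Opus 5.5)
Your proof is correct and follows essentially the same route as the paper: you write $\bigl(x,k(c)+x\bigr)=\hat{k}\bigl(q(x),c\bigr)+(u_m,u_m)$, get uniqueness from Lemma~\ref{lemma:rep_invertible}(1) together with cancellativity, conclude that the $(u_m,u_m)$ are representatives (so both columns are morphisms in $SExt_M$), and finish with joint monicity of $(r_1,r_2)$. One small slip: injectivity of $\hat{k}$ does not follow from injectivity of $k$ alone, since from $k(b+a)=k(b'+a)$ you still need cancellativity of $K$ to get $b=b'$, but your kernel and uniqueness arguments already supply exactly this step.
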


\begin{proof}
The fact that $\hat{k}$ is a kernel of $fr_1=fr_2$ is immediate, as is the fact that $+$ is a monoid homomorphism (because $K$ is commutative). It is also easy to see that, since $K$ is cancellative, the morphisms $p_1$ and $+$ are jointly monomorphic in $\mathbf{Mon}.$ For every $\big(x,k(b)+x\big)\in\R{E}$ we have $\big(x,k(b)+x\big)\stackrel{\eqref{eqn:x=kq(x)+u}}{=}\big(kq(x),k(b)+kq(x)\big)+\big(u_{f(x)},u_{f(x)}\big)=\hat{k}(q(x),b)+\big(u_{f(x)},u_{f(x)}\big)$ for the unique $q(x)\in K.$ The couple $(q(x),b)$ such that $\big(x,k(b)+x\big)=\hat{k}(q(x),b)+\big(u_{f(x)},u_{f(x)}\big)$ is then unique by the uniqueness of $q(x)$ and the cancellativity of $K$ (see Lemma~\ref{lemma:unique_a}). This proves that the upper row in~\eqref{eqn:R_E-on-E} is a Schreier extension with representatives $(u_m,u_m).$ It follows that $(p_1,r_1),(+,r_2)$ are morphisms in $SExt_M,$ and since both pairs $(p_1,+)$ and $(r_1,r_2)$ are jointly monomorphic, we conclude that~\eqref{eqn:R_E-on-E} is a relation on $\mathbb{E}.$
\end{proof}

The main point, now, is the following:

\begin{proposition}
If $\mathbb{E}$ is a cc-Schreier extension as above, the $S$-reflexive relation $\rel{E}$~\eqref{eqn:R_E} is self-centralizing in the sense of Definition~\ref{def:centralizing}.
\end{proposition}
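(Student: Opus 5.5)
We plan to apply Proposition~\ref{prop:char_centralizing} with $\mathcal{R}=\mathcal{R}^\prime=\rel{E}$. By the previous proposition, $\rel{E}$ is an $S$-reflexive relation whose associated Schreier point is~\eqref{eqn:R_E-point}, with kernel $k_1=\la 0,k\ra\colon K\rightarrowtail\R{E}$ and Schreier retraction $q_1\big(x,k(a)+x\big)=a$. Identifying $K$ with $k(K)\subseteq X$ as in Proposition~\ref{prop:char_centralizing}, so that $k_1(t)=(0,k(t))$, it suffices to check the following condition. For every $t\in K$ and every $(x,y)\in\R{E}$ we need
\[
k\big(q_1(y,y+k(t))\big)+x=x+k(t)
\]
in $X$. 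Proposition~\ref{prop:char_centralizing} then yields both the self-centralisation and an explicit connector $p(x,y,z)=k\big(q_1(y,z)\big)+x$.

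To verify the condition, we first compute $q_1\big(y,y+k(t)\big)$. Lemma~\ref{lemma:patrick} (which needs only commutativity of $K$) gives $y+k(t)=k\big(f(y)\ast t\big)+y$. By Lemma~\ref{lemma:unique_a} (which uses cancellativity), the element of $K$ witnessing $(y,y+k(t))\in\R{E}$ is unique, so $q_1\big(y,y+k(t)\big)=f(y)\ast t$.

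Next, $(x,y)\in\R{E}$ means $y=k(a)+x$ for some $a\in K$, hence $f(y)=f(x)$. Therefore the left-hand side of the condition is $k\big(f(x)\ast t\big)+x$. Applying Lemma~\ref{lemma:patrick} again, this equals $x+k(t)$, which is exactly the required equality.

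The main point needing care is the precise form of the criterion in Proposition~\ref{prop:char_centralizing}: the kernel element $t$ must be read through the embedding $k$, so that "$y+t$" means $y+k(t)$ and "$q^\prime(\cdot)+x$" means $k(q_1(\cdot))+x$. One must also make sure that $q_1$ is genuinely well defined, and this is where cancellativity of $K$ enters, through Lemma~\ref{lemma:unique_a}. Once these identifications are fixed, the verification reduces to two applications of~\eqref{eqn:Patrick} together with $f(x)=f(y)$. If desired, we can double-check directly that $p(x,y,z)=k\big(q_1(y,z)\big)+x$ satisfies $p(a,a,z)=z$ and $p(b,y,y)=b$: these follow from $q_1(y,y)=0$ and from $z=k\big(q_1(a,z)\big)+a$.
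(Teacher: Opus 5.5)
Your proof is correct and follows essentially the paper's route: reduce via Proposition~\ref{prop:char_centralizing} to checking $kq_1\big(y,y+k(b)\big)+x=x+k(b)$, then apply~\eqref{eqn:Patrick} twice. The only cosmetic difference is that you apply~\eqref{eqn:Patrick} to $y$ directly and use $f(y)=f(x)$. The paper instead substitutes $y=k(a)+x$ and reorders using the commutativity of $K$.
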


\begin{proof}
By Proposition~\ref{prop:char_centralizing}, it is enough to prove that for every $(x,y)\in\R{E}$ and every $b\in K$ one has $kq_1\big(y,y+k(b)\big)+x=x+k(b),$ where $q_1\colon\rel{E}\dashrightarrow K$ is the Schreier retraction of~\eqref{eqn:R_E-point}. Now, if $(x,y)\in\R{E},$ we have $y=k(a)+x$ for a unique $a\in K$ (see Lemma~\ref{lemma:unique_a}), and
\begin{equation*}
\begin{split}
q_1\big(k(a)+x,k(a)+x+k(b)\big)&=q_1\big(k(a)+x,k(a)+k\big(f(x)\ast b\big)+x\big)\\
&=q_1\big(k(a)+x,k\big(f(x)\ast b\big)+k(a)+x\big)\\
&=f(x)\ast b,
\end{split}
\end{equation*}
using~\eqref{eqn:Patrick} and the commutativity of $K;$ then indeed
\[
kq_1\big(k(a)+x,k(a)+x+k(b)\big)+x=k\big(f(x)\ast b\big)+x=x+k(b),
\]
using again~\eqref{eqn:Patrick}.
\end{proof}

The same Proposition~\ref{prop:char_centralizing} gives us the explicit definition of the connector between $\rel{E}$ and itself as $p=p_{\mathbb{E}}\colon \R{E}\times_X\R{E}\longrightarrow X,$
\begin{equation}
\label{eqn: def of p_E}
	p\big(x,k(a)+x,k(b)+k(a)+x\big)=k(b)+x,
\end{equation}
for any cc-Schreier extension $\mathbb{E}.$ We stress the fact that the uniqueness of $p$ entails that this construction depends only on $\mathbb{E}.$

\begin{remark}
\label{rmk:group_slice}
If $X$ is a group, so that $\mathbb{E}$ is a short exact sequence of groups with abelian kernel $K,$ by Corollary~\ref{cor:R_E-symmetric} the relation $\rel{E}$ is simply $\mathcal{E}q(f),$ the kernel pair relation of $f,$ and by the above equation the connector $p\colon\mathrm{Eq}(f)\times_X\mathrm{Eq}(f)\cong\{(x,y,z)\in X\times X\times X:f(x)=f(y)=f(z)\}\longrightarrow X$ is given by
\begin{equation*}
\begin{split}
p(x,y,z)&=p\big(x,k(a)+x,k(b)+k(a)+x\big)\\
&=k(b)+x\\
&=x-x-k(a)+k(b)+k(a)+x\\
&=x-\big(k(a)+x\big)+\big(k(b)+k(a)+x\big)\\
&=x-y+z
\end{split}
\end{equation*}
(recalling that here we can write $y=k(a)+x$ and $z=k(b)+y$ for unique elements $a,b\in K).$

This means that in this case $p$ is precisely the (unique, autonomous \cite{bourn-direction}) Mal'tsev operation on the internal Mal'tsev algebra $f$ in the slice category $\mathbf{Gp}/M$ (i.e. an object endowed with an internal Mal'tsev operation in $\mathbf{Gp}/M$).
\end{remark}

We shall denote for the sake of brevity $\P{E}=\R{E}\times_X\R{E}\cong\big\{\big(x,k(a)+x,k(b)+k(a)+x\big):x\in X \ \& \ a,b\in K\big\},$ so that we have a pullback
\begin{equation*}
\xymatrix{
{\P{E}} \pullback \ar[r]^-{p_2} \ar[d]_-{p_1}  &{\R{E}} \ar[d]^-{r_1} \\
{\R{E}} \ar[r]_{r_2}  &X
}
\end{equation*}
with projections $p_1\big(x,k(a)+x,k(b)+k(a)+x\big)=\big(x,k(a)+x\big)$ and $p_2\big(x,k(a)+x,k(b)+k(a)+x\big)=\big(k(a)+x,k(b)+k(a)+x\big).$

Now we come to the main core of the construction. For a cc-Schreier extension $\mathbb{E},$ define a relation $\Ch{E}$ on $\R{E}$ by: given $(x,k(a)+x), (y,k(b)+y)\in \R{E},$ let
\[
\big(x,k(a)+x\big)\Ch{E}\big(y,k(b)+y\big)\Longleftrightarrow\big(x,y,k(b)+y\big)\in \P{E} \ \& \ k(a)+x=p\big(x,y,k(b)+y\big),
\]
which is equivalent to
\[
\big(x,k(a)+x\big)\Ch{E}\big(y,k(b)+y\big)\Longleftrightarrow(x,y)\in\R{E} \ \& \ k(a)+x=k(b)+x
\]
and to
\begin{equation}
\label{eqn:ch_3}
\big(x,k(a)+x\big)\Ch{E}\big(y,k(b)+y\big)\Longleftrightarrow(x,y)\in\R{E} \ \& \ a=b
\end{equation}
(using Lemma~\ref{lemma:unique_a}). It is an internal relation in $\mathbf{Mon},$ because so is $\R{E}$ and $p$ is a monoid homomorphism, and it is clearly reflexive and transitive (using~\eqref{eqn:ch_3}). As a reflexive graph, it is represented by
\begin{equation}
\label{eqn:chasles}
\xymatrixcolsep{4pc}
\xymatrix{\Ch{E}: \P{E} \ar@<.9ex>[r]^-{\pi_1=\la r_1p_1,p\ra} \ar@<-.9ex>[r]_-{p_2} &{\R{E},} \ar[l]|-{\sigma_0}}
\end{equation}
where $\pi_1=\la r_1p_1,p\ra\colon\big(x,k(a)+x,k(b)+k(a)+x\big)\mapsto\big(x,k(b)+x\big)$ and $\sigma_0\colon\big(x,k(a)+x\big)\mapsto\big(x,x,k(a)+x\big).$

We shall call $\Ch{E}$ the \emph{Chasles relation} associated with the cc-Schreier extension $\mathbb{E},$ as it comes from Remark~\ref{rmk:group_slice} that it is indeed a generalisation of the homonymous relation defined in \cite{bourn-direction}.

As for $\rel{E}$ in the cancellative case, \eqref{eqn:chasles} turns out to be an $S$-reflexive relation:

\begin{proposition}
If $\mathbb{E}$ is a cc-Schreier extension as above and $\mathcal{R}_\E$ is the relation~\eqref{eqn:R_E}, then the split extension
\[
	\xymatrixcolsep{3.5pc}\xymatrix{{K} \ar@{>->}[r]^-{\kappa_1=\la 0,k,k\ra} & {\P{E}} \ar@<.5ex>@{>>}[r]^-{\pi_1} &{\R{E}} \ar@<.5ex>[l]^-{\sigma_0}}
\]
is a Schreier point; consequently, $\Ch{E}$ is an $S$-reflexive relation.
\end{proposition}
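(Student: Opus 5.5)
We follow the pattern of the proof that \eqref{eqn:R_E-point} is a Schreier point: identify the kernel of $\pi_1$, then exhibit the required unique decomposition. Elements of $\P{E}$ are written as triples $\big(x,k(a)+x,k(b)+k(a)+x\big)$ with $x\in X$ and $a,b\in K$. By Lemma~\ref{lemma:unique_a} (which uses cancellativity), such a triple determines $x$, $a$ and $b$ uniquely. We also recall that $\pi_1$ sends it to $\big(x,k(b)+x\big)$.

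First we check the routine facts about the split extension. The map $\sigma_0$ is a homomorphism, since it is induced by $s_0$ and the identity, and $\pi_1\sigma_0=id$. The map $\kappa_1=\la 0,k,k\ra$ lands in $\P{E}$, because $\big(0,k(a),k(0)+k(a)+0\big)$ is the triple with $x=0$, $b=0$.

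Next we show that $\kappa_1$ is a kernel of $\pi_1$. We have $\pi_1\big(x,k(a)+x,k(b)+k(a)+x\big)=(0,0)$ if and only if $x=0$ and $k(b)=0$. The second condition gives $b=0$, since $k$ is injective. So the kernel consists exactly of the triples $\big(0,k(a),k(a)\big)=\kappa_1(a)$, and $\kappa_1$ is injective because $k$ is.

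The main step is the decomposition
\[
\big(x,k(a)+x,k(b)+k(a)+x\big)=\kappa_1(a)+\sigma_0\pi_1\big(x,k(a)+x,k(b)+k(a)+x\big).
\]
For the right-hand side, $\sigma_0\pi_1$ of the triple is $\sigma_0\big(x,k(b)+x\big)=\big(x,x,k(b)+x\big)$. Adding $\kappa_1(a)=\big(0,k(a),k(a)\big)$ componentwise gives $\big(x,k(a)+x,k(a)+k(b)+x\big)$. This equals the original triple because $K$ is commutative, so $k(a)+k(b)=k(b)+k(a)$.

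For uniqueness, suppose $\kappa_1(c)+\sigma_0\pi_1(t)=t$ for some $c\in K$. Comparing second components gives $k(c)+x=k(a)+x$, and Lemma~\ref{lemma:unique_a} then forces $c=a$. Hence the Schreier retraction is $q(x,k(a)+x,k(b)+k(a)+x)=a$, and the split extension is a Schreier point.

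By Definition~\ref{def:S-graph}, this makes $\Ch{E}$ an $S$-reflexive graph. It is moreover a relation, since $\pi_1$ and $p_2$ are jointly monic: $p_2$ recovers $k(a)+x$ and $k(b)+k(a)+x$, and $\pi_1$ recovers $x$. So $\Ch{E}$ is an $S$-reflexive relation.

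The only delicate point is uniqueness. It is exactly where cancellativity enters, through Lemma~\ref{lemma:unique_a}, and we must make sure to apply it to the second component, where $a$ appears alone.
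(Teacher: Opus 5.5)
Your proof is correct and follows essentially the same route as the paper: you identify $\kappa_1$ as the kernel of $\pi_1$, write each triple as $\kappa_1(a)+\sigma_0\pi_1(\cdot)$ using the commutativity of $K$, and get uniqueness of $a$ from Lemma~\ref{lemma:unique_a} applied to the second component. The only difference is that you also write out the routine checks the paper calls ``easy to see'': that $\kappa_1$ lands in $P_{\mathbb{E}}$, the explicit kernel computation, and the joint monicity of $(\pi_1,p_2)$.
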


\begin{proof}
It is easy to see that $\kappa_1$ is the kernel of $\pi_1.$ For every $\big(x,k(a)+x,k(b)+k(a)+x\big)\in\P{E}$ we can write
\begin{equation*}
\begin{split}
\big(x,k(a)+x,k(b)+k(a)+x\big)&=\big(x,k(a)+x,k(a)+k(b)+x\big)\\
&=\big(0,k(a),k(a)\big)+\big(x,x,k(b)+x\big)\\
&=\kappa_1(a)+\sigma_0\big(x,k(b)+x\big)\\
&=\kappa_1(a)+\sigma_0\pi_1\big(x,k(a)+x,k(b)+k(a)+x\big),
\end{split}
\end{equation*}
with $a$ unique by Lemma~\ref{lemma:unique_a}. The corresponding Schreier retraction is the map of sets $\overline{q}_1\colon \P{E}\dashrightarrow K,$ defined by $\overline{q}_1\big(x,k(a)+x,k(b)+k(a)+x\big)=a.$
\end{proof}

By Proposition~\ref{prop:3.1.5} and Corollary~\ref{cor:R_E-symmetric}, then, we have:
\begin{corollary}
Given a cc-Schreier extension $\mathbb{E}$ as above, the Chasles relation $\Ch{E}$~\eqref{eqn:chasles} is symmetric if and only if $\R{E}$ is symmetric (if and only if $K$ is a group).
\end{corollary}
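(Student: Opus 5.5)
By Proposition~\ref{prop:3.1.5} applied to the $S$-reflexive relation $\Ch{E}$, symmetry of $\Ch{E}$ is equivalent to the kernel of $\pi_1$ being a group. By the previous proposition this kernel is $(K,\kappa_1)$. Symmetry of $\R{E}$ is likewise equivalent to $K$ being a group, either by Proposition~\ref{prop:3.1.5} applied to the Schreier point~\eqref{eqn:R_E-point}, whose kernel is again $K$, or directly by Corollary~\ref{cor:R_E-symmetric}. Chaining the two equivalences gives the statement.

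To make this self-contained, I would also check the equivalence with $K$ being a group directly from the description~\eqref{eqn:ch_3}. Suppose $K$ is a group. If $(x,k(a)+x)\,\Ch{E}\,(y,k(a)+y)$, then $(x,y)\in\R{E}$, so $y=k(c)+x$. Hence $x=k(-c)+y$, i.e.\ $(y,x)\in\R{E}$, and therefore $(y,k(a)+y)\,\Ch{E}\,(x,k(a)+x)$.

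Conversely, suppose $\Ch{E}$ is symmetric and let $c\in K$. We have $(0,0)\,\Ch{E}\,(k(c),k(c))$, since $(0,k(c))\in\R{E}$ and both pairs have difference $0$. By symmetry, $(k(c),k(c))\,\Ch{E}\,(0,0)$. This forces $(k(c),0)\in\R{E}$, i.e.\ $0=k(d)+k(c)$ for some $d\in K$. Since $k$ is injective, $d+c=0$, so $c$ is invertible and $K$ is a group.

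There is no real obstacle here. The only point needing care is to use Lemma~\ref{lemma:unique_a}, via~\eqref{eqn:ch_3}, so that the $\Ch{E}$-relation is read as ``same $K$-difference and $(x,y)\in\R{E}$''.
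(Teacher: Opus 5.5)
Your proposal is correct and follows the paper's argument. The paper derives the corollary by combining Proposition~\ref{prop:3.1.5}, applied to the $S$-reflexive relation $\Ch{E}$ (whose kernel of $\pi_1$ is $K$ via $\kappa_1$), with Corollary~\ref{cor:R_E-symmetric}. Your additional elementwise check via~\eqref{eqn:ch_3}, which uses the pairs $(0,0)$ and $(k(c),k(c))$ for the converse, is also correct and makes the argument independent of the cited result on $S$-reflexive relations.
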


\section{Definition of the direction functor for cc-Schreier extensions}
\label{sec:direction}
Fix a cc-Schreier extension of monoids $\xymatrix{{\mathbb{E}: \ K} \ar@{>->}[r]^-k &X \ar@{->>}[r]^-f &M}$ and consider the Chasles relation $\Ch{E}$ associated with $\mathbb{E}$ as in~\eqref{eqn:chasles}, using the notation of the previous section. Let $\mathfrak{s}_0$ denote the unique morphism induced by the universal property of the pullback $(\P{E},p_1,p_2)$ as in the diagram
\begin{equation*}
\xymatrix{
{\R{E}} \ar@/^1pc/[rrd]^-{s_0r_2} \ar@/_1pc/@{=}[rdd] \ar@{.>}[rd]|{\mathfrak{s}_0} &\ &\ \\
&{\P{E}} \ar[r]^-{p_2} \ar[d]_-{p_1} \pullback &{\R{E}} \ar@<-.5ex>[d]_-{r_1} \\
&{\R{E}} \ar[r]_{r_2}  &{X,} \ar@<-.5ex>[u]_-{s_0}
}
\end{equation*}
$\mathfrak{s}_0(x,k(a)+x)=(x,k(a)+x,k(a)+x).$ It is easily seen that $\xymatrixcolsep{4pc}\xymatrix{{\P{E}} \ar@<.9ex>[r]^-{p_1} \ar@<-.9ex>[r]_-{\pi_2=\la p,r_2p_2 \ra} &{\R{E}} \ar[l]|-{\mathfrak{s}_0}}$ is also an $S$-reflexive relation on $\R{E}$ in $\mathbf{Mon}$ sharing all the properties of $\Ch{E},$ and that the diagram
\[
\xymatrixcolsep{3pc}
\xymatrixrowsep{3pc}
\xymatrix{
{\P{E}} \ar@<-.9ex>[d]_-{p_1} \ar@<.9ex>[d]^-{\pi_2} \ar@<.9ex>[r]^-{\pi_1} \ar@<-.9ex>[r]_-{p_2} &{\R{E}} \ar[l]|{\sigma_0} \ar@<-.9ex>[d]_-{r_1} \ar@<.9ex>[d]^-{r_2} \\
{\R{E}} \ar[u]|{\mathfrak{s}_0} \ar@<.9ex>[r]^-{r_1} \ar@<-.9ex>[r]_-{r_2} &{X} \ar[l]|{s_0} \ar[u]|{s_0}
}
\]
is commutative in the usual sense.

We know by Proposition~\ref{prop:f_coeq} that $f$ is a coequaliser of $\rel{E}$ and, as $\mathbf{Mon}$ is cocomplete, we can also consider a coequaliser of the Chasles relation $\Ch{E},$ which we denote by
\[
\xymatrix{{\gamma=\gamma_{\mathbb{E}}=coeq(\pi_1,p_2)\colon \R{E}}\ar@{->>}[r] &{df;}}
\]
thus
\begin{equation}
\label{eqn: def of gamma}
	\gamma(x,k(b)+x)=\gamma(k(a)+x,k(a)+k(b)+x), \;\;\forall a,b\in K.
\end{equation}

We get a commutative diagram
\begin{equation}
\begin{aligned}
\label{eqn:direction_D}
\xymatrixcolsep{3pc}
\xymatrixrowsep{3pc}
\xymatrix{
{\P{E}} \ar@<-.9ex>[d]_-{p_1} \ar@<.9ex>[d]^-{\pi_2} \ar@<.9ex>[r]^-{\pi_1} \ar@<-.9ex>[r]_-{p_2} &{\R{E}} \ar[l]|{\sigma_0} \ar@<-.9ex>[d]_-{r_1} \ar@<.9ex>[d]^-{r_2} \ar@{->>}[r]^-{\gamma} &{df} \ar@<-.5ex>[d]_-{\overline{f}}\\
{\R{E}} \ar[u]|{\mathfrak{s}_0} \ar@<.9ex>[r]^-{r_1} \ar@<-.9ex>[r]_-{r_2} &{X} \ar[l]|{s_0} \ar[u]|{s_0} \ar@{->>}[r]_-f &M \ar@<-.5ex>[u]_-{\overline{s}}
}
\end{aligned}
\end{equation}
in $\mathbf{Mon},$ where $\overline{f}$ and $\overline{s}$ are uniquely induced by the universal property of the coequalisers $\gamma$ and $f,$ respectively, and are accordingly (well) defined by
\begin{equation}
\label{eqn: overline f and s}
	\overline{f}\big(\gamma(x,k(a)+x)\big)=f(x)\;\; \text{and} \;\;\overline{s}(m)=\gamma(x,x),
\end{equation}
where $x\in X$ is any element satisfying $f(x)=m.$ Clearly one has $\overline{f}\overline{s}=id_M,$ and it also follows from the commutativity of the upward right-hand square in~\eqref{eqn:direction_D} that
\begin{equation}
\label{eqn:equal gammas}
	\gamma(x,x)=\gamma(y,y)\;\; \text{for all } (x,y)\in\mathrm{Eq}(f).
\end{equation}

Note that $\Ch{E}$ is not always a congruence (it is not symmetric if $K$ is not a group), so that in general we only have an inclusion $\Ch{E}\subseteq \mathcal{E}q(\gamma).$  As a consequence, the implication $\gamma\big(x,k(a)+x\big)=\gamma\big(y,k(b)+y\big)\Rightarrow \big(x,k(a)+x\big)\Ch{E}\big(y,k(b)+y\big)$ is, in general, false. However, we are able to describe explicitly the congruence generated by the Chasles relation, i.e. the kernel pair relation $\mathcal{E}q(\gamma)$ of the coequaliser $\gamma$ of $\Ch{E}$:

\begin{lemma}
Consider the diagram~\eqref{eqn:direction_D}. The kernel pair relation of $\gamma$ coincides with the relation $\xymatrix{\tau: \ T \ar@<.5ex>[r]^-{t_1} \ar@<-.5ex>[r]_-{t_2} & {\R{E}}}$ defined by $\Big( \big(x,k(a)+x\big), \big(y,k(b)+y\big)\Big)\in T$ if and only if $a=b$ and $(x,y)\in \mathrm{Eq}(f)$; $t_1$ and $t_2$ are the first and second projections, respectively.
\end{lemma}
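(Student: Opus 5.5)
We prove the two inclusions $\Ch{E}\subseteq \tau\subseteq\mathcal{E}q(\gamma)$ and $\mathcal{E}q(\gamma)\subseteq\tau$. Since $\mathcal{E}q(\gamma)$ is the congruence generated by $\Ch{E}$, the first will follow once we show that $\tau$ is a congruence contained in $\mathcal{E}q(\gamma)$. For the second it is enough to check that $\tau$ is a congruence containing $\Ch{E}$: then $\gamma$ factors through the coequaliser of $\tau$, and since $\tau$ is the kernel pair of the map it determines, we obtain $\mathcal{E}q(\gamma)\subseteq\tau$. Concretely, we will exhibit a monoid homomorphism $\phi\colon \R{E}\to K\times M$ whose kernel pair is exactly $\tau$.

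\textbf{Construction of $\phi$.} Set $\phi\big(x,k(a)+x\big)=(a,f(x))$. It is well defined by Lemma~\ref{lemma:unique_a}. By construction $\phi(\xi)=\phi(\xi')$ holds iff $a=b$ and $f(x)=f(y)$, i.e.\ iff $(\xi,\xi')\in T$. So $\tau$ is the kernel pair of $\phi$ and hence an equivalence relation, as soon as $\phi$ is a homomorphism.

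\textbf{Choosing the target.} Checking that $\phi$ is a homomorphism is the main point. The computation after~\eqref{eqn:R_E} gives
\[
\big(x,k(a)+x\big)+\big(y,k(b)+y\big)=\big(x+y,k(a+f(x)\ast b)+x+y\big),
\]
so the first component is not additive for the direct product. The target must therefore be the semidirect product $K\rtimes_\eta M$ of the action~\eqref{eqn:Schreier ext action eta}, with $(a,m)+(b,n)=(a+m\ast b,mn)$; this is a monoid by Proposition~\ref{prop:monoid action}, and with it $\phi$ is a homomorphism. Its kernel pair is still the set-theoretic relation $T$, so $\tau$ is a congruence. It contains $\Ch{E}$ by~\eqref{eqn:ch_3}, since $(x,y)\in\R{E}$ implies $f(x)=f(y)$.

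\textbf{Showing $\tau\subseteq\mathcal{E}q(\gamma)$.} Let $\big(x,k(a)+x\big)$ and $\big(y,k(a)+y\big)$ satisfy $f(x)=f(y)=m$, and let $u=u_m$ be a representative. By~\eqref{eqn:x=kq(x)+u}, $x=kq(x)+u$, so $\big(u,k(a)+u\big)\,\Ch{E}\,\big(x,k(a)+x\big)$ via~\eqref{eqn: def of gamma}, using the commutativity of $K$. Likewise for $y$. Hence both elements have the same $\gamma$-image as $\big(u,k(a)+u\big)$, and the inclusion follows. This is the same argument that gave~\eqref{eqn:equal gammas}. Combining the two inclusions proves the lemma.
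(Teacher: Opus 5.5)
Your proof is correct and has the same skeleton as the paper's. Both show that $\tau$ is a congruence containing $\Ch{E}$ via \eqref{eqn:ch_3}, deduce $\mathcal{E}q(\gamma)\subseteq\tau$ by minimality, and obtain $\tau\subseteq\mathcal{E}q(\gamma)$ by rewriting $x=kq(x)+u_m$.

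\textbf{Where the routes differ.}
The paper shows $\tau$ is a congruence by checking directly that $T$ is closed under addition, using \eqref{eqn:Patrick} and $f(x)=f(y)$. You instead realise $\tau$ as the kernel pair of the surjective homomorphism $\phi\colon \R{E}\to K\rtimes_\eta M$. This packages the same Patrick computation. It additionally needs $\eta$ to be a genuine action (axiom $(A_4)$, hence cancellativity via Proposition~\ref{prop:monoid action}) so that the semidirect product is a monoid; that is harmless in the cc setting.

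What your route buys: since $\phi$ is surjective and $\mathcal{E}q(\phi)=\tau=\mathcal{E}q(\gamma)$, you get $df\cong K\rtimes_\eta M$ at once. The paper only reaches this at the end of Section~\ref{sec:direction}, through Proposition~\ref{prop:df_schreier}, Theorem~\ref{thm:internal_monoids} and the comparison of actions.

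For $\tau\subseteq\mathcal{E}q(\gamma)$, you join both $(x,k(a)+x)$ and $(y,k(a)+y)$ to $(u_m,k(a)+u_m)$ by a single Chasles step each. This is slightly more direct than the paper's chain, which passes through the common element with first component $kq(x)+kq(y)+u_m$.

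\textbf{One slip to fix.}
In your opening paragraph you say that $\gamma$ factors through the coequaliser of $\tau$. That would give $\tau\subseteq\mathcal{E}q(\gamma)$, the opposite inclusion. The correct direction is that $\phi$, the quotient by $\tau$, factors through $\gamma$: it coequalises $\pi_1,p_2$ because $\Ch{E}\subseteq\tau$, and hence $\mathcal{E}q(\gamma)\subseteq\mathcal{E}q(\phi)=\tau$. Equivalently, just invoke the minimality of $\mathcal{E}q(\gamma)$ among congruences containing $\Ch{E}$, as your first sentence already does.
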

\begin{proof}
By Lemma~\ref{lemma:unique_a}, $\tau$ is well defined. Suppose now that $\Big( \big(x,k(a)+x\big), \big(y,k(a)+y\big)\Big) \in T$ and $\Big(\big(x^\prime,k(a^\prime)+x^\prime\big),\big(y^\prime,k(a^\prime)+y^\prime\big)\Big) \in T.$ Using~\eqref{eqn:Patrick}, we get
\begin{equation*}
\begin{split}
\big(x,k(a)+x\big)+\big(x^\prime,k(a^\prime)+x^\prime\big)&=\big(x+x^\prime,k(a)+x+k(a^\prime)+x^\prime\big)\\
&=\big(x+x^\prime,k(a)+k(f(x)\ast a^\prime)+x+x^\prime\big)
\end{split}
\end{equation*}
and similarly $\big(y,k(a)+y\big)+\big(y^\prime,k(a^\prime)+y^\prime\big)=\big(y+y^\prime,k(a)+k(f(y)\ast a^\prime)+y+y^\prime\big),$ with $k(a)+k(f(x)\ast a^\prime)=k(a)+k(f(y)\ast a^\prime)$; it is obvious that $(x+x',y+y')\in \mathrm{Eq}(f).$ This proves that $\tau$ is an internal relation on $\R{E}$ in $\mathbf{Mon}.$  It is clear that $\tau$ is reflexive, symmetric, and transitive: thus $\tau$ is a congruence on $\R{E}.$

It is also easy to check that $\big(x,k(a)+x\big)\Ch{E}\big(k(a^\prime)+x,k(a)+k(a^\prime)+x\big)$ implies $\big(x,k(a)+x\big)\tau\big(k(a^\prime)+x,k(a)+k(a^\prime)+x\big)$; it follows that $\Ch{E}\subseteq\tau,$ and thus $\mathcal{E}q(\gamma)\subseteq\tau$ by minimality of $\mathcal{E}q(\gamma)$ among all congruences on $\R{E}$ containing $\Ch{E}.$ Conversely, consider $\big(x,k(a)+x\big)\tau\big(y,k(a)+y\big).$ Suppose that $f(x)=f(y)=m$ and let $u_m\in B_m.$ We use~\eqref{eqn:x=kq(x)+u}, \eqref{eqn:equal gammas} and the commutativity of $K$ to get
\begin{equation*}
\begin{split}
\gamma\big(x,k(a)+x\big)&=\gamma\big(kq(x)+u_m,k(a)+kq(x)+u_m\big)\\
&=\gamma\big(kq(y)+kq(x)+u_m,kq(y)+k(a)+kq(x)+u_m\big)\\
&=\gamma\big(kq(x)+kq(y)+u_m,k(a)+kq(x)+kq(y)+u_m\big)\\
&=\gamma\big(kq(x)+y,kq(x)+k(a)+y\big)\\
&=\gamma\big(y,k(a)+y\big).
 \end{split}
\end{equation*}
This gives $\tau\subseteq\mathcal{E}q(\gamma),$ so that $\tau= \mathcal{E}q(f).$
\end{proof}

\begin{corollary}
\label{cor:a_equals_b}
Consider $\gamma$ as in~\eqref{eqn:direction_D}. We have:
\begin{enumerate}
	\item[\em{(1)}] $\gamma\big(x,k(a)+x\big)=\gamma\big(y,k(b)+y\big)$ if and only if $a=b$ and $(x,y)\in \mathrm{Eq}(f)$;
	\item[\em{(2)}] if $(x,y)\in \mathrm{Eq}(f),$ then $\gamma\big(x,k(a)+x\big)=\gamma\big(y,k(a)+y\big)$;
\end{enumerate}
hence in particular
\begin{enumerate}
	\item[\em{(3)}] $\gamma\big(x,k(a)+x\big)=\gamma\big(x,k(b)+x\big)$ if and only if $a=b.$
\end{enumerate}
\end{corollary}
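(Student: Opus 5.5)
The plan is to read all three items off the preceding Lemma, which identifies the kernel pair relation $\mathcal{E}q(\gamma)$ with the congruence $\tau$. Recall that $\tau$ relates $\big(x,k(a)+x\big)$ and $\big(y,k(b)+y\big)$ exactly when $a=b$ and $(x,y)\in\mathrm{Eq}(f)$.

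For (1), I will use the defining property of the kernel pair. Since $\mathcal{E}q(\gamma)$ is the kernel pair of $\gamma$, two elements of $\R{E}$ have the same image under $\gamma$ if and only if the pair they form lies in $\mathrm{Eq}(\gamma)$. By the Lemma, this is the same as the pair lying in $T$. Unwinding the definition of $T$ then gives exactly the condition $a=b$ and $(x,y)\in\mathrm{Eq}(f)$.

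The one point needing care is well-definedness: the elements $a$ and $b$ must be intrinsic to the pairs and not depend on how the pairs are written. This holds because, by Lemma~\ref{lemma:unique_a}, $a$ is uniquely determined by the pair $\big(x,k(a)+x\big)\in\R{E}$. This uniqueness uses the cancellativity of $K$, and it is the only subtle ingredient here.

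Item (2) is simply the ``if'' direction of (1), taking $b=a$.

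For (3), I specialise (1) to $y=x$. Since $(x,x)\in\mathrm{Eq}(f)$ always holds, the condition in (1) reduces to $a=b$.

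I expect no real obstacle. All the substance lies in the preceding Lemma, in particular in the computation showing $\tau\subseteq\mathcal{E}q(\gamma)$, which uses \eqref{eqn:equal gammas} and the commutativity of $K$. That Lemma is taken as given here.
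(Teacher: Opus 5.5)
Your proposal is correct and follows the paper's route: the paper states this corollary without a separate proof, as an immediate consequence of the preceding Lemma identifying the kernel pair $\mathcal{E}q(\gamma)$ with the congruence $\tau$, and (2) and (3) are the special cases $b=a$ and $y=x$ of (1). Your remark that Lemma~\ref{lemma:unique_a} (hence the cancellativity of $K$) makes $a$ and $b$ intrinsic to the pairs is the same well-definedness point the paper invokes at the start of that Lemma's proof.
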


\begin{proposition}
\label{prop:df_schreier}
Given a cc-Schreier extension $\E$ as above, the split extension
\begin{equation}
\label{eqn:Schreier point overline f}
\xymatrix{{K} \ar@{>->}[r]_-{\overline{\kappa}} & {df} \ar@/_1pc/@{-->}[l]_-{\overline{q}} \ar@<.5ex>@{>>}[r]^-{\overline{f}} &M, \ar@<.5ex>[l]^-{\overline{s}}}
\end{equation}
where $\overline{\kappa}(a)=\gamma\big(0,k(a)\big),$ is a Schreier point with Schreier retraction defined by $\overline{q}(\gamma\big(x,k(a)+x\big))=a.$
\end{proposition}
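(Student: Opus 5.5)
We need to check four things: $\overline{\kappa}$ is a monoid homomorphism which is a kernel of $\overline{f}$; $\overline{s}$ is a homomorphic section of $\overline{f}$ (already recorded after \eqref{eqn: overline f and s}); the map $\overline{q}$ is well defined; and every element of $df$ decomposes uniquely as $\overline{\kappa}\overline{q}(\cdot)+\overline{s}\overline{f}(\cdot)$. The tool throughout is Corollary~\ref{cor:a_equals_b}, which describes the kernel pair of $\gamma$ explicitly, together with the surjectivity of $\gamma$.

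First, $\overline{\kappa}$ is the composite of $\gamma$ with the homomorphism $K\to\R{E}$, $a\mapsto(0,k(a))$, so it is a monoid homomorphism. By Corollary~\ref{cor:a_equals_b}(3) it is injective. Its image lies in $\overline{f}^{-1}(1)$, since $\overline{f}\gamma(0,k(a))=f(0)=1$. Conversely, any element of $df$ is of the form $\gamma(x,k(a)+x)$ because $\gamma$ is surjective. If this element lies in the kernel of $\overline{f}$, then $f(x)=1$, so $(x,0)\in\mathrm{Eq}(f)$, and Corollary~\ref{cor:a_equals_b}(2) gives $\gamma(x,k(a)+x)=\gamma(0,k(a))=\overline{\kappa}(a)$. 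Hence $(K,\overline{\kappa})$ is a kernel of $\overline{f}$, since in $\mathbf{Mon}$ kernels are exactly the injective maps onto the preimage of the unit. Well-definedness of $\overline{q}$ is exactly the "only if" direction of Corollary~\ref{cor:a_equals_b}(1): equal $\gamma$-images force equal $a$.

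Next comes the decomposition. Take $\xi=\gamma(x,k(a)+x)$. We compute
\[
\overline{\kappa}(a)+\overline{s}\overline{f}(\xi)=\gamma(0,k(a))+\gamma(x,x)=\gamma(x,k(a)+x)=\xi,
\]
using $\overline{s}(f(x))=\gamma(x,x)$ from \eqref{eqn: overline f and s} and the fact that $\gamma$ is a homomorphism. For uniqueness, suppose $\xi=\overline{\kappa}(b)+\overline{s}\overline{f}(\xi)$. Then the same computation gives $\xi=\gamma(x,k(b)+x)$, and Corollary~\ref{cor:a_equals_b}(3) yields $b=a=\overline{q}(\xi)$. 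So $\overline{q}$ is the unique Schreier retraction, and \eqref{eqn:Schreier point overline f} is a Schreier point.

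The only real content is the description of $\mathcal{E}q(\gamma)$, which is already available in Corollary~\ref{cor:a_equals_b}. The step needing the most care is the kernel identification, in particular the choice of $0$ as the comparison element in the fibre over $1$ when invoking Corollary~\ref{cor:a_equals_b}(2).
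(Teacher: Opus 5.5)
Your proof is correct and follows the paper's argument essentially step by step. Injectivity of $\overline{\kappa}$ and uniqueness of the decomposition come from Corollary~\ref{cor:a_equals_b}(3), the kernel identification comes from Corollary~\ref{cor:a_equals_b}(2) with $0$ as the comparison element over $1$, and well-definedness of $\overline{q}$ comes from Corollary~\ref{cor:a_equals_b}(1). The existence of the decomposition $\gamma(x,k(a)+x)=\gamma(0,k(a))+\gamma(x,x)$ follows because $\gamma$ is a homomorphism and $\overline{s}f(x)=\gamma(x,x)$.
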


\begin{proof}
It follows from Corollary~\ref{cor:a_equals_b}(3) that $\overline{\kappa}$ in a monomorphism. Moreover, it is easy to check that $\overline{f}\overline{\kappa}=0,$ and if $\gamma(x,k(a)+x)\in df$ is such that $\overline{f}(\gamma(x,k(a)+x))=1$ then $f(x)=1=f(0)$ (see~\eqref{eqn: overline f and s}): thus, by Corollary~\ref{cor:a_equals_b}(2), we have $\gamma(x,k(a)+x)=\gamma(0,k(a))=\overline{\kappa}(a).$

The function $\overline{q}$ is well defined by Corollary~\ref{cor:a_equals_b}(1). For every $\big(x,k(a)+x\big)\in\R{E}$ there is a unique decomposition $\big(x,k(a)+x\big)=\big(0,k(a)\big)+\big(x,x\big),$ so that $\gamma\big(x,k(a)+x\big)=\gamma\big(0,k(a)\big)+\gamma\big(x,x\big)=\overline{\kappa}(a)+\overline{s}\overline{f}\big(\gamma\big(x,k(a)+x\big)\big)$ (using the fact that $\gamma$ is a monoid homomorphism and~\eqref{eqn: overline f and s}). The uniqueness of $a$ in $\gamma\big(x,k(a)+x\big)=\overline{\kappa}(a)+\overline{s}\overline{f}\big(\gamma\big(x,k(a)+x\big)\big)$ is the content of Corollary~\ref{cor:a_equals_b}(3).
\end{proof}

Denote by $SPt_M$ the category having as objects all Schreier points~\eqref{eqn:schreier_point}, with $M$ fixed (we shall say that~\eqref{eqn:schreier_point} is a Schreier point \emph{on M}), and as morphisms the morphisms of split exact sequences
\[
\xymatrix{
\mathbb{S}: {K} \ar@<2ex>[d]_-{\lambda_1} \ar@{>->}[r]^-{k} &{B} \ar@/_1pc/@{-->}[l]_-{q} \ar[d]_-{\lambda} \ar@<.5ex>@{>>}[r]^-{f} &M \ar@{=}[d] \ar@<.5ex>[l]^-{s} \\
\mathbb{S}': {K^\prime} \ar@{>->}[r]_-{k^\prime} &{B^\prime} \ar@/_1pc/@{-->}[l]_-{q^\prime} \ar@<.5ex>@{>>}[r]^-{f^\prime} &{M} \ar@<.5ex>[l]^-{s^\prime}
}
\]
in $\mathbf{Mon},$ i.e., $\lambda_1,\lambda$ are monoid homomorphisms such that $k^\prime\lambda_1=\lambda k,$ $f^\prime\lambda=f$ and $\lambda s=s^\prime.$ As observed in \cite[Proposition 2.3.1]{schreier_book}, it follows that $\lambda_1q=q^\prime \lambda,$ namely the morphism $(\lambda_1,\lambda)$ preserves the Schreier retraction. As in the case of the morphisms in $SExt_M,$ we shall denote these as $(\lambda_1,\lambda)\colon\mathbb{S} \to \mathbb{S}',$ or simply as $\lambda\colon \mathbb{S}\to \mathbb{S}'.$

By the previous proposition, $(\overline{f},\overline{s})$ is an object in $SPt_M$ and $\overline{f}$ has a commutative and cancellative kernel $K;$ the next result allows us to understand how these objects relate to the Schreier extensions~\eqref{eqn:ses}.

Denote by $c\text{-}SExt_M$ (resp., $cc\text{-}SExt_M$) the full subcategory of $SExt_M$ of all Schreier extensions with codomain $M$ and commutative (resp., commutative and cancellative) kernel, and similarly by $c\text{-}SPt_M$ (resp., $cc\text{-}SPt_M$) the full subcategory of $SPt_M$ whose objects are the Schreier points on $M$ having a commutative (resp., commutative and cancellative) kernel.

Now, recall the following classical definition:
\begin{definition}
\label{dfn:int_comm_monoid}
Given a category $\mathcal{C}$ with finite products, we say that an object $X$ of $\mathcal{C}$ has an \emph{internal commutative monoid structure} in $\mathcal{C}$ when there exist morphisms $\omega\colon X\times X \to X$ (called the \emph{multiplication}) and $\varepsilon\colon 1\to X$ (called the \emph{unit}) such that
\[
	\begin{array}{ll}
		\omega(\omega\times id_X)=\omega(id_X\times \omega) & \text{associativity axiom}, \\
		\omega\la p_2,p_1\ra =\omega & \text{commutativity axiom}, \\
		\omega\la \varepsilon!_X,id_X\ra=id_X & \text{unit axiom}.
\end{array}
\]
(Here $!_X$ stands for the terminal morphism $X\rightarrow 1.$)

A morphism of internal commutative monoids between two internal commutative monoids $(X,\omega,\varepsilon)$ and $(X',\omega',\varepsilon')$ is a morphism $f\colon X\to X'$ in $\mathcal{C}$ such that $f\varepsilon =\varepsilon'$ ($f$ preserves the unit) and $f\omega=\omega'(f\times f)$ ($f$ preserves the multiplication). We denote by $\mathbf{CMon}(\mathcal{C})$ the category of internal commutative monoids in $\mathcal{C}.$
\end{definition}
Then we have:
\begin{theorem}
\label{thm:internal_monoids}
There are equivalences of categories $c\text{-}SPt_M$ $\cong$ $\mathbf{CMon}(SExt_M)$ $\cong$ $\mathbf{CMon}(c\text{-}SExt_M).$ When the cancellativity of the kernel is considered, this equivalence restricts to an equivalence $cc\text{-}SPt_M$ $\cong$ $\mathbf{CMon}(cc\text{-}SExt_M).$
\end{theorem}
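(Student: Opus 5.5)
The plan is to make the correspondence explicit in both directions. An internal commutative monoid $(\mathbb{E},\omega,\varepsilon)$ in $SExt_M$ will be sent to the Schreier point $(f,s)$, where $s$ is essentially $\varepsilon$. A Schreier point with commutative kernel will be sent to its underlying extension, equipped with a canonical "fibrewise addition".

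\textbf{Finite products in $SExt_M$.} The terminal object is $0\rightarrowtail M\stackrel{=}{\twoheadrightarrow}M$, which is Schreier with $B_m=\{m\}$ by Example~\ref{ex:schreier_extensions}.2. The product of $\mathbb{E}$ and $\mathbb{E}'$ should be the pullback $X\times_MX'$, with kernel $K\times K'$. I would check that it is Schreier with representatives $(u_m,u'_m)$: the decomposition $(x,x')=(kq(x),k'q'(x'))+(u_m,u'_m)$ is unique by Lemma~\ref{lemma:rep_invertible}(1). The projections preserve representatives. For the universal property I must check that the induced map $\langle\alpha,\beta\rangle$ sends representatives to representatives; by Proposition~\ref{prop:rep_pres} it suffices to test one representative per $m$, and for this I expect a short direct computation with the unique decompositions in each factor. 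I regard this step, together with the homomorphism check below, as the main technical point. Once it is in place, $c\text{-}SExt_M$ is a full subcategory of $SExt_M$ closed under finite products, and so is $cc\text{-}SExt_M$.

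\textbf{From internal monoids to Schreier points.} A morphism $\varepsilon$ from the terminal object to $\mathbb{E}$ is exactly a monoid homomorphism $s\colon M\to X$ with $fs=id_M$ and $s(m)\in B_m$. By Remark~\ref{rmk:Shreier exts vs Schreier points}, $(f,s)$ is then a Schreier point. Next I restrict $\omega$ to kernels and obtain a homomorphism $\omega_K\colon K\times K\to K$. The unit axiom, applied to elements $(a,0)$ and $(0,b)$ of the fibre over $1$, gives $\omega_K(a,0)=a$ and, using commutativity of $\omega$, $\omega_K(0,b)=b$. An Eckmann--Hilton argument then yields
\[\omega_K(a,b)=\omega_K\big((a,0)+(0,b)\big)=a+b,\qquad \omega_K(a,b)=\omega_K\big((0,b)+(a,0)\big)=b+a.\]
Hence $K$ is commutative and $\omega_K=+$. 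Moreover, $\omega$ is completely determined by $s$. Writing $(x,y)=(kq(x),kq(y))+(s(m),s(m))$ with $m=f(x)=f(y)$, the unit axiom gives $\omega(s(m),s(m))=s(m)$. Therefore
\[
\omega(x,y)=k\big(q(x)+q(y)\big)+sf(x).
\]
In particular, every object of $\mathbf{CMon}(SExt_M)$ already lies in $c\text{-}SExt_M$, which yields $\mathbf{CMon}(SExt_M)=\mathbf{CMon}(c\text{-}SExt_M)$.

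\textbf{From Schreier points to internal monoids, and morphisms.} Conversely, given a Schreier point in $c\text{-}SPt_M$, I identify $B$ with the semidirect product $K\rtimes M$ via Proposition~\ref{prop:actions}, whose operation is $(a,m)+(b,n)=(a+m\bullet b,mn)$. Then $B\times_MB\cong\{(a,b,m)\}$, and I define $\omega(a,b,m)=(a+b,m)$, which is the formula above. This is a homomorphism because each $m\bullet-$ is an endomorphism and $K$ is commutative:
\[
(a+m\bullet a')+(b+m\bullet b')=(a+b)+m\bullet(a'+b').
\]
It preserves representatives, by Proposition~\ref{prop:rep_pres}, since $(0,0,m)\mapsto(0,m)=s(m)$. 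Associativity, commutativity and the unit axiom reduce to the corresponding properties of $(K,+,0)$. Setting $\varepsilon=s$ completes the structure.

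For morphisms, an internal monoid morphism preserves the unit, i.e.\ $\lambda s=s'$, which is exactly a morphism in $SPt_M$. Conversely, a morphism of Schreier points satisfies $\lambda_1q=q'\lambda$ by \cite[Proposition 2.3.1]{schreier_book}. Combined with $\lambda s=s'$ and the explicit formula for $\omega$, this shows that $\lambda$ preserves $\omega$. Both constructions are functorial and mutually inverse up to the canonical isomorphism $B\cong K\rtimes M$, which gives $c\text{-}SPt_M\cong\mathbf{CMon}(SExt_M)$. Since the kernel is unchanged throughout, the equivalence restricts to $cc\text{-}SPt_M\cong\mathbf{CMon}(cc\text{-}SExt_M)$, using the closure of $cc\text{-}SExt_M$ under products.
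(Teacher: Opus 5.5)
Your proposal is correct and follows essentially the same route as the paper. The unit gives the section $s$ (hence a Schreier point), Eckmann--Hilton on the kernel component forces $K$ commutative and $\omega_1=+$, and the unit axiom then pins down $\omega(x,y)=k(q(x)+q(y))+sf(x)$, which is the paper's $kq(x)+y$; morphisms are handled through $\lambda s=s'$ and $\lambda_1q=q'\lambda$. The differences are only technical conveniences: you check that $\omega$ is a homomorphism in semidirect-product coordinates $K\rtimes_\sigma M$, and you check preservation of representatives via Proposition~\ref{prop:rep_pres} on $(s(m),s(m))$, whereas the paper computes directly with Lemma~\ref{lemma:original} and arbitrary pairs of representatives. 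Also, the product universal property you flag as the main technical point is immediate, since any pair $(\alpha(w),\beta(w))$ of representatives is already a representative of the product.
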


In order to prove the theorem, we shall need the following result, which appears as Proposition $2.1.5$ in \cite{schreier_book} and which is the version for Schreier points of our Lemma~\ref{prop:2.1.5}:
\begin{lemma}
\label{lemma:original}
Given a Schreier point $\xymatrix{{K} \ar@{>->}[r]_-k &B \ar@/_1pc/@{-->}[l]_-{q} \ar@<.5ex>@{>>}[r]^-f &M \ar@<.5ex>[l]^-s}$ with Schreier retraction $q,$ the following properties hold true:
\begin{enumerate}
	\item[\em{(1)}] $qk=id_K;$
	\item[\em{(2)}] $qs=0;$
	\item[\em{(3)}] $kq\big(s(m)+k(a)\big)+s(m)=s(m)+k(a)$ for all $m\in M$ and $a\in K;$
	\item[\em{(4)}] $q(b+b^\prime)=q(b)+q\big(sf(b)+kq(b^\prime)\big)$ for all $b,b^\prime\in B.$
\end{enumerate}
\end{lemma}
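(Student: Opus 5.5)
The plan is to derive all four properties from the single defining identity~\eqref{eqn:equality for Schreier point}, $b=kq(b)+sf(b)$, together with the \emph{uniqueness} of the Schreier retraction. Uniqueness of the map $q$ amounts to this: for each $b\in B$, $q(b)$ is the only element $a\in K$ with $b=k(a)+sf(b)$. Indeed, if for some $b_0$ there were $a\neq q(b_0)$ with $b_0=k(a)+sf(b_0)$, then redefining $q$ at $b_0$ alone would give a second map satisfying~\eqref{eqn:equality for Schreier point}. So each item will be proved by exhibiting an element $b$ in the form $k(a)+sf(b)$ and reading off $q(b)=a$. This is the same strategy as in Lemma~\ref{prop:2.1.5}, with the representatives $u_m$ replaced by the chosen ones $s(m)$.

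For (1), take $b=k(a)$. Then $f(b)=1$, and $s(1)=0$ because $s$ is a monoid homomorphism. Hence $k(a)=k(a)+sf(k(a))$, and uniqueness gives $qk(a)=a$.

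For (2), take $b=s(m)$. Then $fs(m)=m$, so $s(m)=k(0)+sf(s(m))$, and uniqueness gives $qs(m)=0$.

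For (3), apply~\eqref{eqn:equality for Schreier point} to $b=s(m)+k(a)$. Since $fk=0$ and $fs=id_M$, we have $f(b)=m$, so $sf(b)=s(m)$, which is exactly the stated equality.

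Item (4) is the only one requiring a short computation, and I expect it to be the main (though mild) obstacle, mainly because one must not use commutativity of $K$, which is not assumed. First decompose $b=kq(b)+sf(b)$ and $b'=kq(b')+sf(b')$. Next, apply (3) with $m=f(b)$ and $a=q(b')$ to rewrite the middle term:
\[
sf(b)+kq(b')=kq\big(sf(b)+kq(b')\big)+sf(b).
\]
Substituting this, and using that $k$ and $sf$ are monoid homomorphisms, gives
\[
b+b'=k\Big(q(b)+q\big(sf(b)+kq(b')\big)\Big)+sf(b+b').
\]
Uniqueness of the decomposition then yields $q(b+b')=q(b)+q\big(sf(b)+kq(b')\big)$. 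In this computation the summands of $K$ keep their order, so no commutativity of $K$ is needed.
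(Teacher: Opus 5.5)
Your proof is correct: each item follows from the pointwise uniqueness of the decomposition $b=kq(b)+sf(b)$, which you rightly extract from the uniqueness of the map $q$, and your computation for (4) indeed needs no commutativity of $K$. The paper gives no proof of its own here and defers to \cite[Proposition 2.1.5]{schreier_book}; your argument is the same uniqueness-of-decomposition strategy the paper uses for the analogous Lemma~\ref{prop:2.1.5}.
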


\begin{proof}[Proof of Theorem~\ref{thm:internal_monoids}]
Let $\xymatrix{{K} \ar@{>->}[r]_-k &B \ar@/_1pc/@{-->}[l]_-{q} \ar@<.5ex>@{>>}[r]^-f &M \ar@<.5ex>[l]^-s}$ be a Schreier point on $M$ with $K$ commutative. We want to prove that there is a natural commutative monoid structure on the extension $\xymatrix{{\mathbb{E}:K} \ar@{>->}[r]^-k &B \ar@{>>}[r]^-f &M}$ in $(c\text{-})SExt_M.$ The product $\mathbb{E}\times\mathbb{E}$ in $(c\text{-})SExt_M$ can be realised by
\[
\xymatrixcolsep{3pc}
\xymatrix{
\mathbb{E}\times \mathbb{E}: {K\times K} \ar@<3.5ex>[d]_-{p_1} \ar@<4.5ex>[d]^-{p_2} \ar@{>->}[r]^-{k\times_M k} &{\mathrm{Eq}(f)} \ar@<-.5ex>[d]_-{f_1} \ar@<.5ex>[d]^-{f_2} \ar@{>>}[r]^-{ff_1=ff_2} &M \ar@{=}[d] \\
\;\;\;\;\;\;\mathbb{E}:{K} \ar@{>->}[r]_-k &B \ar@{>>}[r]_-f &{M,}
}
\]
where $\big(\mathrm{Eq}(f),f_1,f_2\big)$ is a kernel pair of $f$ and $p_1,p_2$ are the obvious projections. It is easy to check that $\mathbb{E}\times\mathbb{E}$ is indeed a Schreier extension, with representatives of the type $(u_m,v_m)\in B_m\times B_m,$ for every $m\in M$ (here $B_m$ denotes the set of representatives of $m$ for $\mathbb{E},$ as usual).

We define a map $\mu\colon\mathrm{Eq}(f)\longrightarrow B$ by $\mu(x,y)=kq(x)+y.$ It is a monoid homomorphism, because $\mu(0,0)=0,$ $\mu(x, y)+\mu(x^\prime,y^\prime)=kq(x)+y+kq(x^\prime)+y^\prime,$ and
\begin{equation*}
\begin{split}
\mu\big((x,y)+(x^\prime,y^\prime)\big)&=\mu\big(x+x^\prime,y+y^\prime\big)\\
&=kq(x+x^\prime)+y+y^\prime\\
&=kq(x)+kq\big(sf(x)+kq(x^\prime)\big)+kq(y)+sf(y)+y^\prime \\
&=kq(x)+kq(y)+kq\big(sf(y)+kq(x^\prime)\big)+sf(y)+y^\prime\\
&=kq(x)+kq(y)+sf(y)+kq(x^\prime)+y^\prime\\
&=kq(x)+y+kq(x^\prime)+y^\prime,
\end{split}
\end{equation*}
using Lemma~\ref{lemma:original}(4), Lemma~\ref{lemma:original}(3), \eqref{eqn:equality for Schreier point} and the commutativity of $K.$

We claim that
\begin{equation}
\begin{aligned}
\label{eqn:mu}
\xymatrixcolsep{3pc}
\xymatrix{
\mathbb{E}\times \mathbb{E}: {K\times K} \ar@{}[rd]|{\mathrm{(a)}} \ar@<4ex>[d]_-{+} \ar@{>->}[r]^-{k\times_M k} &{\mathrm{Eq}(f)} \ar@{}[rd]|{\mathrm{(b)}} \ar[d]^-{\mu} \ar@{>>}[r]^-{ff_1=ff_2} &M \ar@{=}[d] \\
\;\;\;\;\;\;\; \mathbb{E}: {K} \ar@{>->}[r]_-k &B \ar@{>>}[r]_-f &{M,}
}
\end{aligned}
\end{equation}
where $+$ denotes the monoid operation on $K$ (which is a monoid homomorphism because $K$ is commutative), is a morphism of Schreier extensions. The commutativity of $\mathrm{(b)}$ is immediate by the definition of $\mu,$ and $\mathrm{(a)}$ commutes by Lemma~\ref{lemma:original}(1). To prove that $\mu$ preserves the representatives, let $(u_m,v_m)\in B_m\times B_m$ be a representative of $m\in M$ for $\E\times \E.$ We want to show that $\mu(u_m,v_m)=kq(u_m)+v_m\in B_m,$ i.e. that for any $b\in f^{-1}(m)$ one can write $b=k(a)+kq(u_m)+v_m$ for a unique $a\in K.$ As $u_m$ and $v_m$ are representatives of $m,$ we have $b=k(a^\prime)+u_m$ and $s(m)=k(a^{\prime\prime})+v_m$ for unique elements $a^\prime, a^{\prime\prime}\in K,$ so that
\begin{equation*}
\begin{split}
b = k(a^\prime)+u_m& \stackrel{\eqref{eqn:equality for Schreier point}}{=} k(a^\prime)+kq(u_m)+s(m)\\
&=k(a^\prime)+kq(u_m)+k(a^{\prime\prime})+v_m\\
&=k(a^\prime+a^{\prime\prime})+kq(u_m)+v_m \ \ (K \text{ is commutative})\\
&=k(a)+kq(u_m)+v_m
\end{split}
\end{equation*}
with $a=a^\prime+a^{\prime\prime}.$ Eventually, if $k(a)+kq(u_m)+v_m=k(\overline{a})+kq(u_m)+v_m$ for some other $\overline{a}\in K,$ then $a+q(u_m)=\overline{a}+q(u_m)$ by Lemma~\ref{lemma:rep_invertible}(1), and hence $a=\overline{a}$ because $q(u_m)$ is invertible in $K,$ by Lemma~\ref{lemma:rep_invertible}(2) (as $u_m=kq(u_m)+s(m)$ and both $u_m$ and $s(m)$ are representatives of $m$).

We now prove that the morphism $\mu\colon \E\times \E\to \E$ gives a multiplication on $\E$ in $(c\text{-})SExt_M.$ First, $\mu$ is associative and commutative. Indeed, denote $\mu(x,y)=x\cdot y$ for all $(x,y)\in\mathrm{Eq}(f);$ then we have:
\begin{itemize}
\item \emph{Associativity}: $x\cdot(y\cdot z)=kq(x)+\big(kq(y)+z\big)$ and $(x\cdot y)\cdot z=kq\big(kq(x)+y\big)+z=\big(kq(x)+kq(y)\big)+z$ (by Lemma~\ref{lemma:original});
\item \emph{Commutativity}: $x\cdot y=kq(x)+y \stackrel{\eqref{eqn:equality for Schreier point}}{=} kq(x)+kq(y)+sf(y) = kq(y)+kq(x)+sf(x) \stackrel{\eqref{eqn:equality for Schreier point}}{=} kq(y)+x=y\cdot x$ (by the commutativity of $K$ and the fact that $f(x)=f(y)$).
\end{itemize}

The unit on the extension $\E$ is given by $s\colon \mathds{1}\to \E$
\begin{equation}
\begin{aligned}
\label{eqn:unit}
\xymatrix{
\mathds{1}: {0} \ar@<1.8ex>[d]_-{0_K} \ar@{>->}[r] &{M} \ar[d]^-s \ar@{=}[r] &M \ar@{=}[d] \\
\E: {K} \ar@{>->}[r]_-k &B \ar@{>>}[r]_-f &{M}
}
\end{aligned}
\end{equation}
(observe that $\xymatrix{{0} \ar@{>->}[r] &{M} \ar@{=}[r] &M}$ is a terminal object in $(c\text{-})SExt_M$ and that $s$ preserves the representatives $\{m\}$ of $m$ for $\mathds{1}$; cf. Example~\ref{ex:schreier_extensions}.2).

Indeed, the section $s$ of $f$ acts as a neutral element for $\mu$: for any $b\in B,$ $\mu\la sf,id_B\ra(b)=\mu(sf(b),b)=kqsf(b)+b=0+b=b,$ by applying Lemma~\ref{lemma:original} (see the unit axiom in Definition~\ref{dfn:int_comm_monoid}). 

We conclude that $\mathbb{E}$ is an internal commutative monoid in $(c\text{-})SExt_M$ with multiplication~\eqref{eqn:mu} and unit~\eqref{eqn:unit}.

The functoriality of this construction is readily established by observing that if
\[
\xymatrix{
{K} \ar[d]_-{\lambda_1} \ar@{>->}[r]_-{k} &{B} \ar@/_1pc/@{-->}[l]_-{q} \ar[d]_-{\lambda} \ar@<.5ex>@{>>}[r]^-{f} &M \ar@{=}[d] \ar@<.5ex>[l]^-{s} \\
{K^\prime} \ar@{>->}[r]_-{k^\prime} &{B^\prime} \ar@/_1pc/@{-->}[l]_-{q^\prime} \ar@<.5ex>@{>>}[r]^-{f^\prime} &{M} \ar@<.5ex>[l]^-{s^\prime}
}
\]
is a morphism of Schreier points, then
\[
\xymatrix{
{K} \ar[d]_-{\lambda_1} \ar@{>->}[r]^-{k} &{B} \ar[d]_-{\lambda} \ar@{>>}[r]^-{f} &M \ar@{=}[d] \\
{K^\prime} \ar@{>->}[r]_-{k^\prime} &{B^\prime} \ar@{>>}[r]_-{f^\prime} &{M}
}
\]
is a morphism of Schreier extensions (by Proposition~\ref{prop:rep_pres}, since $\lambda s=s^\prime$), which is a morphism in $\mathbf{CMon}((c\text{-})SExt_M)$ with respect to the monoid structures that we have defined above. Indeed, $\lambda s=s^\prime,$ so that the unit is preserved, and for all $(x,y)\in\mathrm{Eq}(f)$
\begin{equation*}
\begin{split}
\lambda\big(\mu(x,y)\big)&=\lambda\big(kq(x)+y\big)\\
&=k^\prime \lambda_1q(x)+\lambda(y)\\
&=k^\prime q^\prime\lambda(x)+\lambda(y)\\
&=\mu^\prime\big(\lambda(x),\lambda(y)\big) \\
&=\mu^\prime (\lambda\times_M \lambda)(x,y),
\end{split}
\end{equation*}
so that the multiplication is preserved; cf.  Definition~\ref{dfn:int_comm_monoid}.

Conversely, suppose that $\xymatrix{{\mathbb{E}:K} \ar@{>->}[r]^-k &B \ar[r]^-f &M}$ is an internal commutative monoid in $SExt_M$ with multiplication $\omega\colon \E\times \E\to \E$
\[
\xymatrixcolsep{3pc}
\xymatrix{
\E\times \E: {K\times K} \ar@<4ex>[d]_-{\omega_1} \ar@{>->}[r]^-{k\times_M k} &{\mathrm{Eq}(f)} \ar[d]_-{\omega} \ar@{>>}[r]^-{ff_1=ff_2} &M \ar@{=}[d] \\
\;\;\;\;\;\;\E: {K} \ar@{>->}[r]_-k &B \ar@{>>}[r]_-f &{M}
}
\]
and unit $s\colon \mathds{1}\to \E$
\[
\xymatrix{
\mathds{1}: {0} \ar@<1.8ex>[d] \ar@{>->}[r] &{M} \ar[d]_-s \ar@{=}[r] &M \ar@{=}[d] \\
\E: {K} \ar@{>->}[r]_-k &B \ar@{>>}[r]_-f &{M.}
}
\]
Then, as $fs=id_M$ and $s$ preserves the representatives $\{m\}$ of $m$ for $\mathds{1},$ the split extension $\xymatrix{{K} \ar@{>->}[r]^-k &B \ar@<.5ex>@{>>}[r]^-f &M \ar@<.5ex>[l]^-s}$ is a Schreier point on $M$ with Schreier retraction $q\colon B \dashrightarrow K$ defined by $q(b)=q_{f(b),sf(b)}(b),$ for all $b\in B$ (see Remark~\ref{rmk:Shreier exts vs Schreier points}). Moreover, for all $a\in K$ we have $\omega\big(0,k(a)\big)=\omega\big(sfk(a),k(a)\big)=k(a)$ by the unit axiom of Definition~\ref{dfn:int_comm_monoid}. The commutativity of $\omega$ gives $\omega\big(k(a),0\big)=\omega\big(0,k(a)\big)=k(a).$ We deduce that $\omega_1(0,a)=\omega_1(a,0)=a.$ Then, as $\omega_1$ is a monoid homomorphism, by the Eckmann-Hilton argument we conclude that $\omega_1=+,$ the monoid operation on $K,$ which is thus commutative; consequently, $\E$ is an object of $c\text{-}SExt_M.$ If $(x,y)\in\mathrm{Eq}(f),$ $f(x)=f(y)=m,$ we have
\begin{equation*}
\begin{split}
\omega(x,y)&=\omega\big(kq(x)+s(m),y\big)\\
&=\omega\Big(\big(kq(x),0\big)+\big(s(m),y\big)\Big)\\
&=\omega\big(kq(x),0\big)+\omega\big(s(m),y\big)\\
&=kq(x)+\omega\big(sf(y),y\big)\\
&=kq(x)+y\\
&=\mu(x,y);
\end{split}
\end{equation*}
here we used~\eqref{eqn:equality for Schreier point} and the unit axiom of Definition~\ref{dfn:int_comm_monoid}. 

This proves that the above given multiplication $\omega\colon \E\times \E \to \E$ coincides with the deduced one in~\eqref{eqn:mu}.

Eventually, it is clear that when the cancellativity of the kernels is also considered, this equivalence restricts to an equivalence $cc\text{-}SPt_M\cong\mathbf{CMon}(cc\text{-}SExt_M).$
\end{proof}

\begin{remark}
Observe that if $\xymatrix{{K} \ar@{>->}[r]_-k &B \ar@/_1pc/@{-->}[l]_-{q} \ar@<.5ex>@{>>}[r]^-f &M \ar@<.5ex>[l]^-s}$ is a Schreier point on $M$ with $K$ commutative and cancellative, the internal multiplication $\mu(x,y)=kq(x)+y$ defined on the cc-Schreier extension $\xymatrix{{\mathbb{E}:K} \ar@{>->}[r]^-k &B \ar@{>>}[r]^-f &M}$ given in Theorem~\ref{thm:internal_monoids} can be expressed in terms of the connector $p\colon\P{E}\longrightarrow X$ as
\[
\mu(x,y)=p\big(y,kq(y)+y,kq(x)+kq(y)+y\big)
\]
(see~\eqref{eqn: def of p_E}).
\end{remark}

\begin{corollary}
Given the Schreier point~\eqref{eqn:Schreier point overline f}, there results a natural internal commutative monoid structure on the Schreier extension $\overline{\E}: \xymatrix{K \ar@{>->}[r]^-{\overline{\kappa}} &{df} \ar@{>>}[r]^-{\overline{f}} &{M}}$ as in~\eqref{eqn:Schreier point overline f}, whose unit is $\overline{s}\colon \mathds{1}\to \overline{\E}$ and whose multiplication is the morphism $\overline{\mu}\colon \overline{\E}\times \overline{\E} \to \overline{\E},$ where $\overline{\mu}\colon \mathrm{Eq}(\overline{f})\longrightarrow df$ is given by
\begin{equation}
\label{eqn:overline_mu}
\overline{\mu}\big(\gamma(x,k(a)+x),\gamma(y,k(b)+y)\big)=\gamma\big(x,k(a)+k(b)+x\big) (=\gamma\big(y,k(a)+k(b)+y\big)).
\end{equation}
\end{corollary}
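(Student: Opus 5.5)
The plan is to apply Theorem~\ref{thm:internal_monoids} directly to the Schreier point~\eqref{eqn:Schreier point overline f} of Proposition~\ref{prop:df_schreier}. Its kernel is $K$, which is commutative and cancellative, so the theorem gives an internal commutative monoid structure on $\overline{\E}$ in $cc\text{-}SExt_M$. The unit is the section $\overline{s}\colon\mathds{1}\to\overline{\E}$, and the multiplication is $\overline{\mu}(\xi,\zeta)=\overline{\kappa}\,\overline{q}(\xi)+\zeta$ for $(\xi,\zeta)\in\mathrm{Eq}(\overline{f})$. Naturality is the functoriality already established in that proof. What remains is to compute $\overline{\mu}$ explicitly and check that it agrees with~\eqref{eqn:overline_mu}.

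First, an element of $\mathrm{Eq}(\overline{f})$ has the form $\big(\gamma(x,k(a)+x),\gamma(y,k(b)+y)\big)$ with $f(x)=\overline{f}\gamma(x,k(a)+x)=\overline{f}\gamma(y,k(b)+y)=f(y)$ by~\eqref{eqn: overline f and s}, so $(x,y)\in\mathrm{Eq}(f)$.

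Next, we compute $\overline{\mu}$. By Proposition~\ref{prop:df_schreier} we have $\overline{q}\big(\gamma(x,k(a)+x)\big)=a$, so
\[
\overline{\mu}=\overline{\kappa}(a)+\gamma\big(y,k(b)+y\big)=\gamma\big(0,k(a)\big)+\gamma\big(y,k(b)+y\big)=\gamma\big(y,k(a)+k(b)+y\big),
\]
using that $\gamma$ is a monoid homomorphism and that addition in $\R{E}$ is componentwise. This is the second expression in~\eqref{eqn:overline_mu}. Finally, Corollary~\ref{cor:a_equals_b}(2) applied to $(x,y)\in\mathrm{Eq}(f)$ gives $\gamma\big(y,k(a)+k(b)+y\big)=\gamma\big(x,k(a)+k(b)+x\big)$, which is the first expression.

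There is no serious obstacle. The only point needing care is the bookkeeping of the representation: the formula must not depend on the chosen preimages under $\gamma$. This is guaranteed because $\overline{\mu}$ is defined intrinsically via $\overline{q}$, which is well defined by Corollary~\ref{cor:a_equals_b}(1).
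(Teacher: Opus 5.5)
Your proposal is correct and follows essentially the same route as the paper, whose proof simply cites Proposition~\ref{prop:df_schreier}, Theorem~\ref{thm:internal_monoids} and Corollary~\ref{cor:a_equals_b}(2). You make explicit the computation $\overline{\kappa}\,\overline{q}(\xi)+\zeta=\gamma(0,k(a))+\gamma(y,k(b)+y)=\gamma(y,k(a)+k(b)+y)$, which the paper leaves implicit.
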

\begin{proof}
The result follows immediately from Proposition~\ref{prop:df_schreier}, Theorem~\ref{thm:internal_monoids} and Corollary~\ref{cor:a_equals_b}(2).
\end{proof}

When no ambiguity occurs, we will denote this monoid operation on $df$ simply by $\overline{\mu}\big(\gamma(x,y),\gamma(z,w)\big)=\gamma(x,y)\cdot\gamma(z,w).$

\begin{remark}
It is easy to prove that with respect to this monoid operation $\overline{\mu}$ on $df,$ $\gamma$ satisfies the so-called \emph{Chasles identities} $\gamma(x,x)=1$ and $\gamma(x,y)\cdot\gamma(y,z)=\gamma(x,z)$ (cf.~\cite{bourn-direction}). Indeed, the former is immediate by the definition of the unit $\overline{s}\colon \mathds{1}\to \overline{\E}$ (since $\overline{s}(m)=\gamma(x,x),$ for any $m=f(x)\in M$), and for the latter we have
\begin{equation*}
\resizebox{\textwidth}{!}
{$\begin{split}
\gamma\big(x,k(a)+x\big)\cdot\gamma\big(k(a)+x,k(b)+k(a)+x\big)&=\gamma\big(x,k(a)+x\big)\cdot\gamma\big(k(a)+x,k(a)+k(b)+x\big)\\
&=\gamma\big(x,k(a)+x\big)\cdot \gamma\big(x,k(b)+x\big)\\
&=\gamma\big(x,k(a)+k(b)+x\big),
\end{split}$}
\end{equation*}
using the commutativity of $K,$ and the equalities~\eqref{eqn: def of gamma} and~\eqref{eqn:overline_mu}.
\end{remark}

The construction~\eqref{eqn:direction_D} results in an association of objects
\begin{equation}
\label{eqn:direction_functor}
\begin{array}{rcl}
d\colon cc\text{-}SExt_M & \longrightarrow  & \mathbf{CMon}(cc\text{-}SExt_M), \vspace{5pt}\\
 \mathbb{E} & \longmapsto & d(\mathbb{E})=(\overline{\E},\overline{\mu}\colon \overline{\E}\times \overline{\E} \to \overline{\E},\overline{s}\colon \mathds{1}\to \overline{\E})
\end{array}
\end{equation}
which we claim to be functorial. Observe that for every morphism $\alpha\colon \E\to \E^\prime$
\begin{equation}
\begin{aligned}
\label{eqn:mor}
\xymatrix{
{\mathbb{E}:K} \ar@<1.7ex>[d]_-{\alpha_1} \ar@{>->}[r]^-k &X \ar[d]^-{\alpha} \ar@{->>}[r]^-f &M \ar@{=}[d]\\
{\mathbb{E}^\prime:K^\prime} \ar@{>->}[r]_-{k^\prime} &{X^\prime} \ar@{->>}[r]_-{f^\prime} &{M},
}
\end{aligned}
\end{equation}
we deduce two monoid homomorphisms
\begin{equation*}
\begin{array}{rcl}
{R}(\alpha)\colon \R{E} &\longrightarrow & R_{\mathbb{E}^\prime},  \\
\big(x,k(a)+x\big)&\longmapsto &\big(\alpha(x),\alpha k(a)+\alpha(x)\big)=\big(\alpha(x),k^\prime\alpha_1(a)+\alpha(x)\big)
\end{array}
\end{equation*}
and
\begin{equation*}
\resizebox{\textwidth}{!}
{$\begin{array}{rcl}
{P}(\alpha)\colon \P{E}& \longrightarrow & P_{\mathbb{E}^\prime}, \\
\big(x,k(a)+x,k(b)+k(a)+x\big) &\longmapsto&\big(\alpha(x),\alpha k(a)+\alpha(x),\alpha k(b)+\alpha k(a)+\alpha(x)\big)
=\\ & & =\big(\alpha(x),k^\prime \alpha_1(a)+\alpha(x),k^\prime \alpha_1(b)+k^\prime \alpha_1(a)+\alpha(x)\big)
\end{array}$}
\end{equation*}
making the left-hand side of the following diagram commute
\[
\xymatrixcolsep{3pc}
\xymatrixrowsep{3pc}
\xymatrix{
{\P{E}} \ar[d]_-{P(\alpha)} \ar@<.5ex>[r]^-{\pi_1} \ar@<-.5ex>[r]_-{p_2} &{\R{E}}  \ar[d]^-{R(\alpha)} \ar@{->>}[r]^-{\gamma} &{df} \ar@{.>}[d]^-{d(\alpha)} \\
P_{\mathbb{E}'} \ar@<.5ex>[r]^-{\pi^\prime_1} \ar@<-.5ex>[r]_-{p^\prime_2} & R_{\mathbb{E}'} \ar@{->>}[r]_-{\gamma^\prime} &{df^\prime}.
}
\]
By the universal property of the coequaliser $\gamma,$ this induces a unique morphism $d(\alpha):df\rightarrow df^\prime$ in $\mathbf{Mon}$ such that $d(\alpha)\gamma=\gamma^\prime {R}(\alpha).$ In terms of elements, we have $d(\alpha)\big(\gamma(x,k(a)+x)\big)=\gamma^\prime\big(\alpha(x),k^\prime\alpha_1(a)+\alpha(x)\big).$

Observe now that with this definition of $d(\alpha)$ the diagram
\[
\xymatrix{
{K} \ar[d]_-{\alpha_1} \ar@{>->}[r]^-{\overline{\kappa}} &{df} \ar[d]_-{d(\alpha)} \ar@<.5ex>@{>>}[r]^-{\overline{f}} &M \ar@{=}[d] \ar@<.5ex>[l]^-{\overline{s}} \\
{K^\prime} \ar@{>->}[r]_-{\overline{\kappa}^\prime} &{df^\prime} \ar@<.5ex>@{>>}[r]^-{\overline{f^\prime}} &{M} \ar@<.5ex>[l]^-{\overline{s}^\prime}
}
\]
is commutative, so that $d(\alpha)$ is a morphism in $cc\text{-}SPt_M,$ and consequently, by Theorem~\ref{thm:internal_monoids}, it determines a morphism $d(\alpha)\colon d(\mathbb{E})\rightarrow d(\mathbb{E^\prime})$ in $\mathbf{CMon}(cc\text{-}SExt_M).$ It is clear that this definition of $d$ on the morphisms $\alpha\colon\mathbb{E}\rightarrow\mathbb{E}^\prime$ preserves compositions and identities, allowing us to conclude that~\eqref{eqn:direction_functor} is indeed a functor, which we call the \emph{direction functor} for cc-Schreier extensions. We are entitled to call it so, because we know by Remark~\ref{rmk:group_slice} that when $\mathbb{E}$ is an extension of groups, the connector $p=p_{\mathbb{E}}$ is the unique autonomous Mal'tsev operation associated with $f\in \mathbf{Gp}/M,$ and in this case our functor $d$ coincides, by construction, with the classical direction functor \cite{bourn-direction} applied to the slice category $\mathbf{Gp}/M.$

As it happens for the aforementioned case of groups, it follows by Theorem~\ref{thm:internal_monoids} that the functor~\eqref{eqn:direction_functor} admits an explicit description in terms of monoid \emph{actions}. Recall by Proposition~\ref{prop:actions} that any Schreier point $\xymatrix{{K} \ar@{>->}[r]_-k &B \ar@/_1pc/@{-->}[l]_-{q} \ar@<.5ex>@{>>}[r]^-f &M \ar@<.5ex>[l]^-s}$ determines an action $\sigma\colon M\to \mathrm{End}(K)$ of $M$ on $K$ defined by $\sigma(m)(a)=m\bullet a=q\big(s(m)+k(a)\big)$ (see~\eqref{eqn:Schreier point action sigma}), which allows to describe $B$ as the semidirect product $B\cong K\rtimes_{\sigma}M$ thanks to the Split Short Five Lemma (valid for Schreier points, see \cite[Corollary 2.3.8]{schreier_book}). More precisely, we have an isomorphism in $SPt_M$
\[
\xymatrix@C=30pt{
{K} \ar@{=}[d] \ar@{>->}[r]_-{\la id_K,0\ra} & K\rtimes_{\sigma}M \ar@/_1pc/@{-->}[l]_-{\pi_K} \ar[d]_-{\varphi} \ar@<.5ex>@{>>}[r]^-{\pi_M} &M \ar@{=}[d] \ar@<.5ex>[l]^-{\la 0, id_M\ra} \\
{K} \ar@{>->}[r]_-{k} &{B} \ar@/_1pc/@{-->}[l]_-{q} \ar@<.5ex>@{>>}[r]^-{f} &{M}, \ar@<.5ex>[l]^-{s}
}
\]
where $ K\rtimes_{\sigma}M=(K\times M, +)$ (the monoid operation is given by $(a,m)+(a',m')=(a+m\bullet a',m\cdot m'),$ for $a,a'\in K$ and $m,m'\in M$), and the morphism $\varphi\colon K\rtimes_{\sigma}M\rightarrow B,$ $\varphi(a,m)=k(a)+s(m),$ is an isomorphism in $\mathbf{Mon}.$

When $K$ is commutative, one says that $(K,\sigma)$ is an $M$\emph{-semimodule}. We denote by $\mathcal{S}mod_M$ the category of $M$-semimodules and action preserving homomorphisms: $h\colon(K,\sigma) \to (K',\sigma')$ is a morphism in $\mathcal{S}mod_M$ if $h\colon K\to K'$ is a monoid homomorphism such that the following diagram commutes
\[
\xymatrix{M\times K \ar@{-->}[r] \ar[d]_-{id_M\times h} & K \ar[d]^-h \\ M\times K' \ar@{-->}[r] & K',}
\]
i.e., $h(\sigma(m)(a))=\sigma'(m)(h(a))$ for all $a\in K$ and $m\in M.$ 

It is easy to check that $c\text{-}SPt_M\cong \mathcal{S}mod_M.$ The equivalence is obtained by associating with every Schreier point $\xymatrix{{K} \ar@{>->}[r]_-k &B \ar@/_1pc/@{-->}[l]_-{q} \ar@<.5ex>@{>>}[r]^-f &M \ar@<.5ex>[l]^-s}$ with commutative $K$ the $M$-semimodule $(K,\sigma),$ where $\sigma$ is defined in~\eqref{eqn:Schreier point action sigma}, and, conversely, with every $M$-semimodule $(K,\sigma)$ the Schreier point $\xymatrixcolsep{2.5pc}\xymatrix{{K} \ar@{>->}[r]_-{\la id_K,0 \ra} &{K\rtimes_{\sigma} M} \ar@/_1pc/@{-->}[l]_-{\pi_K} \ar@<.5ex>@{>>}[r]^-{\pi_M} &{M} \ar@<.5ex>[l]^-{\la 0,id_M\ra}}$ (with $\pi_K$ and $\pi_M$ projections).

In our case, we have:
\begin{proposition}
Given a cc-Schreier extension $\E$ as above, the monoid action of $M$ on $K$ induced by the Schreier point~\eqref{eqn:Schreier point overline f} coincides with the monoid action~\eqref{eqn:action} induced by $\mathbb{E}.$
\end{proposition}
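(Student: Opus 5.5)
The plan is to compute the action $\sigma$ of the Schreier point~\eqref{eqn:Schreier point overline f} directly from Proposition~\ref{prop:actions}, and compare it with~\eqref{eqn:action}. For $m\in M$ and $a\in K$ we have $m\bullet a=\overline{q}\big(\overline{s}(m)+\overline{\kappa}(a)\big)$. We will evaluate the argument as an element $\gamma\big(x,k(c)+x\big)$ and read off $c$ using the explicit retraction $\overline{q}$ of Proposition~\ref{prop:df_schreier}.

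First, choose any $x\in X$ with $f(x)=m$; a convenient choice is a representative $u_m\in B_m$, which exists because $\E$ is Schreier. By~\eqref{eqn: overline f and s}, $\overline{s}(m)=\gamma(u_m,u_m)$, and by definition $\overline{\kappa}(a)=\gamma\big(0,k(a)\big)$. Since $\gamma$ is a monoid homomorphism and $\R{E}\subseteq X\times X$ has componentwise operation, we get
\[
\overline{s}(m)+\overline{\kappa}(a)=\gamma\big((u_m,u_m)+(0,k(a))\big)=\gamma\big(u_m,u_m+k(a)\big).
\]
Next, we rewrite the second component in the form $k(c)+u_m$. By~\eqref{eqn:action}, or by Lemma~\ref{lemma:patrick} applied with $x=u_m$, we have $u_m+k(a)=k(m\ast a)+u_m$. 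Hence $\overline{s}(m)+\overline{\kappa}(a)=\gamma\big(u_m,k(m\ast a)+u_m\big)$.

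Finally, the formula $\overline{q}\big(\gamma(x,k(c)+x)\big)=c$ from Proposition~\ref{prop:df_schreier}, which is well defined by Corollary~\ref{cor:a_equals_b}(1), yields $m\bullet a=m\ast a$. Thus $\sigma=\eta$ as maps $M\to\mathrm{End}(K)$. Independence of the choice of $x$ is automatic: Corollary~\ref{cor:a_equals_b}(2) shows the $\gamma$-value does not depend on $x$ within the fibre, and Lemma~\ref{lemma:patrick} works for any $x$, not only representatives.

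There is no real obstacle in this argument. The only point needing care is that $\overline{s}(m)+\overline{\kappa}(a)$ really is computed in $df$ as $\gamma$ of the componentwise sum in $\R{E}$. This holds because $\gamma$ is a homomorphism. The term ordering $(u_m,u_m)+(0,k(a))$ matches the pattern $s(m)+k(a)$ required by~\eqref{eqn:split_action}.
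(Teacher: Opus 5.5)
Your proposal is correct and follows the paper's proof essentially verbatim: compute $\overline{q}\big(\overline{s}(m)+\overline{\kappa}(a)\big)=\overline{q}\gamma\big(x,x+k(a)\big)$, rewrite $x+k(a)=k(m\ast a)+x$ via Lemma~\ref{lemma:patrick}, and read off $m\ast a$ from the retraction of Proposition~\ref{prop:df_schreier}. The only difference is cosmetic: you take a representative $u_m$ where the paper allows any $x$ with $f(x)=m$, and you correctly note that this choice does not matter.
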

\begin{proof}
By Proposition~\ref{prop:df_schreier}, the Schreier retraction of~\eqref{eqn:Schreier point overline f} is given by $\overline{q}(\gamma(x,k(a)+x))=a.$ Hence, for all $m=f(x)\in M$ and $a\in K,$ the associated $M$-action on $K$ is $m\bullet a=\overline{q}\big(\overline{s}(m)+\overline{\kappa}(a)\big)=\overline{q}\gamma\big(x,x+k(a)\big)=\overline{q}\gamma\big(x,k(m\ast a)+x\big)=m\ast a$ (using~\eqref{eqn:Patrick}), the same as in~\eqref{eqn:action}.
\end{proof}

If we denote by $\eta\colon M\to \mathrm{End}(K)$ the monoid homomorphism which gives the action~\eqref{eqn:action} induced by $\mathbb{E}$ (see~\eqref{eqn:Schreier ext action eta}), it follows that $df\cong K\rtimes_{\eta}M$ via $\varphi\colon K\rtimes_{\eta}M\longrightarrow df,$ $(a,m)\mapsto \overline{\kappa}(a)+\overline{s}(m)=\gamma\big(x,k(a)+x\big),$ where $x\in X$ is any element such that $f(x)=m.$ We conclude that the direction functor defined in~\eqref{eqn:direction_functor} can be interpreted as the functor $d\colon cc\text{-}SExt_M\longrightarrow \mathcal{S}mod_M$ associating with each cc-Schreier extension $\mathbb{E}$ the $M$-semimodule $(K,\eta),$ and with each morphism~\eqref{eqn:mor} of cc-Schreier extensions, the morphism of $M$-semimodules $\alpha_1\colon(K,\eta)\rightarrow(K^\prime,\eta^\prime).$ Indeed, this morphism is action preserving, meaning that $\alpha_1(m\ast a)=m\ast' \alpha_1(a),$ for all $a\in K$ and $m\in M,$ thanks to the fact that in~\eqref{eqn:mor} $\alpha$ preserves the representatives.

\section{Properties of the direction functor}
Here we collect the main properties of the direction functor $d$ introduced in the previous section. We shall freely use both descriptions of $d,$ either as the functor
\[
\begin{array}{rcl}
d\colon cc\text{-}SExt_M & \longrightarrow & \mathbf{CMon}(cc\text{-}SExt_M)\cong cc\text{-}SPt_M \\
	\xymatrix{{\mathbb{E}:K} \ar@{>->}[r]^-k &X \ar@{->>}[r]^-f &{M}} & \longmapsto & \xymatrix{{K} \ar@{>->}[r]_-{\overline{\kappa}} & {df} \ar@/_1pc/@{-->}[l]_-{\overline{q}} \ar@<.5ex>@{>>}[r]^-{\overline{f}} &{M,} \ar@<.5ex>[l]^-{\overline{s}}}
\end{array}
\]
attaching the Schreier point~\eqref{eqn:Schreier point overline f} to any cc-Schreier extension $\mathbb{E}$ or as the action functor
\[
\begin{array}{rcl}
d\colon cc\text{-}SExt_M & \longrightarrow & \mathcal{S}mod_M \\
\xymatrix{{\mathbb{E}:K} \ar@{>->}[r]^-k &X \ar@{->>}[r]^-f &{M}} & \longmapsto & (K,\eta\colon M\to \mathrm{End}(K))
\end{array}
\]
associating with $\mathbb{E}$ the $M$-semimodule structure on $K$ given by the monoid action~\eqref{eqn:action}. \\

Observe that the functor $d\colon cc\text{-}SExt_M \longrightarrow \mathcal{S}mod_M$ can be defined, more generally, on the full subcategory $smod\text{-}SExt_M \subseteq c\text{-}SExt_M$ of all Schreier extensions inducing an $M$-semimodule structure, namely Schreier extensions~\eqref{eqn:ses} with commutative $K$ such that the map~\eqref{eqn:action} is an action (indeed, we know from Section~\ref{sec:action} that the cancellativity of the kernel $K$ is only a sufficient condition for this to happen). In the remainder of the section, we shall point out the good properties of both $d$ and this broadened functor $D\colon smod\text{-}SExt_M\longrightarrow \mathcal{S}mod_M$ (such that $d=D$ on $cc\text{-}SExt_M$).

\begin{proposition}
\label{prop:d_conservative}
The functors $d$ and $D$ are conservative.
\end{proposition}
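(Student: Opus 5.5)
Conservativity means: if $\alpha\colon\E\to\E'$ is a morphism in $smod\text{-}SExt_M$ (resp. $cc\text{-}SExt_M$) such that $D(\alpha)$ (resp. $d(\alpha)$) is an isomorphism, then $\alpha$ is an isomorphism. Since $d=D$ on the full subcategory $cc\text{-}SExt_M$, it suffices to prove the statement for $D$. The plan is to use the description of $D$ as the action functor $D\colon smod\text{-}SExt_M\to\mathcal{S}mod_M$, where $D(\alpha)$ is simply the action-preserving homomorphism $\alpha_1\colon(K,\eta)\to(K',\eta')$. Thus the hypothesis says exactly that $\alpha_1$ is a bijective monoid homomorphism, i.e. an isomorphism of monoids. (An isomorphism in $\mathcal{S}mod_M$ is an invertible action-preserving homomorphism, so in particular $\alpha_1$ is an isomorphism in $\mathbf{Mon}$. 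If one prefers the description via $cc\text{-}SPt_M$, $d(\alpha)$ being invertible forces the restriction $\alpha_1$ to the kernels to be invertible, because kernels are preserved by isomorphisms of split extensions.)

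Now in the morphism~\eqref{eqn:mor} the third component is $id_M$, which is trivially an isomorphism, and $\alpha_1$ is an isomorphism. By the Short Five Lemma for Schreier extensions, Proposition~\ref{prop:short_five_lemma_schreier}, $\alpha$ is then a bijective monoid homomorphism $X\to X'$, hence an isomorphism in $\mathbf{Mon}$.

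The remaining step, which is the only one needing care, is to check that the inverse triple $(\alpha_1^{-1},\alpha^{-1},id_M)$ is a morphism of Schreier extensions, i.e. that $\alpha^{-1}$ preserves representatives: $\alpha^{-1}(B(\E'))\subseteq B(\E)$. Commutativity of the two squares for the inverse is immediate from that for $\alpha$. For representatives I would argue directly: let $u'\in B'_m$ and set $u=\alpha^{-1}(u')\in f^{-1}(m)$. Given $x\in f^{-1}(m)$, write $\alpha(x)=k'(a')+u'$ uniquely, and apply $\alpha^{-1}$ to get $x=k(\alpha_1^{-1}(a'))+u$; uniqueness follows since $k(a)+u=k(b)+u$ gives, after applying $\alpha$, $k'\alpha_1(a)+u'=k'\alpha_1(b)+u'$, hence $\alpha_1(a)=\alpha_1(b)$ and $a=b$. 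Alternatively, by Proposition~\ref{prop:rep_pres} it suffices to find one representative of each $m$ mapped to a representative; since $\alpha$ maps $B_m$ into $B'_m$ and $B_m\neq\emptyset$, the above direct argument is the cleanest. Hence $\alpha$ is an isomorphism in $SExt_M$, and so in the full subcategories $smod\text{-}SExt_M$ and $cc\text{-}SExt_M$, proving that $D$ and $d$ are conservative.
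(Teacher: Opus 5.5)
Your proof is correct and follows the same route as the paper. You show that $\alpha_1$ is an isomorphism, apply the Short Five Lemma (Proposition~\ref{prop:short_five_lemma_schreier}) to get that $\alpha$ is bijective, and then check that $\alpha^{-1}$ preserves representatives. The only difference is that you actually prove this last point, which the paper merely asserts, and your direct verification is sound.
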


\begin{proof}
Consider a morphism $\alpha\colon \E \to \E^\prime$ as in~\eqref{eqn:mor} in $cc\text{-}SExt_M$ such that
\[
\xymatrix{
{K} \ar[d]_-{\alpha_1} \ar@{>->}[r]^-{\overline{\kappa}} &{df} \ar[d]_-{d(\alpha)} \ar@<.5ex>@{>>}[r]^-{\overline{f}} &M \ar@{=}[d] \ar@<.5ex>[l]^-{\overline{s}} \\
{K^\prime} \ar@{>->}[r]_-{\overline{\kappa^\prime}} &{df^\prime} \ar@<.5ex>@{>>}[r]^-{\overline{f^\prime}} &{M} \ar@<.5ex>[l]^-{\overline{s^\prime}}
}
\]
is an isomorphism in $cc\text{-}SPt_M.$ Then, in particular, $\alpha_1$ is an isomorphism of monoids, and it follows from the Short Five Lemma (Proposition~\ref{prop:short_five_lemma_schreier}) applied to~\eqref{eqn:mor} that $\alpha$ is also an isomorphism. Observe that if $u_m^\prime=\alpha(u)\in X^\prime$ is a representative, then $u\in X$ is also a representative, so that $\alpha^{-1}$ preserves the representatives. This shows that $d$ is conservative; the same proof works for $D,$ as well.
\end{proof}

In fact, the conservativity of $d$ and $D$ can be seen as a consequence of the following result:
\begin{proposition}
\label{prop:d_monos_reg_epis}
The functors $d$ and $D$ preserve and reflect monomorphisms and regular epimorphisms.
\end{proposition}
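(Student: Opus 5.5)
We first identify monomorphisms and regular epimorphisms in $SExt_M$, its full subcategories, and $\mathcal{S}mod_M$. Then we use the Short Five Lemma (Proposition~\ref{prop:short_five_lemma_schreier}) to transport between $\alpha$ and $\alpha_1$. Throughout we use the description of $d$ and $D$ as action functors, $\alpha\mapsto\alpha_1\colon(K,\eta)\to(K',\eta')$.

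\textbf{Step 1: $\mathcal{S}mod_M$.} Here a morphism $h$ is a monomorphism iff it is injective, and a regular epimorphism iff it is surjective. Injective $h$ is clearly monic. Conversely, test a monomorphism against the free $M$-semimodule $\mathbb{N}[M]$ on one generator (or against kernel pairs) to get injectivity. For regular epis, the coequaliser in $\mathcal{S}mod_M$ is computed via the coequaliser of the underlying monoids modulo an action-stable congruence, so regular epimorphisms are surjective. Conversely, a surjective $h$ is the coequaliser of its kernel pair, which is a sub-semimodule of $K\times K$.

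\textbf{Step 2: $smod\text{-}SExt_M$ and $cc\text{-}SExt_M$.} We show that $\alpha$ is monic iff $\alpha$ (equivalently $\alpha_1$) is injective, and regular epic iff $\alpha$ (equivalently $\alpha_1$) is surjective.

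If $\alpha_1$ is injective, then $\alpha$ is injective by Proposition~\ref{prop:short_five_lemma_schreier}(1), and injective maps are monic. For the converse, a monomorphism $\alpha$ is tested against the kernel pair of $\alpha$. This should be a Schreier extension $K\times_{K'}K\rightarrowtail \mathrm{Eq}(\alpha)\twoheadrightarrow M$ with representatives the pairs $(u,v)$ of representatives such that $\alpha(u)=\alpha(v)$; we need to check that representatives exist over each $m$. For the cc case, cancellativity is inherited by submonoids of $K\times K$. A cheaper alternative is to test against $\E$ itself with $K$ replaced by a free extension generated by one kernel element, e.g.\ $\mathbb{N}\rtimes M$-type extensions. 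Once $\alpha$ is injective, so is $\alpha_1=\alpha|_K$.

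If $\alpha_1$ is surjective, then $\alpha$ is surjective by Proposition~\ref{prop:short_five_lemma_schreier}(2). We then show that a surjective morphism is the coequaliser of its kernel pair inside the subcategory. Given $g\colon\E\to\E''$ coequalising the kernel pair, it factors set-theoretically through $X'$; we must check that the factorisation preserves representatives. Each representative of $\E'$ is $k'(a')+\alpha(u)$ with $a'$ invertible by Lemma~\ref{lemma:rep_invertible}, and $\alpha(u)$ is a representative, so Proposition~\ref{prop:rep_pres} gives the preservation. Conversely, if $\alpha$ is regular epic, then it is surjective (again via its kernel pair), so for $a'\in K'$ we have $k'(a')=\alpha(x)$ with $f(x)=1$. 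Writing $x=kq(x)+0$ gives $a'=\alpha_1q(x)$, so $\alpha_1$ is surjective.

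\textbf{Step 3: conclusion.} Combining Steps 1 and 2, $\alpha$ is monic (respectively regular epic) iff $\alpha_1$ is injective (respectively surjective) iff $D(\alpha)=\alpha_1$ is monic (respectively regular epic). This gives both preservation and reflection.

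The main obstacle is Step 2: verifying that the kernel pairs used as test objects actually live in $smod\text{-}SExt_M$ or $cc\text{-}SExt_M$. In particular, one must show that $\mathrm{Eq}(\alpha)$ has a representative over each $m$, which should follow by choosing $u\in B_m$ and using $(u,u)$, and that the induced map is an action. If this proves delicate, we would instead test monicity against maps out of the terminal-like extension $\mathds{1}$ twisted by free elements.
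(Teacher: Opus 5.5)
Your overall plan matches the paper's Appendix: characterise monomorphisms and regular epimorphisms on both sides as injective and surjective maps, then transfer between $\alpha$ and $\alpha_1$ with the Short Five Lemma. Your kernel-pair test for monomorphisms is sound, and slightly more direct than the paper's route, once you carry out the check you postponed. Write $(x,y)\in\mathrm{Eq}(\alpha)$ as $x=kq(x)+u_m$ and $y=kq(y)+u_m$. Then $k'\alpha_1q(x)+\alpha(u_m)=k'\alpha_1q(y)+\alpha(u_m)$, and since $\alpha(u_m)$ is a representative of $\mathbb{E}'$, Lemma~\ref{lemma:rep_invertible}(1) gives $\alpha_1q(x)=\alpha_1q(y)$. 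Hence $(u_m,u_m)$ is a representative (Proposition~\ref{prop:kernel_pairs}).

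The genuine gap is the step ``if $\alpha$ is regular epic, then it is surjective (again via its kernel pair)''. Knowing that $(\alpha_1,\alpha)$ is the coequaliser of its kernel pair \emph{inside} $smod\text{-}SExt_M$ (or $cc\text{-}SExt_M$) says nothing about surjectivity. You would also need this coequaliser to agree with the one in $\mathbf{Mon}$, i.e.\ the image $\alpha(X)\subseteq X'$, with a suitable kernel, must itself be an object of the subcategory through which $(\alpha_1,\alpha)$ factors. Nothing in your proposal shows this. It is exactly what ``$d$ and $D$ preserve regular epimorphisms'' rests on; reflection and the monomorphism statements only use the easy directions.

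The paper supplies this in Proposition~\ref{prop:char_complete}:
\begin{enumerate}
\item Factor $\alpha=ne$ in $\mathbf{Mon}$, with $e\colon X\twoheadrightarrow I_\alpha$ surjective and $n\colon I_\alpha\rightarrowtail X'$ injective.
\item Let $P$, with $\lambda\colon P\rightarrowtail I_\alpha$ and $n_1\colon P\rightarrowtail K'$, be the pullback of $n$ and $k'$, so that $P\cong\alpha_1(K)$.
\item Prove that $P\rightarrowtail I_\alpha\twoheadrightarrow M$ is a Schreier extension with representatives $e(u_m)$. Uniqueness in $y=\lambda(z)+e(u_m)$ again needs that $ne(u_m)=\alpha(u_m)$ is a representative of $\mathbb{E}'$, together with Lemma~\ref{lemma:rep_invertible}(1) and injectivity of $k'$ and $n_1$.
\item Then $(e_1,e)$ coequalises the kernel pair, so it factors through the regular epimorphism $(\alpha_1,\alpha)$. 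This makes the monomorphism $(n_1,n)$ split epic, hence an isomorphism, so $\alpha=ne$ is surjective.
\end{enumerate}
Alternatively, push $\mathbb{E}$ forward along the corestriction $K\twoheadrightarrow\alpha_1(K)$, as in the proof of Theorem~\ref{thm:d_cofibration}. The induced comparison map to $\mathbb{E}'$ lies over an injective map, so it is injective by the Short Five Lemma, and the same splitting argument makes it an isomorphism.

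Finally, discard your fallback test objects. A morphism into $\mathbb{E}$ from $\mathds{1}$ or from a semidirect-product extension yields a monoid section of $f$ through representatives. Such a section exists only when $\mathbb{E}$ underlies a Schreier point.
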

The proof is just an application of the Short Five Lemma in $c\text{-}SExt_M$ and the Split Short Five Lemma in $c\text{-}SPt_M,$ once that the nature of monomorphisms and regular epimorphisms in both categories is established. The full detailed proof is given in Appendix~\ref{appendix:monos}.

\begin{proposition}
\label{prop:d_products}
The direction functors $d$ and $D$ preserve finite products.
\end{proposition}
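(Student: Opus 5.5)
We work with the description of $d$ and $D$ as action functors into $\mathcal{S}mod_M$. We must check two things: the terminal object is preserved, and binary products are preserved.

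\emph{Terminal object.} In $smod\text{-}SExt_M$ (and in $cc\text{-}SExt_M$) the terminal object is $\mathds{1}\colon 0\rightarrowtail M = M$, as observed in the proof of Theorem~\ref{thm:internal_monoids}. Its kernel is $0$, which is trivially commutative and cancellative, so $D(\mathds{1})$ is the zero semimodule $(0,\text{trivial})$. This is the terminal object of $\mathcal{S}mod_M$.

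\emph{Binary products.} Take $\mathbb{E}\colon K\rightarrowtail X\twoheadrightarrow M$ and $\mathbb{E}'\colon K'\rightarrowtail X'\twoheadrightarrow M$. We first show that the product in $SExt_M$ is
\[
\mathbb{E}\times\mathbb{E}'\colon K\times K'\xrightarrow{\;k\times_M k'\;} X\times_M X'\twoheadrightarrow M,
\]
where $X\times_M X'$ is the pullback of $f$ and $f'$ and $(k\times_M k')(a,a')=(k(a),k'(a'))$. This generalises the case $\mathbb{E}\times\mathbb{E}$ treated in the proof of Theorem~\ref{thm:internal_monoids}. The steps are as follows.

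\begin{enumerate}
\item The kernel of $X\times_M X'\to M$ is $K\times K'$. It is commutative, and it is cancellative whenever $K$ and $K'$ are.
\item The map $X\times_M X'\to M$ is surjective, since $f$ and $f'$ are.
\item For each $m$ and any $u\in B_m$, $u'\in B'_m$, the pair $(u,u')$ is a representative. Indeed, $(x,x')=(k(a)+u,\,k'(a')+u')$, and the pair $(a,a')$ is unique because each coordinate is unique.
\item The projections preserve representatives, so they are morphisms of Schreier extensions.
\item Given morphisms $\alpha\colon\mathbb{F}\to\mathbb{E}$ and $\beta\colon\mathbb{F}\to\mathbb{E}'$, the induced map $\langle\alpha,\beta\rangle$ lands in $X\times_M X'$. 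It sends a representative $w$ to $(\alpha(w),\beta(w))$, which is a representative by step 3, so it is a morphism of Schreier extensions. Its uniqueness follows from the pullback property.
\end{enumerate}

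Once we show that the induced map~\eqref{eqn:action} for $\mathbb{E}\times\mathbb{E}'$ is an action, the product also lies in $smod\text{-}SExt_M$; this is the next step.

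\emph{The induced action.} For $m\in M$ and $(a,a')\in K\times K'$ we compute
\[
(u,u')+(k(a),k'(a'))=\bigl(k(m\ast a)+u,\;k'(m\ast' a')+u'\bigr).
\]
Hence $m\ast(a,a')=(m\ast a,\,m\ast' a')$, the componentwise action. It is an action because $\ast$ and $\ast'$ are, and it is exactly the product semimodule $(K,\eta)\times(K',\eta')$ in $\mathcal{S}mod_M$. In $\mathcal{S}mod_M$ products are computed as products of the underlying monoids with the diagonal action, which is straightforward to verify.

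Finally, $D$ sends the projections to the projections $K\times K'\to K$ and $K\times K'\to K'$, since $D$ acts on a morphism via its kernel component $\alpha_1$. So the canonical comparison map is the identity, and $D$ preserves binary products. The statement for $d$ follows because $cc\text{-}SExt_M$ is closed under these products, so $d$ is the restriction of $D$.

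The only step that needs care is the verification that $X\times_M X'$ with its representatives is really the product in the category with the representative-preserving condition on morphisms, rather than merely in $\mathbf{Mon}/M$. This is handled by step 3 together with Proposition~\ref{prop:rep_pres}.
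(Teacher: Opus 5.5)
Your proposal is correct and follows the same route as the paper: realise $\mathbb{E}\times\mathbb{E}'$ as $K\times K'\rightarrowtail X\times_M X'\twoheadrightarrow M$ with representatives $(u_m,u'_m)$, read off the induced action componentwise as $\eta\times\eta'$, and observe that $\mathds{1}$ is sent to the zero semimodule. You also make explicit several points the paper leaves implicit: the universal property with respect to representative-preserving morphisms, the fact that the product lies in $smod\text{-}SExt_M$ because the componentwise map is an action, and the fact that $D$ sends the product projections to the semimodule projections.
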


\begin{proof}
Recall that the trivial extension
\[
\xymatrix{{\mathds{1}:0} \ar@{>->}[r] &M \ar@{=}[r] &M}
\]
is a terminal object in $cc\text{-}SExt_M,$ and observe that given two cc-Schreier extensions
\[
\xymatrix{{\mathbb{E}:K} \ar@{>->}[r]^-k &X \ar@{>>}[r]^f &{M,}}
\]
\[
\xymatrix{{\mathbb{E^\prime}:K^\prime} \ar@{>->}[r]^-{k^\prime} &{X^\prime} \ar@{>>}[r]^{f^\prime} &{M,}}
\]
their product in $cc\text{-}SExt_M$ is given by
\begin{equation}
\label{eqn:product}
\xymatrixcolsep{3.5pc}
\xymatrix{ {\mathbb{E}\times\mathbb{E^\prime}:K\times K^\prime} \ar@{>->}[r]^-{k\times_M k^\prime} &{X\times_M X^\prime} \ar@{>>}[r]^-{f^\prime p_2=fp_1} &{M,}}
\end{equation}
with the obvious projections on $\mathbb{E}$ and $\mathbb{E^\prime},$ where
\[
\xymatrix{
{X\times_M X^\prime} \pullback \ar[r]^-{p_2} \ar[d]_-{p_1} &{X^\prime} \ar[d]^-{f^\prime} \\
X \ar[r]_-{f} &M
}
\]
is a pullback. (By realising $X\times_M X^\prime\cong\big\{(x,x^\prime)\in X\times X^\prime:f(x)=f^\prime(x^\prime)\big\},$ \eqref{eqn:product} is a Schreier extension with representatives $(u_m,u^\prime_m),$ where $u_m$ is a representative of $m$ for $\E$ and $u^\prime_m$ is a representative of $m$ for $\E^\prime.$)

Now, consider the direction functor in its action guise $d\colon cc\text{-}SExt_M\longrightarrow \mathcal{S}mod_M,$ so that $d(\mathbb{E})=(K,\eta),$ where $\eta$ is the induced monoid homomorphism action~\eqref{eqn:Schreier ext action eta}; let $d(\E^\prime)=(K^\prime,\eta^\prime).$ Then $d(\mathds{1})$ is the trivial $M$-semimodule $(0,!_M),$ which is clearly a terminal object in $\mathcal{S}mod_M.$ Suppose next that $d(\E\times \E^\prime)=(K\times K', \nu
),$ where $\nu\colon M\to \mathrm{End}(K\times K^\prime)$ is the monoid homomorphism induced by the product Schreier extension~\eqref{eqn:product}: thus, for any $m\in M,$
\[
\nu(m)\colon K\times K^\prime \longrightarrow K\times K^\prime, \ (a,a^\prime)\mapsto (b,b^\prime),
\]
with
\begin{equation*}
\begin{split}
&(u_m,u^\prime_m)+k\times_M k^\prime(a,a^\prime)=k\times_M k^\prime(b,b^\prime)+(u_m,u^\prime_m)\\
\Leftrightarrow \;\;& (u_m,u^\prime_m)+\big(k(a),k^\prime(a^\prime)\big)=\big(k(b),k^\prime(b^\prime)\big)+(u_m,u^\prime_m).
\end{split}
\end{equation*}
It follows that $u_m+k(a)=k(b)+u_m$ and $u^\prime_m+k^\prime(a^\prime)=k^\prime(b^\prime)+u^\prime_m,$ i.e. $b=\eta(m)(a)$ and $b^\prime=\eta^\prime(m)(a^\prime).$ We conclude that $\nu(m)(a,a^\prime)=(\eta(m)(a),\eta^\prime(m)(a^\prime)),$ which means that $\nu$ is the product action $\eta\times\eta^\prime$ given, more precisely, by the monoid homomorphism
\[
\xymatrix@C=40pt{M \ar[r]^-{\la id_M,id_M\ra} & M\times M \ar[r]^-{\eta\times \eta^\prime} & \mathrm{End}(K)\times \mathrm{End}(K^\prime) \ar@{>->}[r] & \mathrm{End}(K\times K^\prime).}
\]
This proves that $d(\mathbb{E}\times\mathbb{E^\prime})\cong d(\mathbb{E})\times d(\mathbb{E^\prime}).$
Observe that cancellativity is used nowhere in the proof, so the same holds for $D.$
\end{proof}

We come to the main property of $d$ and $D,$ namely the fact that they are cofibrations. Recall the general definition of the latter:

\begin{definition}
\label{def:cofibration}
Let ${F}\colon\mathcal{U}\rightarrow\mathcal{V}$ be a functor.
\begin{enumerate}
\item We say that a morphism $f\colon Y\rightarrow X$ in $\mathcal{U}$ is \emph{cocartesian over a morphism $\alpha\colon J\rightarrow I$ in $\mathcal{V}$} if ${F}f=\alpha$ and, for every morphism $g\colon Y\rightarrow Z$ in $\mathcal{U}$ such that ${F}g=\beta\alpha$ in $\mathcal{V}$ for some $\beta\colon I\rightarrow {F}Z$
\begin{equation*}
\centering
\xymatrix{
&Y \ar[r]^f \ar[d]_g &X\\
&Z  &\
}
\xymatrix{
&\ \ar@{--}[d] \\
&\
}
\xymatrix{
&J  \ar[d]_{{F}g}  \ar[r]^{\alpha={F}f} &I  \ar[dl]^{\exists \, \beta}\\
&{{F}Z,} &\
}
\end{equation*}
one has $g=hf$ for a unique $h\colon X\rightarrow Z$ in $\mathcal{U}$ such that ${F}h=\beta$
\begin{equation*}
\centering
\xymatrix{
&Y \ar[r]^f \ar[d]_g &X \ar@{.>}[dl]^{\exists! \, h}\\
&Z  &\
}
\xymatrix{
&\ \ar@{--}[d] \\
&\
}
\xymatrix{
&J  \ar[d]_{{F}g}  \ar[r]^{\alpha={F}f} &I  \ar[dl]^{\beta={F}h}\\
&{{F}Z.} &\
}
\end{equation*}
A morphism $f$ in $\mathcal{U}$ is \emph{cocartesian} (for ${F}$) if it is cocartesian over $\alpha={F}f.$
\item ${F}\colon\mathcal{U}\rightarrow\mathcal{V}$ is a \emph{cofibration} if for every morphism $\alpha\colon J\rightarrow I$ in $\mathcal{V}$ and for every object $Y$ in the fibre ${F}^{-1}(J)$ there exists a cocartesian morphism in $\mathcal{U}$ over $\alpha$ whose domain is $Y.$
\end{enumerate}
\end{definition}
(This notion, together with its dual notion of \emph{fibration}, goes back to A. Grothendieck \cite{bourbaki}. See also \cite{borceux-bourn}, Appendix $A.7.$) 

Fix a Schreier extension $\xymatrix{{\mathbb{E}:K} \ar@{>->}[r]^-k &X \ar@{>>}[r]^f &{M}}$ in $smod\text{-}SExt_M,$ inducing the action $\eta$ as in~\eqref{eqn:Schreier ext action eta}, and consider a morphism of $M$-semimodules $\alpha_1\colon(K,\eta)=D(\mathbb{E})\longrightarrow(K^\prime,\eta^\prime)$: thus, $\alpha_1$ is a monoid homomorphism satisfying $\alpha_1\big(\eta(m)(a)\big)=\eta^\prime(m)\big(\alpha_1(a)\big)$ for every $m\in M$ and $a\in K.$

We shall proceed by constructing the so-called \emph{pushforward} of $\mathbb{E}$ along $\alpha_1,$ which will give a Schreier extension $\E^\prime$ and a cocartesian morphism $\alpha\colon \E \to \E^\prime$ in $smod\text{-}SExt_M$ above $\alpha_1\colon D(\E)\to (K^\prime, \eta^\prime)$ (see~\eqref{eqn:cocartesian_mor}).

First, observe that by composing the action $\eta^\prime\colon M\longrightarrow \mathrm{End}(K^\prime)$ with $f$ we get an action $\psi=\eta^\prime f\colon X\longrightarrow \mathrm{End}(K^\prime)$ of $X$ on $K^\prime$ such that
\begin{equation}
\label{eqn:action varphi}
	\psi(x)(a')=\eta'(f(x))(a'),\text{ for all }x\in X\text{ and }a'\in K'.
\end{equation}
 We can then form the semidirect product $K^\prime\rtimes_{\psi} X$ and consider the square
\[
\xymatrixcolsep{3pc}
\xymatrixrowsep{3pc}
\xymatrix{
K \ar[r]^-k \ar[d]_-{\alpha_1} &{X} \ar[d]^-{\la 0,id_X\ra}  \\
{K^\prime} \ar[r]_-{\la id_{K^\prime},0 \ra} &{K^\prime\rtimes_{\psi} X,}
}
\]
which is \emph{not} commutative, in general. Define $\big(X^\prime,\xymatrix{{r\colon K^\prime\rtimes_{\psi} X} \ar@{->>}[r] &{X^\prime}})$ to be a coequaliser of $\langle id_{K^\prime},0 \rangle \alpha_1$ and $\langle 0,id_X \rangle k$ in $\mathbf{Mon},$ so that $X^\prime\cong {(K^\prime\rtimes_{\psi} X)}/{_\sim}$ where $\sim$ is the internal equivalence relation on $K^\prime\rtimes_{\psi} X$ generated by $\big(\alpha_1(a),0\big)\sim\big(0,k(a)\big)$ for all $a\in K.$ We have:

\begin{lemma}[{Cf.~\cite{P-II}}]
\label{lm:rho}
The above relation $\sim$ coincides with the relation $\rho$ on $K^\prime\rtimes_{\psi} X$ defined by $\big(a^\prime,x\big)\rho\big(b^\prime,y\big)$ if and only if $(x,y)\in\mathrm{Eq}(f)$ and $a^\prime+\alpha_1(q(x))=b^\prime+\alpha_1(q(y)),$ where $x=kq(x)+u_m$ and $y=kq(y)+u_m$ with respect to some representative $u_m$ of $m=f(x)=f(y)$ (using~\eqref{eqn:x=kq(x)+u}).
\end{lemma}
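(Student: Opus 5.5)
The plan is to prove the two inclusions $\sim\,\subseteq\rho$ and $\rho\subseteq\,\sim$. For the first it suffices to show that $\rho$ is a well-defined congruence on $K^\prime\rtimes_{\psi}X$ containing the generating pairs $\big(\alpha_1(a),0\big)\sim\big(0,k(a)\big)$; minimality of $\sim$ then gives $\sim\,\subseteq\rho$. For the second I will show directly that any $\rho$-related pair is $\sim$-related, by decomposing elements of the semidirect product.

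\emph{Well-definedness.} I first check that the condition defining $\rho$ does not depend on the chosen representative $u_m$. If $v_m$ is another representative, Lemma~\ref{lemma:rep_invertible}(2) gives $u_m=k(c)+v_m$ with $c\in U(K)$. The coordinates with respect to $v_m$ are then $q(x)+c$ and $q(y)+c$. Hence both sides of $a^\prime+\alpha_1(q(x))=b^\prime+\alpha_1(q(y))$ are shifted by the same element $\alpha_1(c)$, which is invertible in $K^\prime$. Adding $\alpha_1(c)$ or its inverse shows that the equality holds for one choice if and only if it holds for the other.

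\emph{$\rho$ is an equivalence relation containing the generators.} Reflexivity and symmetry are clear. Transitivity follows by chaining the equalities, using a common representative of the common image $m$; no cancellation is needed. For the generators, take $u_1=0$. Then $q(0)=0$ and $q(k(a))=a$, by Lemma~\ref{prop:2.1.5}(1), so $\big(\alpha_1(a),0\big)\rho\big(0,k(a)\big)$.

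\emph{$\rho$ is compatible with the operation.} This is the main computation. Recall that $(a^\prime,x)+(c^\prime,z)=\big(a^\prime+\eta^\prime(f(x))(c^\prime),\,x+z\big)$. Put $m=f(x)$, $n=f(z)$ and write $u_m+u_n=k(l)+u_{mn}$. Using \eqref{eqn:x=kq(x)+u}, Lemma~\ref{lemma:patrick} and the commutativity of $K$, I get $x+z=k\big(q(x)+m\ast q(z)+l\big)+u_{mn}$, that is, $q(x+z)=q(x)+m\ast q(z)+l$. Since $\alpha_1$ is a morphism of semimodules and $K^\prime$ is commutative, this yields
\[
a^\prime+\eta^\prime(m)(c^\prime)+\alpha_1q(x+z)=\big(a^\prime+\alpha_1q(x)\big)+\eta^\prime(m)\big(c^\prime+\alpha_1q(z)\big)+\alpha_1(l).
\]
The same formula holds for a second pair $(b^\prime,y)\,\rho\,(a^\prime,x)$ and $(d^\prime,w)\,\rho\,(c^\prime,z)$, with the same $m$, $n$, $l$ and representatives. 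Hence the two sums are again $\rho$-related. Therefore $\rho$ is a congruence and $\sim\,\subseteq\rho$.

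\emph{Converse inclusion.} Let $(a^\prime,x)\,\rho\,(b^\prime,y)$ and let $u_m$ be a representative of $m=f(x)=f(y)$. In the semidirect product,
\[
(a^\prime,x)=(a^\prime,0)+\big(0,kq(x)\big)+(0,u_m).
\]
Since $\sim$ is a congruence, replacing $\big(0,kq(x)\big)$ by $\big(\alpha_1q(x),0\big)$ gives
\[
(a^\prime,x)\sim\big(a^\prime+\alpha_1q(x),\,u_m\big).
\]
The same argument gives $(b^\prime,y)\sim\big(b^\prime+\alpha_1q(y),\,u_m\big)$. By the definition of $\rho$ the two right-hand sides are equal, so $(a^\prime,x)\sim(b^\prime,y)$.

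The main obstacle is the compatibility step. There I must keep track of the representative $u_{mn}$ and the cocycle-like term $l$, and make sure these are the same for both pairs being added. This works because $f(x)=f(y)$ and $f(z)=f(w)$, so one fixed choice of $u_m$, $u_n$ and $u_{mn}$ serves for both pairs.
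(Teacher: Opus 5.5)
Your proof is correct and follows the same route as the paper. You check that $\rho$ is well defined via the invertibility of $\alpha_1(c)$, place the generators in $\rho$ using the representative $0$ of $1$, and prove the converse via $(a^\prime,x)\sim\big(a^\prime+\alpha_1q(x),u_m\big)$. The one difference is that you verify explicitly, through $q(x+z)=q(x)+m\ast q(z)+l$ and the equivariance of $\alpha_1$, that $\rho$ is compatible with the semidirect-product operation, whereas the paper cites Patchkoria's Proposition~4.7 for this; your computation there is sound.
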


\begin{proof}
First observe that $\rho$ is well defined, meaning that it does not depend upon the choice of $u_m,$ because any two representatives differ by an invertible element of $K$ (Lemma~\ref{lemma:rep_invertible}) and the image of an invertible element under the monoid homomorphism $\alpha_1$ is still invertible in $K^\prime.$ Moreover, $\rho$ is clearly an equivalence relation on $K^\prime\rtimes_\psi X,$ and it is shown in \cite[Proposition~4.7]{P-II} that $\rho$ is an internal relation in $\mathbf{Mon}$ (using~\eqref{eqn:Patrick}). Observe also that for every $a\in K$ one has $\big(\alpha_1(a),0\big)\rho\big(0,k(a)\big),$ by choosing $0\in X$ as a representative of $1\in M,$ so that $\sim$ is contained in $\rho.$ Conversely, suppose that $\big(a^\prime,x\big)\rho\big(b^\prime,y\big)$ and write $x=kq(x)+u_m,$ $y=kq(y)+u_m,$ where $m=f(x)=f(y).$ Then $\big(a^\prime+\alpha_1(q(x)),u_m\big)\sim\big(b^\prime+\alpha_1(q(y)),u_m\big)$ because $\sim$ is reflexive, and
\[
\big(a^\prime+\alpha_1(q(x)),u_m\big)=\big(\alpha_1(q(x)),0\big)+\big(a^\prime,u_m\big)\sim\big(0,kq(x)\big)+\big(a^\prime,u_m\big)=\big(a^\prime,x\big)
\]
(using the commutativity of $K^\prime,$ the generators of $\sim$ and the definition of the monoid operation on $K^\prime\rtimes_{\psi} X$). Similarly, $\big(b^\prime+\alpha_1(q(y)),u_m\big)\sim \big(b^\prime,y\big),$ which allows us to conclude that $\big(a^\prime,x\big)\sim\big(b^\prime,y\big).$
\end{proof}

Then, we can compute the coequaliser $(X^\prime,r)$ as $K^\prime\rtimes_{\psi} X\xlongrightarrow{r} X^\prime={(K^\prime\rtimes_{\psi} X)}/{\rho},$ $(a^\prime,x)\mapsto[(a^\prime,x)]_\rho,$ and we have a commutative square
\[
\xymatrixcolsep{3.5pc}
\xymatrixrowsep{2.5pc}
\xymatrix{
K \ar@{>->}[r]^-k \ar[d]_-{\alpha_1} &{X} \ar[d]^-{\alpha=r\la 0,id_X\ra}  \\
{K^\prime} \ar[r]_-{k'=r\la id_{K^\prime},0\ra} &{X^\prime.}
}
\]
Observe that the morphism $k'=r\la id_{K^\prime},0\ra\colon K^\prime\rightarrow X^\prime$ is a monomorphism, because it follows from the definition of $\rho$ that $(a^\prime,0)\rho(b^\prime,0)$ if and only if $a^\prime=b^\prime.$ Moreover, if $\pi_X$ denotes the projection morphism $K^\prime\rtimes_{\psi} X\rightarrow X,(a^\prime,x)\mapsto x,$ the equality $f\pi_X\la 0,id_X\ra k=fk=0=f\pi_X\la id_{K^\prime},0 \ra\alpha_1$ and the universal property of the coequaliser guarantee that a unique $f^\prime$ exists as in the diagram
\[
\xymatrixcolsep{3.5pc}
\xymatrix{
{K} \ar@<.7ex>[r]^-{\la 0,id_X \ra k} \ar@<-.7ex>[r]_-{\la id_{K^\prime},0\ra \alpha_1} &{K^\prime\rtimes_{\psi} X} \ar@{>>}[d]_-{f\pi_X} \ar@{->>}[r]^-r &{X^\prime} \ar@{.>}[dl]^-{f^\prime} \\
&{M} &\
}
\]
satisfying $f\pi_X=f^\prime r.$ Explicitly, $f^\prime$ is defined by $f^\prime\big([(a^\prime,x)]_\rho\big)=f(x),$ and observe that as $f$ and $\pi_X$ are regular epimorphisms in $\mathbf{Mon},$ so is $f^\prime.$ Consequently, we have $Ker(f^\prime)=\{[(a^\prime,x)]_\rho\in X^\prime:x\in Ker(f)\}=k^\prime(K^\prime),$ where $k^\prime=r \langle id_{K^\prime},0 \rangle,$ because $\big[\big(a^\prime,k(a)\big)\big]_\rho=\big[\big(a^\prime+\alpha_1(a),0\big)\big]_\rho=k^\prime\big(a^\prime+\alpha_1(a)\big)$ for every $a\in K$ and $a^\prime\in K^\prime,$ and as we already know that $k^\prime$ is a monomorphism we conclude that $(K^\prime,k^\prime)$ is a kernel of $f^\prime.$

We obtain a monoid extension $\xymatrix{{\mathbb{E^\prime}:K^\prime} \ar@{>->}[r]^-{k^\prime} &{X^\prime} \ar@{>>}[r]^{f^\prime} &{M}}$ (which is the pushforward of $\E$ along $\alpha_1$) and we see that the diagram
\begin{equation}
\begin{aligned}
\label{eqn:cocartesian_mor}
\xymatrix{
{\mathbb{E}:K} \ar@<1.7ex>[d]_-{\alpha_1} \ar@{>->}[r]^-k &X \ar[d]^-{\alpha} \ar@{->>}[r]^-f &M \ar@{=}[d]\\
{\mathbb{E}^\prime:K^\prime} \ar@{>->}[r]_-{k^\prime} &{X^\prime} \ar@{->>}[r]_-{f^\prime} &{M}
}
\end{aligned}
\end{equation}
commutes (because $f'\alpha=f^\prime r\la 0,id_X\ra=f\pi_X\la 0,id_X\ra=f$).

\begin{lemma}
The sequence $\mathbb{E^\prime}$ and the morphism~\eqref{eqn:cocartesian_mor} belong to $smod\text{-}SExt_M$ and, moreover, $D(\mathbb{E^\prime})=(K^\prime,\eta^\prime).$
\end{lemma}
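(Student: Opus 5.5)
We first show that $\mathbb{E}^\prime$ is a Schreier extension. We already know that $(K^\prime,k^\prime)$ is a kernel of the regular epimorphism $f^\prime$. So it remains to exhibit representatives: for each $m\in M$ we take $\alpha(u_m)=[(0,u_m)]_\rho$, where $u_m\in B_m$.

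\emph{Existence of the decomposition.} Let $[(a^\prime,x)]_\rho\in f^{\prime-1}(m)$, so $f(x)=m$ and $x=kq(x)+u_m$. Then
\[
(a^\prime,x)=(a^\prime,0)+(0,kq(x)+u_m)\;\rho\;(a^\prime+\alpha_1q(x),0)+(0,u_m),
\]
using the generators of $\rho$ from Lemma~\ref{lm:rho}. Hence $[(a^\prime,x)]_\rho=k^\prime\big(a^\prime+\alpha_1q(x)\big)+\alpha(u_m)$.

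\emph{Uniqueness.} Suppose $k^\prime(b^\prime)+\alpha(u_m)=k^\prime(c^\prime)+\alpha(u_m)$. This means $(b^\prime,u_m)\rho(c^\prime,u_m)$. Since $q(u_m)=0$ with respect to the representative $u_m$, the description of $\rho$ in Lemma~\ref{lm:rho} gives $b^\prime=c^\prime$ directly. So $\alpha(u_m)\in B^\prime_m$, and $\mathbb{E}^\prime$ is a Schreier extension with commutative kernel $K^\prime$.

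\emph{$\alpha$ is a morphism in $smod\text{-}SExt_M$.} The diagram~\eqref{eqn:cocartesian_mor} commutes, and $\alpha$ sends the chosen representative $u_m$ to a representative. By Proposition~\ref{prop:rep_pres}, $\alpha$ then preserves all representatives, so it is a morphism of Schreier extensions.

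\emph{Computing the induced map.} Next we compute the map~\eqref{eqn:action} for $\mathbb{E}^\prime$, using the representative $\alpha(u_m)$. In $K^\prime\rtimes_\psi X$ we have
\[
(0,u_m)+(a^\prime,0)=(\psi(u_m)(a^\prime),u_m)=(\eta^\prime(m)(a^\prime),u_m)=(\eta^\prime(m)(a^\prime),0)+(0,u_m),
\]
by the semidirect product operation and~\eqref{eqn:action varphi}. Applying $r$ gives
\[
\alpha(u_m)+k^\prime(a^\prime)=k^\prime\big(\eta^\prime(m)(a^\prime)\big)+\alpha(u_m).
\]
Therefore $m\ast^\prime a^\prime=\eta^\prime(m)(a^\prime)$.

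Since $\eta^\prime$ is an action, the map~\eqref{eqn:action} of $\mathbb{E}^\prime$ is an action. Hence $\mathbb{E}^\prime\in smod\text{-}SExt_M$ and $D(\mathbb{E}^\prime)=(K^\prime,\eta^\prime)$. The morphism $\alpha$ lies in the full subcategory $smod\text{-}SExt_M$, and $D(\alpha)=\alpha_1$.

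\emph{Expected obstacle.} The delicate step is the uniqueness part of the Schreier condition. It relies on the explicit description of $\rho$ in Lemma~\ref{lm:rho} rather than on the merely generated relation $\sim$; the explicit form is exactly what lets us read off $b^\prime=c^\prime$.
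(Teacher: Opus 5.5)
Your proof is correct and follows essentially the same route as the paper. Both arguments decompose $[(a',x)]_\rho=k'\big(a'+\alpha_1(q(x))\big)+[(0,u_m)]_\rho$, obtain uniqueness from the explicit description of $\rho$ in Lemma~\ref{lm:rho}, and identify the induced action through $\psi(u_m)=\eta'(m)$ in the semidirect product. The only cosmetic differences are that you invoke Proposition~\ref{prop:rep_pres} where the paper observes directly that every $\alpha(u_m)$ is a representative, and that you compute the action forward rather than via the $\rho$-equivalence.
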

\begin{proof}
Every $[(a^\prime,x)]_\rho\in X^\prime,$ where $x=kq(x)+u_{f(x)}$ by~\eqref{eqn:x=kq(x)+u}, can be written as $[(a^\prime,x)]_\rho=[(a^\prime+\alpha_1(q(x)),u_{f(x)})]_\rho=[(a^\prime+\alpha_1(q(x)),0)]_\rho+[(0,u_{f(x)})]_\rho=k'(a'+\alpha_1(q(x)))+[(0,u_{f(x)})]_\rho,$ and it follows from the definition of $\rho$ that if $[(a^\prime,x)]_\rho=k'(b')+[(0,u_{f(x)})]_\rho=[(b^\prime,u_{f(x)})]_\rho$ for some other $b^\prime\in K^\prime,$ then $a^\prime+\alpha_1(q(x))=b^\prime.$ Thus $\mathbb{E^\prime}$ is a Schreier extension with representatives $[(0,u_m)]_\rho=\alpha(u_m);$ consequently $\alpha\colon\E\to \E'$ is a morphism of Schreier extensions (see Definition~\ref{dfn:morphism of Schreier exts}). The only thing that we are left to prove is that the map $M\times K^\prime\dashrightarrow K^\prime$ associated with $\mathbb{E^\prime}$ as in~\eqref{eqn:action} corresponds to the one induced from $\eta^\prime.$ But this is true, because $M\times K^\prime\dashrightarrow K^\prime$ is defined by $(m,a^\prime)\mapsto a^{\prime\prime},$
where $a''$ is the unique element of $K'$ such that $[(0,u_m)]_\rho+k'(a')=k'(a'')+[(0,u_m)]_\rho.$ This gives
\begin{equation*}
\begin{split}
	& [(0,u_m)]_\rho+[(a',0)]_\rho = [(a'',0)]_\rho +[(0,u_m)]_\rho \\
	& \Leftrightarrow [(\psi(u_m)(a'),u_m)]_\rho = [(a'',u_m)]_\rho \\
	& \Leftrightarrow \psi(u_m)(a')=a'' \\
	& \Leftrightarrow \eta'(m)(a')=a'',
\end{split}
\end{equation*}
using the definition of $\rho$ in Lemma~\ref{lm:rho} and~\eqref{eqn:action varphi}.
\end{proof}

We claim that~\eqref{eqn:cocartesian_mor} is a cocartesian morphism over $\alpha_1,$ in the sense of Definition~\ref{def:cofibration}, for the functor $D\colon smod\text{-}SExt_M \longrightarrow \mathcal{S}mod_M.$ Indeed, suppose that a morphism
\[
\xymatrix{
{\mathbb{E}:K} \ar@<1.7ex>[d]_-{\lambda_1} \ar@{>->}[r]^-k &X \ar[d]^-{\lambda} \ar@{->>}[r]^-f &M \ar@{=}[d]\\
{\mathbb{F}:L} \ar@{>->}[r]_-{l} &{Y} \ar@{->>}[r]_-{g} &{M}
}
\]
is given in $smod\text{-}SExt_M,$ such that $\lambda_1=\beta_1\alpha_1$ for some $\beta_1\colon(K^\prime,\eta^\prime)\rightarrow(L,\nu)$ in $\mathcal{S}mod_M$
\[
\xymatrix{
{(K,\eta)} \ar[r]^-{\alpha_1} \ar[d]_-{\lambda_1} &{(K^\prime,\eta^\prime)} \ar[ld]^-{\beta_1} \\
{(L,\nu),} &\
}
\]
where $(L,\nu)=D(\mathbb{F}).$

We define $h\colon K^\prime\rtimes_\psi X\longrightarrow Y,$ $(a^\prime,x)\mapsto l\beta_1(a^\prime)+\lambda(x),$ which is a monoid homomorphism because
\begin{equation*}
\begin{split}
h\big((a^\prime,x) + (b^\prime,y)\big)&=h \big(a^\prime+\psi(x)(b^\prime),x+y\big)\\
&=h \big(a^\prime+\eta^\prime(f(x))(b^\prime),x+y\big)\\
&=l\beta_1\big(a^\prime+\eta^\prime(f(x))(b^\prime)\big)+\lambda(x+y)\\
&=l\beta_1(a')+l\beta_1\big(\eta'(f(x))(b')\big)+\lambda(x)+\lambda(y)\\
&=l\beta_1(a')+l\big(\nu(f(x))(\beta_1(b'))\big)+\lambda(x)+\lambda(y)\\
&=l\beta_1(a')+l\big(\nu(g\lambda(x))(\beta_1(b'))\big)+\lambda(x)+\lambda(y)\\
&=l\beta_1(a^\prime)+\lambda(x)+l\beta_1(b^\prime)+\lambda(y)\\
&=h(a^\prime,x)+h(b^\prime,y)
\end{split}
\end{equation*}
(using~\eqref{eqn:action varphi}, the fact that $\beta_1$ is a morphism in $\mathcal{S}mod_M$ and~\eqref{eqn:Patrick}). We have then $h\la 0,id_X\ra k=h \la id_{K^\prime},0 \ra\alpha_1,$ because $\lambda k=l\lambda_1=l\beta_1\alpha_1.$ By the universal property of the coequaliser $r,$ a unique $\beta\colon X^\prime\rightarrow Y$ is induced
\begin{equation}
\begin{aligned}
\label{eqn:unique_beta}
\xymatrixcolsep{3.5pc}
\xymatrix{
{K} \ar@<.7ex>[r]^-{\la 0,id_X\ra k} \ar@<-.7ex>[r]_-{\la id_{K^\prime},0\ra\alpha_1} &{K^\prime\rtimes_{\psi} X} \ar[d]_-{h} \ar@{->>}[r]^-r &{X^\prime} \ar@{.>}[dl]^-{\beta} \\
&{Y} &\
}
\end{aligned}
\end{equation}
satisfying $h=\beta r,$ explicitly (well) defined by $\beta\big([(a^\prime,x)]_\rho\big)=h(a',x)=l\beta_1(a^\prime)+\lambda(x).$\\The diagram
\begin{equation}
\begin{aligned}
\label{eqn:cocartesian_lifting}
\xymatrix{
{\mathbb{E^\prime}:K^\prime} \ar@<1.7ex>[d]_-{\beta_1} \ar@{>->}[r]^-{k^\prime} &{X^\prime} \ar[d]^-{\beta} \ar@{->>}[r]^-{f^\prime} &M \ar@{=}[d]\\
{\mathbb{F}:L} \ar@{>->}[r]_-{l} &{Y} \ar@{->>}[r]_-{g} &{M}
}
\end{aligned}
\end{equation}
is also commutative, because $\beta k^\prime(a^\prime)=\beta\big([(a^\prime,0)]_\rho\big)=l\beta_1(a^\prime)$ and $g\beta\big([(a^\prime,x)]_\rho\big)=g\big(l\beta_1(a^\prime)+\lambda(x)\big)=0+g\lambda(x)=f(x)=f^\prime\big([(a^\prime,x)]_\rho\big)$ for every $a^\prime\in K^\prime$ and $x\in X.$ Moreover, $\beta$ preserves the representatives because, by assumption, $\lambda$ does and $\beta\big([(0,u_m)]_\rho\big)=\lambda(u_m)$: thus~\eqref{eqn:cocartesian_lifting} is a morphism in $smod\text{-}SExt_M,$ and as for every $x\in X$ the equality $\beta\alpha(x)=\beta\big([(0,x)]_\rho\big)=\lambda(x)$ holds (so that $\beta\alpha=\lambda$), the triangle
\begin{equation}
\begin{aligned}
\label{eqn:beta_D}
\xymatrixcolsep{2.5pc}
\xymatrix{
{\mathbb{E}} \ar[r]^-{(\alpha_1,\alpha)} \ar[d]_-{(\lambda_1,\lambda)} &{\mathbb{E^\prime}} \ar[ld]^-{(\beta_1,\beta)} \\
{\mathbb{F}} &\
}
\end{aligned}
\end{equation}
commutes in $smod\text{-}SExt_M.$

To prove the uniqueness of $\beta$ in~\eqref{eqn:beta_D}, suppose that for some other morphism
\[
\xymatrix{
{\mathbb{E^\prime}:K^\prime} \ar@<1.7ex>[d]_-{\beta_1} \ar@{>->}[r]^-{k^\prime} &{X^\prime} \ar[d]^-{\overline{\beta}} \ar@{->>}[r]^-{f^\prime} &M \ar@{=}[d]\\
{\mathbb{F}:L} \ar@{>->}[r]_-{l} &{Y} \ar@{->>}[r]_-{g} &{M}
}
\]
in $smod\text{-}SExt_M$ we have $\overline{\beta}\alpha=\lambda.$ Then for all $(a^\prime,x)\in K^\prime\rtimes_\psi X$
\begin{equation*}
\begin{split}
\overline{\beta} r(a^\prime,x)&=\overline{\beta}\big([(a^\prime,x)]_\rho\big)\\
&=\overline{\beta}\big([(a^\prime,0)]_\rho+[(0,x)]_\rho\big)\\
&=\overline{\beta}\big([(a^\prime,0)]_\rho\big)+\overline{\beta}\big([(0,x)]_\rho\big)\\
&=\overline{\beta} k^\prime(a^\prime)+\overline{\beta}\alpha(x)\\
&=l\beta_1(a^\prime)+\lambda(x)\\
&=h(a',x),
\end{split}
\end{equation*}
so that $\overline{\beta}=\beta$ by the uniqueness of $\beta$ in~\eqref{eqn:unique_beta}.

Thus we have proved:
\begin{theorem}
\label{thm:d_cofibration}
The functors $D\colon smod\text{-}SExt_M \longrightarrow \mathcal{S}mod_M$ and $d\colon cc\text{-}SExt_M\longrightarrow c\text{-}\mathcal{S}mod_M,$ where $c\text{-}\mathcal{S}mod_M\subseteq\mathcal{S}mod_M$ is the full subcategory of all $M$-semimodules $(K,\eta)$ with $K$ cancellative, are cofibrations.
\end{theorem}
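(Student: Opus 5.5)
For $D$ the argument is essentially the construction just carried out, so the plan is mostly to assemble it. Fix $\mathbb{E}$ in $smod\text{-}SExt_M$ with $D(\mathbb{E})=(K,\eta)$ and a morphism $\alpha_1\colon(K,\eta)\to(K',\eta')$ in $\mathcal{S}mod_M$. The candidate cocartesian morphism with domain $\mathbb{E}$ is the pushforward~\eqref{eqn:cocartesian_mor}: $X'=(K'\rtimes_\psi X)/\rho$, with $\psi=\eta' f$ and $\rho$ as in Lemma~\ref{lm:rho}. The last lemma shows that $\mathbb{E}'$ is a Schreier extension in $smod\text{-}SExt_M$, that $\alpha$ preserves representatives, and that $D(\mathbb{E}')=(K',\eta')$, so $D(\alpha)=\alpha_1$. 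Given $(\lambda_1,\lambda)\colon\mathbb{E}\to\mathbb{F}$ with $\lambda_1=\beta_1\alpha_1$, the discussion leading to~\eqref{eqn:beta_D} produces $\beta$ with $\beta\alpha=\lambda$ and $D(\beta)=\beta_1$, since the kernel component of~\eqref{eqn:cocartesian_lifting} is $\beta_1$. It also proves that $\beta$ is unique among morphisms of extensions with kernel component $\beta_1$. Because morphisms in $SExt_M$ are determined by their middle component, this is exactly the uniqueness required in Definition~\ref{def:cofibration}. So $D$ is a cofibration.

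For $d$ I would use the same pushforward. The extra point is that if $\mathbb{E}$ is a cc-extension and $K'$ is cancellative (and commutative, being a semimodule), then $\mathbb{E}'$ lies in $cc\text{-}SExt_M$. Its kernel is $K'$ via the monomorphism $k'$, so $\mathbb{E}'$ is indeed a cc-Schreier extension. On objects of $cc\text{-}SExt_M$ the functor $d$ agrees with $D$, and both $cc\text{-}SExt_M\subseteq smod\text{-}SExt_M$ and $c\text{-}\mathcal{S}mod_M\subseteq\mathcal{S}mod_M$ are full subcategories. Hence any test morphism $\mathbb{E}\to\mathbb{F}$ in $cc\text{-}SExt_M$ with a factorisation in $c\text{-}\mathcal{S}mod_M$ is also such data for $D$. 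The induced $\beta$ is a morphism between objects of the full subcategory, so it lies in $cc\text{-}SExt_M$, and uniqueness is inherited. Finally, $d$ really lands in $c\text{-}\mathcal{S}mod_M$, since $d(\mathbb{E})=(K,\eta)$ with $K$ cancellative.

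The only step needing real care is confirming that the uniqueness proved for $\beta$ matches the definition, i.e.\ that a morphism $h$ with $D(h)=\beta_1$ and $h\alpha=\lambda$ has $k'$-component $\beta_1$. This holds because the kernel component is forced by the middle one. The rest is bookkeeping, with the substantive work already done in the lemmas preceding the statement.
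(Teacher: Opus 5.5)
Your proposal is correct and is the paper's own argument. The cocartesian lift is the pushforward $X'=(K'\rtimes_\psi X)/\rho$ along $\alpha_1$, and the factorisation $\beta$ is induced on the coequaliser by $h(a',x)=l\beta_1(a')+\lambda(x)$ and is unique by the coequaliser property. The case of $d$ follows, as the paper simply asserts, because the pushforward of a cc-extension along a map into a cancellative semimodule stays in $cc\text{-}SExt_M$, and both subcategories involved are full.

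One cosmetic point concerns your last paragraph. The condition $D(h)=\beta_1$ already says, by definition of $D$ on morphisms, that the kernel component of $h$ is $\beta_1$. That constraint is genuinely needed for uniqueness, since $\alpha_1$ need not be epic, so nothing further has to be checked there.
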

(It is immediate that the same proof given for $D\colon smod\text{-}SExt_M \longrightarrow \mathcal{S}mod_M$ also works for $d\colon cc\text{-}SExt_M\longrightarrow c\text{-}\mathcal{S}mod_M.$)

The importance of this fact comes from the following classical result:
\begin{theorem}
\label{thm:cofibration_monoidal}
Let $\mathcal{U}$ be a category with finite products and ${F}\colon\mathcal{U}\longrightarrow\mathcal{V}$ a product preserving cofibration. Suppose also that the cocartesian morphisms in $\mathcal{U}$ are stable under finite products. If $(M,\omega\colon M\times M\longrightarrow M,\varepsilon\colon1\longrightarrow M)$ is an internal monoid in $\mathcal{V},$ there results a monoidal structure on the fibre ${F}^{-1}(M),$ which is symmetric as soon as the internal monoid $M$ is commutative.
\end{theorem}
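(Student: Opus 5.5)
The plan is to define the tensor product on the fibre $F^{-1}(M)$ by pushing forward the product along the multiplication. Given objects $Y,Y'$ with $FY=FY'=M$, the product $Y\times Y'$ exists in $\mathcal{U}$. Since $F$ preserves products, $F(Y\times Y')\cong M\times M$. Because $F$ is a cofibration, we may choose a cocartesian morphism $c_{Y,Y'}\colon Y\times Y'\to Y\otimes Y'$ over $\omega$, so that $F(Y\otimes Y')=M$. On morphisms $g\colon Y\to Z$ and $g'\colon Y'\to Z'$ of the fibre (those with $Fg=id_M=Fg'$), the composite $c_{Z,Z'}(g\times g')$ lies over $\omega(id\times id)=\omega$. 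The universal property of $c_{Y,Y'}$ therefore gives a unique $g\otimes g'$ over $id_M$, and uniqueness yields functoriality. The unit object is $I$, the codomain of a cocartesian lift of $\varepsilon\colon 1\to M$ with domain the terminal object $1_{\mathcal{U}}$; this object lies over $1$ because $F$ preserves the empty product.

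Next come the coherence isomorphisms, all obtained from uniqueness in the cocartesian universal property. For associativity, compare $(Y\otimes Y')\otimes Y''$ and $Y\otimes(Y'\otimes Y'')$, and this is where the hypothesis on stability of cocartesian morphisms under products is used. The morphism $c_{Y,Y'}\times id_{Y''}$ is cocartesian over $\omega\times id_M$. Composites of cocartesian morphisms are cocartesian (a routine check), so
\[
c_{Y\otimes Y',Y''}\,(c_{Y,Y'}\times id_{Y''})\colon Y\times Y'\times Y''\to (Y\otimes Y')\otimes Y''
\]
is cocartesian over $\omega(\omega\times id)$. Similarly the other bracketing gives a cocartesian morphism over $\omega(id\times\omega)$, which is the same map by associativity in $\mathcal{V}$. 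Two cocartesian morphisms with the same domain over the same arrow are related by a unique isomorphism over the identity, and this isomorphism is the associator. The unitors are built the same way. The composite $1_{\mathcal{U}}\times Y\cong Y$ followed by nothing is compared with $(c_\varepsilon\times id_Y)$ followed by $c_{I,Y}$, which is cocartesian over $\omega(\varepsilon\times id)=id_M$. The identity $Y\to Y$ is itself cocartesian over $id_M$, so the unit axiom of $M$ yields $I\otimes Y\cong Y$. If $\omega$ is commutative, the symmetry is defined in the same spirit. The twist $\tau\colon Y\times Y'\to Y'\times Y$ is an isomorphism, hence cocartesian, over the twist of $M\times M$. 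Therefore $c_{Y',Y}\tau$ is cocartesian over $\omega\circ\mathrm{tw}=\omega$, and uniqueness gives $Y\otimes Y'\cong Y'\otimes Y$.

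Naturality of all these isomorphisms follows from the uniqueness clause. The coherence axioms are checked as follows: each side of the pentagon, triangle and hexagon, precomposed with the canonical cocartesian map out of the iterated product $Y_1\times\cdots\times Y_n$, becomes a map over the identity. Both sides are determined by the same map from the product, because in $\mathcal{U}$ the product associators and symmetries are coherent and the cocartesian lifts are unique. Hence the two sides agree.

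The main obstacle is this coherence verification: one must show that iterated products of cocartesian maps remain cocartesian, so that every diagram can be tested after precomposing with a single cocartesian arrow from $Y_1\times\cdots\times Y_n$. For that I would first prove the lemma that composites of cocartesian morphisms, products of cocartesian morphisms (which is the stated hypothesis), and isomorphisms are all cocartesian. I would then reduce every coherence diagram to the uniqueness of factorisations through one such arrow.
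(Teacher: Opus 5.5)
Your proposal is correct and is essentially the standard argument the paper relies on. The paper gives no proof of its own and cites \cite[Theorem 9]{bourn-direction}, whose argument matches yours: the tensor product is the codomain of the cocartesian lift of $\omega$ out of $Y\times Y'$, the unit is the codomain of the cocartesian lift of $\varepsilon$ out of the terminal object, and the associator, unitors, symmetry and coherence conditions all come from uniqueness of factorisations through cocartesian maps, with product-stability guaranteeing that the iterated maps out of $Y_1\times\cdots\times Y_n$ are cocartesian.
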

(See for example~\cite[Theorem 9]{bourn-direction} for a proof.)

Observe that both functors $D\colon smod\text{-}SExt_M \longrightarrow \mathcal{S}mod_M$ and $d\colon cc\text{-}SExt_M\longrightarrow c\text{-}\mathcal{S}mod_M$ fall under the scope of Theorem~\ref{thm:cofibration_monoidal}, because, for a cofibration, being conservative is equivalent to the fact that every morphism in the domain category is cocartesian. Moreover, observe that $\mathcal{S}mod_M\cong\mathbf{CMon}(\mathcal{S}mod_M),$ because every $M$-semimodule $(K,\eta)$ is an internal monoid in $\mathcal{S}mod_M$ whose internal monoid operation in $\mathcal{S}mod_M$ is the monoid operation $+\colon K\times K\rightarrow K$ of the commutative monoid $K$ itself (the commutativity of $K$ guarantees that $+$ is a monoid homomorphism, and it is action-preserving by the action axioms on $\eta$). Similarly, we have $c\text{-}\mathcal{S}mod_M\cong\mathbf{CMon}(c\text{-}\mathcal{S}mod_M).$

By Theorem~\ref{thm:cofibration_monoidal}, we have:

\begin{corollary}
For every object $(K,\eta)$ in $\mathcal{S}mod_M$ (resp., in $c\text{-}\mathcal{S}mod_M$) there results a symmetric monoidal structure on the fibre $D^{-1}(K,\eta)$ of the direction functor $D\colon smod\text{-}SExt_M \longrightarrow \mathcal{S}mod_M$ (resp., of the functor $d\colon cc\text{-}SExt_M\longrightarrow c\text{-}\mathcal{S}mod_M$).
\end{corollary}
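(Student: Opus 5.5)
The corollary will follow from Theorem~\ref{thm:cofibration_monoidal}, applied with $\mathcal{U}=smod\text{-}SExt_M$ (resp.\ $cc\text{-}SExt_M$), $\mathcal{V}=\mathcal{S}mod_M$ (resp.\ $c\text{-}\mathcal{S}mod_M$) and $F=D$ (resp.\ $d$). I will check its hypotheses in order and then specialise to the internal commutative monoid $(K,\eta)$.

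First, $\mathcal{U}$ has finite products. The terminal object is the trivial extension $\mathds{1}$, and binary products are the extensions~\eqref{eqn:product} built on the pullback $X\times_M X'$. The proof of Proposition~\ref{prop:d_products} already shows that~\eqref{eqn:product} is a Schreier extension with representatives $(u_m,u'_m)$ and that its induced map is the product action $\eta\times\eta'$. So~\eqref{eqn:product} lies in $smod\text{-}SExt_M$. If both kernels are cancellative, so is $K\times K'$, hence it also lies in $cc\text{-}SExt_M$. The universal property is checked directly: a pair of morphisms into $\E$ and $\E'$ induces a map into $X\times_M X'$, and this map sends representatives to pairs of representatives, so it preserves representatives. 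Second, $F$ preserves finite products by Proposition~\ref{prop:d_products}. Third, $F$ is a cofibration by Theorem~\ref{thm:d_cofibration}.

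The remaining hypothesis is that cocartesian morphisms are stable under finite products. I expect this to be the only point needing an argument, and I will handle it via conservativity. For a cofibration, conservativity implies that every morphism is cocartesian. To see this, let $g\colon Y\to Z$ be any morphism and let $c\colon Y\to W$ be the cocartesian lift over $Fg$. Then $g=hc$ with $Fh=\mathrm{id}$. Since $D$ is conservative (Proposition~\ref{prop:d_conservative}), $h$ is an isomorphism, so $g$ is cocartesian. Consequently the class of cocartesian morphisms is the class of all morphisms, which is trivially stable under products.

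Finally, $(K,\eta)$ carries an internal commutative monoid structure in $\mathcal{S}mod_M$ (resp.\ in $c\text{-}\mathcal{S}mod_M$). Its multiplication is $+\colon K\times K\to K$ and its unit is $0$. Here $+$ is a monoid homomorphism because $K$ is commutative, and it is action-preserving by axiom $(A_3)$; the unit is action-preserving by $(A_2)$. Also, $K\times K$ is cancellative when $K$ is, so the structure stays inside $c\text{-}\mathcal{S}mod_M$. Theorem~\ref{thm:cofibration_monoidal} then gives a monoidal structure on the fibre $D^{-1}(K,\eta)$ (resp.\ $d^{-1}(K,\eta)$), and it is symmetric because this internal monoid is commutative.
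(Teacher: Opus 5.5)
Your proof is correct and follows the paper's route. The paper likewise applies Theorem~\ref{thm:cofibration_monoidal}, using Proposition~\ref{prop:d_products}, Theorem~\ref{thm:d_cofibration}, the fact that a conservative cofibration makes every morphism cocartesian (which you prove explicitly while the paper only asserts it), and the internal commutative monoid structure on $(K,\eta)$ given by $+$ and $0$; your extra checks on finite products in $smod\text{-}SExt_M$ and on cancellativity merely fill in details the paper leaves implicit.
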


Since it is known from \cite[Section VI]{bourn-direction} that, in the case of group extensions of a group $G$ by a $G$-module $(A,\eta),$ the resulting tensor product on the fibres of the corresponding direction functor for groups coincides with the usual Baer sum of extensions, we are entitled to call the tensor product $\mathbb{E}\otimes\mathbb{E^\prime}$ resulting on the fibres $d^{-1}(K,\eta)$ of the functor $d\colon cc\text{-}SExt_M\longrightarrow c\text{-}\mathcal{S}mod_M$ the \emph{Baer sum} of the cc-Schreier extensions $\mathbb{E},$ $\mathbb{E^\prime}$ (and similarly for $D\colon smod\text{-}SExt_M \longrightarrow \mathcal{S}mod_M$).

Denoting by $\mathrm{SExt}(M, K, \eta)$ the set of connected components of $D^{-1}(K,\eta),$ which is nothing but the set of isomorphism classes of Schreier extensions of $M$ by $K$ inducing the action $\eta,$ we conclude that the tensor product on  $D^{-1}(K,\eta)$ endows the set $\mathrm{SExt}(M, K, \eta)$ with the structure of a commutative monoid. Thanks to \cite[Theorem 4.19]{P-II}, this commutative monoid $\mathrm{SExt}(M, K, \eta)$ is isomorphic to the second cohomology monoid $\mathcal{H}^2(M, K, \eta)$ of the cohomology theory introduced by Patchkoria in \cite{Patchkoria77}. Moreover, when $K$ is cancellative, the fibres $D^{-1}(K,\eta)$ and $d^{-1}(K,\eta)$ coincide, and it follows from \cite[Theorem 4.29]{P-II} that the corresponding commutative monoid $\mathrm{SExt}(M, K, \eta)$ is also isomorphic to the second cohomology monoid $H^2(M, K, \eta)$ of the cohomology theory introduced by Patchkoria in \cite{P-I}.

The interpretation of the higher cohomology monoids of such cohomology theories is material for future work.

\appendix
\section{Monomorphisms and regular epimorphisms in $cc\text{-}SExt_M$}
\label{appendix:monos}
We aim to prove that the monomorphisms (resp., regular epimorphisms) in the category $cc\text{-}SExt_M$ of cc-Schreier extensions on a monoid $M$ are precisely the morphisms
\begin{equation}
\begin{aligned}
\label{eqn:oto}
\xymatrix{
\E: {K} \ar@{}[rd]|{(\ast)} \ar@<3ex>[d]_-{\alpha_1} \ar@{>->}[r]^-k &X \ar[d]_-{\alpha} \ar@{->>}[r]^-f &M \ar@{=}[d]\\
\E': {K^\prime} \ar@{>->}[r]_-{k^\prime} &{X^\prime} \ar@{->>}[r]_-{f^\prime} &{M}
}
\end{aligned}
\end{equation}
in $cc\text{-}SExt_M$ such that $\alpha_1$ and $\alpha$ are monomorphisms (resp., regular epimorphisms) in $\mathbf{Mon}.$ This characterisation allows us to give the full details of the proof of Proposition~\ref{prop:d_monos_reg_epis}.

\subsection*{Existence of kernel pairs in $cc\text{-}SExt_M$}
Recall the following very general fact:
\begin{lemma}
\label{lemma:known}
Consider a commutative diagram
\[
\xymatrix{
{A} \ar[d]_-{\alpha}  \ar[r]^-l &B \ar[d]_-{\beta} \ar[r]^-g &C \ar@{=}[d]\\
{A^\prime} \ar@{>->}[r]_-{k} &{B^\prime} \ar[r]_-{f} &{C}
}
\]
in a pointed category $\mathcal{C},$ where $(A^\prime,k)$ is a kernel of $f.$ Then $(A,l)$ is a kernel of $g$ if and only if the left square is a pullback.
\end{lemma}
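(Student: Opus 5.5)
We prove the two implications separately, working directly with the universal properties of kernels and pullbacks in the pointed category $\mathcal{C}$. Write $0$ for the zero morphisms. The commutativity of the right square gives $fk\alpha = f\beta l$, so $g = f\beta$ on the image of $l$ in the sense that $gl = f\beta l = fk\alpha = 0$. Thus $l$ always satisfies $gl=0$, and the only issue is the universal property.

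Suppose first that the left square is a pullback. Let $x\colon Z\to B$ be a morphism with $gx=0$. Then $f\beta x = gx = 0$. Since $(A',k)$ is a kernel of $f$, there is a unique $y\colon Z\to A'$ with $ky=\beta x$. The pair $(y,x)$ is then a cone over the cospan $A'\xrightarrow{k}B'\xleftarrow{\beta}B$, so the pullback property yields a unique $z\colon Z\to A$ with $lz=x$ and $\alpha z=y$. This gives existence of the factorisation through $l$. For uniqueness, suppose $lz'=x$ as well. Then $k\alpha z' = \beta l z' = \beta x = ky$, and $k$ is monic, so $\alpha z'=y$. The uniqueness part of the pullback property then forces $z'=z$. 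Hence $(A,l)$ is a kernel of $g$.

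Conversely, suppose $(A,l)$ is a kernel of $g$, and let $y\colon Z\to A'$ and $x\colon Z\to B$ satisfy $ky=\beta x$. Then $gx = f\beta x = fky = 0$, so $x=lz$ for a unique $z$. Moreover $k\alpha z=\beta l z=\beta x = ky$, so $\alpha z=y$ because $k$ is a monomorphism. Any other $z'$ with $lz'=x$ coincides with $z$, since $l$ is monic, being a kernel. Therefore the left square is a pullback.

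There is no real obstacle here. The only point to be careful about is where monicity is used: that of $k$ (as a kernel) to get the compatibility $\alpha z = y$ in both directions, and that of $l$ for uniqueness in the converse. Neither the regular-epi nature of $f$ nor anything specific to monoids is needed.
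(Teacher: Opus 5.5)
Your proof is correct. Both implications follow properly from the universal properties: monicity of $k$ gives $\alpha z=y$ in each direction, and monicity of $l$ gives uniqueness in the converse. There is one harmless slip. The equation $fk\alpha=f\beta l$ comes from the \emph{left} square, while the right square gives $g=f\beta$, which is what you actually use afterwards.

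The paper states this lemma without proof, recalling it as a general fact, so there is no argument there to compare against. The standard short proof is the pasting lemma for pullbacks:
\begin{itemize}
\item Saying that $(A',k)$ is a kernel of $f$ means that the square with sides $A'\to 0$, $k$, $f$ and $0\to C$ is a pullback.
\item By pasting, the left square is then a pullback if and only if the outer rectangle is.
\item The outer rectangle being a pullback says exactly that $(A,l)$ is a kernel of $f\beta=g$.
\end{itemize}
Your argument is precisely this pasting argument written out directly in terms of the universal properties. It has the advantage of being self-contained, while the pasting formulation makes clear why only pointedness and the kernel property of $k$ are needed.
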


Consider then the kernel pairs of $\alpha_1$ and $\alpha$ in $\mathbf{Mon},$ together with the induced morphism $k\times_{X'} k\colon{\mathrm{Eq}(\alpha_1)} \to {\mathrm{Eq}(\alpha),}$ to obtain the commutative diagram
\begin{equation}
\begin{aligned}
\label{eqn:iti}
\xymatrix{
{\overline{\mathbb{E}}:}\\
{\mathbb{E}:}\\
{\mathbb{E^\prime}:}
}
\xymatrixcolsep{3pc}
\xymatrix{
{\mathrm{Eq}(\alpha_1)} \ar@{}[rd]|{(\dag)} \ar@<-.6ex>[d]_-{\pi_1} \ar@<.6ex>[d]^-{\pi_2} \ar[r]^-{k\times_{X'} k} &{\mathrm{Eq}(\alpha)} \ar@<-.6ex>[d]_-{\rho_1} \ar@<.6ex>[d]^-{\rho_2} \ar[r]^-{f\rho_1=f\rho_2} &M\ar@{=}[d] \\
{K} \ar@{}[rd]|{(\ast)} \ar[d]_-{\alpha_1} \ar@{>->}[r]^-k &X \ar[d]_-{\alpha} \ar@{->>}[r]^-f &M \ar@{=}[d]\\
{K^\prime} \ar@{>->}[r]_-{k^\prime} &{X^\prime} \ar@{->>}[r]_-{f^\prime} &{M}.
}
\end{aligned}
\end{equation}
It follows that the two commutative squares $(\dag)$ are pullbacks, since $(\ast)$ is a pullback. By Lemma~\ref{lemma:known}, we conclude that $\overline{\mathbb{E}}$ is an extension of monoids in the sense of~\eqref{eqn:ses}. (Observe that $f\rho_1(=f\rho_2)$ is a regular epimorphism, because so are $f$ and $\rho_1$ and $\mathbf{Mon}$ is a regular category.)

As $K$ is commutative and cancellative and $\mathrm{Eq}(\alpha_1)$ is (isomorphic to) a submonoid of $K\times K,$ it is clear that $\mathrm{Eq}(\alpha_1)$ is also commutative and cancellative, and we have:
\begin{proposition}
\label{prop:kernel_pairs}
The sequence $\overline{\mathbb{E}}$ is a cc-Schreier extension and~\eqref{eqn:iti} is a kernel pair of $(\alpha_1,\alpha)\colon\mathbb{E}\rightarrow\mathbb{E^\prime}$ in $cc\text{-}SExt_M.$
\end{proposition}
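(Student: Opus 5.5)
We have to show two things: that $\overline{\mathbb{E}}$ is a Schreier extension, and that the diagram~\eqref{eqn:iti} has the universal property of a kernel pair in $cc\text{-}SExt_M$. The ingredients already available are that $\overline{\mathbb{E}}$ is a monoid extension (Lemma~\ref{lemma:known}) and that $\mathrm{Eq}(\alpha_1)$ is commutative and cancellative.

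\textbf{Representatives of $\overline{\mathbb{E}}$.} For $m\in M$, pick a representative $u_m\in B_m$ for $\mathbb{E}$. We claim that $(u_m,u_m)\in\mathrm{Eq}(\alpha)$ is a representative of $m$ for $\overline{\mathbb{E}}$.

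Let $(x,y)\in\mathrm{Eq}(\alpha)$ with $f(x)=f(y)=m$. Write $x=kq(x)+u_m$ and $y=kq(y)+u_m$ using~\eqref{eqn:x=kq(x)+u}. Applying $\alpha$ gives
\[
k'\alpha_1(q(x))+\alpha(u_m)=k'\alpha_1(q(y))+\alpha(u_m).
\]
Since $\alpha(u_m)$ is a representative in $\mathbb{E}'$ ($\alpha$ preserves representatives), Lemma~\ref{lemma:rep_invertible}(1) yields $\alpha_1(q(x))=\alpha_1(q(y))$. Hence $(q(x),q(y))\in\mathrm{Eq}(\alpha_1)$ and
\[
(x,y)=(k\times_{X'}k)\big(q(x),q(y)\big)+(u_m,u_m).
\]
Uniqueness of this decomposition follows componentwise from the uniqueness of $q(x)$ and $q(y)$ in $\mathbb{E}$. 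So $\overline{\mathbb{E}}$ is a Schreier extension, and with the kernel remarks above it is a cc-Schreier extension.

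By Proposition~\ref{prop:rep_pres}, the projections $\rho_1,\rho_2$ send $(u_m,u_m)$ to $u_m$ and therefore preserve all representatives. Thus $(\pi_i,\rho_i)$ are morphisms in $cc\text{-}SExt_M$.

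\textbf{Universal property.} Let $\mathbb{F}\colon L\rightarrowtail Y\twoheadrightarrow M$ be a cc-Schreier extension, and let $(\lambda_1,\lambda),(\mu_1,\mu)\colon\mathbb{F}\to\mathbb{E}$ be morphisms with $(\alpha_1,\alpha)(\lambda_1,\lambda)=(\alpha_1,\alpha)(\mu_1,\mu)$. The kernel pairs in $\mathbf{Mon}$ give unique monoid homomorphisms
\[
\langle\lambda_1,\mu_1\rangle\colon L\to\mathrm{Eq}(\alpha_1),\qquad \langle\lambda,\mu\rangle\colon Y\to\mathrm{Eq}(\alpha).
\]
Commutativity of the two squares of the induced morphism $\mathbb{F}\to\overline{\mathbb{E}}$ follows from the corresponding squares for $\lambda$ and $\mu$ together with joint monicity of the projections.

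The step needing care, and in my view the main obstacle, is that $\langle\lambda,\mu\rangle$ preserves representatives. For $v\in B_m(\mathbb{F})$, both $\lambda(v)$ and $\mu(v)$ lie in $B_m(\mathbb{E})$. Writing $\lambda(v)=k(a)+u_m$ and $\mu(v)=k(b)+u_m$, Lemma~\ref{lemma:rep_invertible}(2) makes $a,b$ invertible. Moreover $\alpha\lambda(v)=\alpha\mu(v)$, so the argument of the previous step gives $\alpha_1(a)=\alpha_1(b)$. Hence
\[
(\lambda(v),\mu(v))=(k\times_{X'}k)(a,b)+(u_m,u_m),
\]
where $(a,b)$ is invertible in $\mathrm{Eq}(\alpha_1)$ because its inverse $(-a,-b)$ also satisfies $\alpha_1(-a)=\alpha_1(-b)$. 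Lemma~\ref{lemma:rep_invertible}(3) then shows $(\lambda(v),\mu(v))$ is a representative of $m$ for $\overline{\mathbb{E}}$.

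Uniqueness of the factorization is inherited from $\mathbf{Mon}$, since morphisms in $SExt_M$ are determined by their middle components. This completes the proof that~\eqref{eqn:iti} is a kernel pair of $(\alpha_1,\alpha)$ in $cc\text{-}SExt_M$.
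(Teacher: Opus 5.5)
Your proof is correct and follows essentially the same route as the paper: the same representatives $(u_m,u_m)$ obtained from the decomposition $x=kq(x)+u_m$, Proposition~\ref{prop:rep_pres} for the projections, and the maps $\langle\lambda_1,\mu_1\rangle$, $\langle\lambda,\mu\rangle$ induced by the kernel pairs in $\mathbf{Mon}$. The only difference is that you explicitly justify why $\langle\lambda,\mu\rangle$ preserves representatives, via Lemma~\ref{lemma:rep_invertible}(2),(3) and the invertibility of $(a,b)$ in $\mathrm{Eq}(\alpha_1)$; the paper dispatches this step in one line.
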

\begin{proof}
If $u_m$ is a representative of $m\in M$ for $\mathbb{E}$ then $(u_m,u_m)$ is a representative of $m$ for $\overline{\mathbb{E}}.$ Indeed, observe that if $(x,y)\in\mathrm{Eq}(\alpha),$ then $f(x)=f^\prime\alpha(x)=f^\prime\alpha(y)=f(y),$ so that $x=kq(x)+u_m$ and $y=kq(y)+u_m$ using~\eqref{eqn:x=kq(x)+u} (where $m=f(x)=f(y)).$ Since $\alpha(x)=\alpha(y),$ it follows that $\alpha(kq(x))+\alpha(u_m)=\alpha(kq(y))+\alpha(u_m)$; consequently, $\alpha(kq(x))=\alpha(kq(y))$ because of Lemma~\ref{lemma:rep_invertible}(1) and the fact that $\alpha$ preserves representatives. The couple $(q(x),q(y))$ is in $\mathrm{Eq}(\alpha_1),$ because $k^\prime\alpha_1(q(x))=\alpha (kq(x))=\alpha (kq(y))=k^\prime\alpha_1(q(y))$ and $k^\prime$ is monomorphic. Then $(x,y)=k\times_{X'} k(q(x),q(y))+(u_m,u_m),$ for the unique element $(q(x),q(y))\in \mathrm{Eq}(\alpha_1).$
Moreover, by Proposition~\ref{prop:rep_pres} the morphisms $\rho_1$ and $\rho_2$ preserve all representatives.

Next, we need to prove that the commutative square
\[
\xymatrixcolsep{2.5pc}
\xymatrix{
{\overline{\mathbb{E}}} \ar[d]_-{(\pi_1,\rho_1)} \ar[r]^-{(\pi_2,\rho_2)} &{\mathbb{E}} \ar[d]^-{(\alpha_1,\alpha)} \\
{\mathbb{E}} \ar[r]_-{(\alpha_1,\alpha)} &{\mathbb{E^\prime}}
}
\]
is a pullback in $cc\text{-}SExt_M.$ Suppose that two morphisms $(t_1,t),(h_1,h)$ of cc-Schreier extensions
\begin{equation*}
\begin{aligned}
\xymatrix{
{\mathbb{F}:}\\
{\mathbb{E}:}
}
\xymatrix{
{L}  \ar@<-.6ex>[d]_-{t_1} \ar@<.6ex>[d]^-{h_1} \ar@{>->}[r]^-{l} &{Y} \ar@<-.6ex>[d]_-{t} \ar@<.6ex>[d]^-{h} \ar@{->>}[r]^-{g} &M\ar@{=}[d] \\
{K} \ar@{>->}[r]_-k &X \ar@{->>}[r]_-f &M
}
\end{aligned}
\end{equation*}
are such that $\alpha_1 t_1=\alpha_1 h_1$ and $\alpha t=\alpha h.$ Then, by the universal property of the kernel pairs, we get two unique monoid homomorphisms
\begin{equation}
\begin{aligned}
\label{eqn:phi_1_phi}
\xymatrix{
{L} \ar@/^1pc/[rrd]^-{h_1} \ar@/_1pc/[rdd]_-{t_1} \ar@{.>}[rd]|{\varphi_1=\la t_1,h_1\ra} &\ &\ \\
&{\mathrm{Eq}(\alpha_1)} \pullback \ar[d]_-{\pi_1} \ar[r]^-{\pi_2} &K \ar[d]^-{\alpha_1} \\
&K \ar[r]_-{\alpha_1} &{K^\prime,}
}
\
\xymatrix{
{Y} \ar@/^1pc/[rrd]^-{h} \ar@/_1pc/[rdd]_-{t} \ar@{.>}[rd]|{\varphi=\la t,h\ra} &\ &\ \\
&{\mathrm{Eq}(\alpha)} \pullback\ar[d]_-{\rho_1} \ar[r]^-{\rho_2}&X \ar[d]^-{\alpha} \\
&X \ar[r]_-{\alpha} &{X^\prime.}
}
\end{aligned}
\end{equation}
Since $\rho_1$ and $\rho_2$ are jointly monomorphic and
\[
\begin{cases}
\rho_1 k\times_{X'} k\varphi_1=k\pi_1\varphi_1=kt_1=tl=\rho_1\varphi l\\
\rho_2 k\times_{X'} k\varphi_1=k\pi_2\varphi_1=kh_1=hl=\rho_2\varphi l,
\end{cases}
\]
it follows that $k\times_{X'} k\varphi_1=\varphi l.$ Moreover, $f\rho_1\varphi=ft=g,$ so that
\begin{equation*}
\begin{aligned}
\xymatrix{
{\mathbb{F}:}\\
{\overline{\mathbb{E}}:}
}
\xymatrix{
{L} \ar[d]_-{\varphi_1} \ar@{>->}[r]^-{l} &{Y} \ar[d]_-{\varphi} \ar@{->>}[r]^-{g} &M\ar@{=}[d] \\
{\mathrm{Eq}(\alpha_1)} \ar@{>->}[r]_-{k\times_{X'} k} &{\mathrm{Eq}(\alpha)} \ar@{->>}[r]_-{f\rho_1} &M
}
\end{aligned}
\end{equation*}
is a morphism of monoid extensions. It is a morphism of Schreier extensions, because $\varphi=\la t,h\ra$ and, by assumption, $t$ and $h$ preserve the representatives. If $(\psi_1,\psi)\colon\mathbb{F}\longrightarrow\overline{\mathbb{E}}$ is another morphism of Schreier extensions such that $\pi_1\psi_1=t_1,$ $\rho_1\psi=t$ and $\pi_2\psi_1=h_1,$ $\rho_2\psi=h,$ it immediately follows that $\psi_1=\varphi_1$ and $\psi=\varphi$ by the uniqueness of $\varphi_1$ and $\varphi$ in~\eqref{eqn:phi_1_phi}.
\end{proof}

\subsection*{Characterisation of monomorphisms and regular epimorphisms}
Now we can prove that~\eqref{eqn:oto} is a monomorphism in $cc\text{-}SExt_M$ if and only if $\alpha_1$ and $\alpha$ are monomorphisms in $\mathbf{Mon},$ and similarly for regular epimorphisms. We shall proceed by steps.

\begin{proposition}
\label{prop:reg_is_reg}
If $\alpha_1$ and $\alpha$ are regular epimorphisms in $\mathbf{Mon},$ then~\eqref{eqn:oto} is a regular epimorphism in $cc\text{-}SExt_M.$
\end{proposition}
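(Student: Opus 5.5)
The plan is to show that $(\alpha_1,\alpha)$ is the coequaliser in $cc\text{-}SExt_M$ of its own kernel pair, which exists and is described by Proposition~\ref{prop:kernel_pairs}. Concretely, let $(\lambda_1,\lambda)\colon\mathbb{E}\to\mathbb{F}$ be a morphism in $cc\text{-}SExt_M$, with $\mathbb{F}\colon L\rightarrowtail Y\twoheadrightarrow M$, that coequalises the two projections $(\pi_1,\rho_1),(\pi_2,\rho_2)\colon\overline{\mathbb{E}}\to\mathbb{E}$. Then $\lambda_1\pi_1=\lambda_1\pi_2$ and $\lambda\rho_1=\lambda\rho_2$. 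Since $\alpha_1$ and $\alpha$ are regular epimorphisms in $\mathbf{Mon}$, each is the coequaliser of its kernel pair. We therefore get unique monoid homomorphisms $\beta_1\colon K'\to L$ and $\beta\colon X'\to Y$ with $\beta_1\alpha_1=\lambda_1$ and $\beta\alpha=\lambda$.

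Next I check that $(\beta_1,\beta)$ is a morphism of Schreier extensions $\mathbb{E}'\to\mathbb{F}$. This breaks into three checks.

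\begin{itemize}
\item The square $\beta k'=l\beta_1$ holds because $\beta k'\alpha_1=\beta\alpha k=\lambda k=l\lambda_1=l\beta_1\alpha_1$, and $\alpha_1$ is epi.
\item The triangle $g\beta=f'$ holds because $g\beta\alpha=g\lambda=f=f'\alpha$, and $\alpha$ is epi.
\item Preservation of representatives needs more care, and is addressed below.
\end{itemize}

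Once these hold, uniqueness of the factorisation in $cc\text{-}SExt_M$ is immediate. Morphisms there are determined by their underlying monoid maps, and $\alpha_1,\alpha$ are epimorphisms in $\mathbf{Mon}$.

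The main obstacle is the third check: showing that $\beta(B(\mathbb{E}'))\subseteq B(\mathbb{F})$. By Proposition~\ref{prop:rep_pres}, it suffices to find, for each $m\in M$, one representative of $m$ in $\mathbb{E}'$ that $\beta$ sends to a representative in $\mathbb{F}$. I would take $\alpha(u_m)$, where $u_m\in B_m$ for $\mathbb{E}$; this is a representative for $\mathbb{E}'$ because $\alpha$ preserves representatives. Then $\beta(\alpha(u_m))=\lambda(u_m)$, which lies in $B(\mathbb{F})$ since $\lambda$ preserves representatives. This settles the step.

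I also need $\mathbb{E}'$ itself to sit in the ambient category, but it is an object of $cc\text{-}SExt_M$ by assumption. Finally, I should double-check that the kernel pair used is genuinely the one in $cc\text{-}SExt_M$; this is exactly Proposition~\ref{prop:kernel_pairs}, so being a coequaliser of it makes $(\alpha_1,\alpha)$ a regular epimorphism. If one prefers the definition of regular epimorphism as a coequaliser of \emph{some} pair, this argument gives it directly.
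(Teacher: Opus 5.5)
Your proposal is correct and follows the paper's own proof essentially step for step. You show that $(\alpha_1,\alpha)$ coequalises its kernel pair from Proposition~\ref{prop:kernel_pairs}, factor componentwise through the coequalisers $\alpha_1$ and $\alpha$ in $\mathbf{Mon}$, get uniqueness from $\alpha_1,\alpha$ being epic, and verify preservation of representatives via Proposition~\ref{prop:rep_pres} using $\beta\alpha(u_m)=\lambda(u_m)$, while also spelling out the commutativity checks that the paper leaves as ``easy to check''.
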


\begin{proof}
We show that under our assumptions $(\alpha_1,\alpha)$ is a coequaliser of its kernel pair~\eqref{eqn:iti}. Suppose that $(\beta_1,\beta)\colon\mathbb{E}\longrightarrow\mathbb{F}$
\[
\xymatrix{
{\overline{\mathbb{E}}:}\\
{\mathbb{E}:}\\
{\mathbb{F}:}
}
\xymatrixcolsep{3pc}
\xymatrix{
{\mathrm{Eq}(\alpha_1)} \ar@<-.6ex>[d]_-{\pi_1} \ar@<.6ex>[d]^-{\pi_2} \ar[r]^-{k\times_{X'} k} &{\mathrm{Eq}(\alpha)} \ar@<-.6ex>[d]_-{\rho_1} \ar@<.6ex>[d]^-{\rho_2} \ar[r]^-{f\rho_1=f\rho_2} &M\ar@{=}[d] \\
{K} \ar[d]_-{\beta_1} \ar@{>->}[r]^-k &X \ar[d]_-{\beta} \ar@{->>}[r]^-f &M \ar@{=}[d]\\
{L} \ar@{>->}[r]_-{l} &{Y} \ar@{->>}[r]_-{g} &{M}
}
\]
is a morphism of cc-Schreier extensions such that $(\beta_1,\beta)(\pi_1,\rho_1)=(\beta_1,\beta)(\pi_2,\rho_2).$ Then, as $\alpha_1$ is a coequaliser of $(\pi_1,\pi_2)$ and $\alpha$ is a coequaliser of $(\rho_1,\rho_2),$ there are unique factorisations
\begin{equation*}
\begin{aligned}
\xymatrix{
{\mathrm{Eq}(\alpha_1)} \ar@<.6ex>[r]^-{\pi_1} \ar@<-.6ex>[r]_-{\pi_2} &K \ar@{->>}[r]^-{\alpha_1} \ar[d]_-{\beta_1} &{K^\prime} \ar@{.>}[ld]^-{\gamma_1}\\
&{L,}
}
\
\
\xymatrix{
{\mathrm{Eq}(\alpha)} \ar@<.6ex>[r]^-{\rho_1} \ar@<-.6ex>[r]_-{\rho_2} &X \ar@{->>}[r]^-{\alpha} \ar[d]_-{\beta} &{X^\prime} \ar@{.>}[ld]^-{\gamma}\\
&{Y.}
}
\end{aligned}
\end{equation*}
It is easy to check that $(\gamma_1,\gamma)\colon\E' \to \mathbb{F}$ is a morphism of monoid extensions such that $(\beta_1,\beta)=(\gamma_1,\gamma)(\alpha_1,\alpha).$ The uniqueness of such a morphism $(\gamma_1,\gamma)$ follows from the fact that $\alpha_1$ and $\alpha$ are (regular) epimorphisms. We are left to prove that $(\gamma_1,\gamma)\colon\mathbb{E^\prime}\longrightarrow\mathbb{F}$ is a morphism of Schreier extensions, i.e. that $\gamma$ preserves the representatives. By Proposition~\ref{prop:rep_pres}, it is enough to show that for every $m\in M$ some representative of $m$ for $\E'$ is preserved by $\gamma.$ Since $\gamma\big(\alpha(u_m)\big)=\beta(u_m)$ for every representative $u_m$ of $m$ for $\E$ and, by assumption, both $\alpha$ and $\beta$ preserve the representatives, the result follows.
\end{proof}

Concerning monomorphisms, it is clear that if $\alpha_1$ and $\alpha$ are monomorphisms in $\mathbf{Mon}$ then~\eqref{eqn:oto} is a monomorphism in $cc\text{-}SExt_M.$ The converse is also true (as we show next), and we can complete our characterisation of monomorphisms and regular epimorphisms in $cc\text{-}SExt_M$ as follows:

\begin{proposition}
\label{prop:char_complete}
Consider a morphism~\eqref{eqn:oto} of cc-Schreier extensions. Then:
\begin{enumerate}
 \item[\em{(1)}] $(\alpha_1,\alpha)$ is a monomorphism in $cc\text{-}SExt_M$ if and only if $\alpha_1$ and $\alpha$ are monomorphisms in $\mathbf{Mon};$
 \item[\em{(2)}] $(\alpha_1,\alpha)$ is a regular epirmorphism in $cc\text{-}SExt_M$ if and only if $\alpha_1$ and $\alpha$ are regular epimorphisms in $\mathbf{Mon}.$
\end{enumerate}
\end{proposition}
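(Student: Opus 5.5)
For (1), the \enquote{if} direction has already been observed, so I will only prove the converse. Suppose $(\alpha_1,\alpha)$ is a monomorphism in $cc\text{-}SExt_M$. Its kernel pair~\eqref{eqn:iti}, which exists in $cc\text{-}SExt_M$ by Proposition~\ref{prop:kernel_pairs}, then has equal projections: $(\pi_1,\rho_1)=(\pi_2,\rho_2)$. So $\pi_1=\pi_2$ on $\mathrm{Eq}(\alpha_1)$ and $\rho_1=\rho_2$ on $\mathrm{Eq}(\alpha)$. These are exactly the kernel pairs in $\mathbf{Mon}$, so $\alpha_1$ and $\alpha$ are injective. The point to stress is that the kernel pair in the subcategory coincides with the $\mathbf{Mon}$ one; this is what Proposition~\ref{prop:kernel_pairs} provides.

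For (2), Proposition~\ref{prop:reg_is_reg} gives the \enquote{if} direction. For the converse, suppose $(\alpha_1,\alpha)$ is a regular epimorphism in $cc\text{-}SExt_M$. Since it has a kernel pair there, it is the coequaliser of that kernel pair~\eqref{eqn:iti}.

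I will factor it through its image in $\mathbf{Mon}$. Let $X''=\alpha(X)\subseteq X'$ and $K''=\alpha_1(K)\subseteq K'$; the latter is commutative and cancellative as a submonoid. Let $f''$ be the restriction of $f'$, which is surjective because $f'\alpha=f$. I need to check two things.
\begin{enumerate}
\item[(a)] $K''$, via the restriction of $k'$, is a kernel of $f''$. If $f'(\alpha(x))=1$, then $f(x)=1$, so $x=k(a)$ and $\alpha(x)=k'\alpha_1(a)$.
\item[(b)] $\mathbb{E}''$ is Schreier with representatives $\alpha(u_m)$. Take $\alpha(x)$ with $f(x)=m$. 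Then $\alpha(x)=k'\alpha_1 q(x)+\alpha(u_m)$, and uniqueness of the coefficient comes from $\alpha(u_m)\in B'_m$ together with injectivity of $k'$.
\end{enumerate}
This gives a factorisation $(\alpha_1,\alpha)=(\iota_1,\iota)\circ(\alpha_1,\alpha)$, where the second map is surjective onto $\mathbb{E}''$ and $(\iota_1,\iota)$ is the inclusion. The inclusion preserves representatives, since the representatives of $\mathbb{E}''$ are mapped to representatives of $\mathbb{E}'$. So $(\iota_1,\iota)$ is a morphism, and it is a monomorphism.

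Finally I will use the general fact that a regular epimorphism factoring through a monomorphism forces that monomorphism to be an isomorphism. The surjective part coequalises the kernel pair, because its components agree with $\alpha_1$ and $\alpha$. The universal property therefore gives a $\delta\colon\mathbb{E}'\to\mathbb{E}''$ with $\delta\circ(\alpha_1,\alpha)$ equal to the surjective part. Then $(\iota_1,\iota)\delta(\alpha_1,\alpha)=(\alpha_1,\alpha)$, and since $(\alpha_1,\alpha)$ is an epimorphism, $(\iota_1,\iota)\delta=\mathrm{id}$. Hence $\iota$ and $\iota_1$ are surjective, so $\alpha$ and $\alpha_1$ are surjective, i.e.\ regular epimorphisms in $\mathbf{Mon}$.

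I expect step (b) to be the main obstacle, since it is where the Schreier structure of the image extension has to be checked by hand.
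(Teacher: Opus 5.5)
Your proof is correct. For (2) it is essentially the paper's argument, with two differences in presentation:
\begin{itemize}
\item Your image extension $\mathbb{E}''$ is the paper's $\mathbb{P}$. The paper obtains $P$ as the pullback of $n\colon I_\alpha\to X'$ along $k'$ and uses that regular epimorphisms in $\mathbf{Mon}$ are pullback-stable to get $e_1$ surjective. You identify $K''=\alpha_1(K)$ directly by the computation in (a).
\item Your last step is the extremal-epimorphism property of regular epimorphisms, which you write out instead of quoting.
\end{itemize}

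The genuine difference is in (1). The paper reuses the factorisation $(\alpha_1,\alpha)=(n_1,n)(e_1,e)$. If $(\alpha_1,\alpha)$ is monic, then $(e_1,e)$ is both monic and a regular epimorphism (by Proposition~\ref{prop:reg_is_reg}), hence an isomorphism. You instead use the general fact that a morphism is monic iff the two projections of its kernel pair coincide. You combine this with Proposition~\ref{prop:kernel_pairs}, which says the kernel pair in $cc\text{-}SExt_M$ is carried by the $\mathbf{Mon}$-kernel pairs $\mathrm{Eq}(\alpha_1)$ and $\mathrm{Eq}(\alpha)$.

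Your route for (1) is shorter and does not need the factorisation at all. The paper's uniform treatment yields, in one construction, the (regular epimorphism, monomorphism) factorisation in $cc\text{-}SExt_M$ that it later uses in the remark on regularity. Both routes ultimately rest on Proposition~\ref{prop:kernel_pairs}; the paper uses it through Proposition~\ref{prop:reg_is_reg}.

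One small point to tighten. Your reason that the inclusion $(\iota_1,\iota)$ preserves representatives ("the representatives of $\mathbb{E}''$ are mapped to representatives of $\mathbb{E}'$") restates what has to be shown. The clean justification is Proposition~\ref{prop:rep_pres}: for each $m$, the representative $\alpha(u_m)$ of $\mathbb{E}''$ already lies in $B'_m$. Similarly, the corestriction $\mathbb{E}\to\mathbb{E}''$ is a morphism of Schreier extensions because your argument in (b) works for every $u_m\in B_m$.
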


\begin{proof}
Consider a (regular epimorphism, monomorphism) factorisation $\alpha=ne$
\[
\xymatrix{
X \ar[rr]^\alpha \ar@{->>}[rd]_-e &\ &{X^\prime} \\
&{I_\alpha} \ar@{>->}[ru]_-n &\
}
\]
of $\alpha$ in $\mathbf{Mon},$ and let $(P,n_1,\lambda)$ be a pullback of $n$ along $k^\prime.$ Then in the diagram
\begin{equation}
\begin{aligned}
\label{eqn:fact}
\xymatrix{
{\mathbb{E}:}\\
{\mathbb{P}:}\\
{\mathbb{E^\prime}:}
}
\xymatrixcolsep{3pc}
\xymatrix{
K \ar@{}[rd]|{(a)} \ar@/_1.5pc/[dd]_(.72){\alpha_1} \ar@{>->}[r]^-k \ar[d]^-{e_1} &X \ar@{->>}[d]_-e \ar@/^1.2pc/[dd]^(.75)\alpha \ar@{->>}[r]^-f &M \ar@{=}[d] \\
P \ar@{>->}[r]^{\lambda} \pullback \ar@{>->}[d]^-{n_1} &{I_\alpha} \ar@{>->}[d]_-n \ar[r]^-{f^\prime n} &M \ar@{=}[d] \\
{K^\prime} \ar@{>->}[r]_-{k^\prime} &{X^\prime} \ar@{->>}[r]_-{f^\prime} &{M,}
}
\end{aligned}
\end{equation}
where $e_1$ is the unique morphism induced by $\alpha_1$ and $ek$ through the pullback $P,$ the square $(a)$ is also a pullback (using the fact that the square $(\ast)$ in~\eqref{eqn:oto} is a pullback, as we already argued). Thus $e_1$ is a regular epimorphism in $\mathbf{Mon},$ and $\alpha_1=n_1 e_1$ is a (regular epimorphism, monomorphism) factorisation of $\alpha_1.$ Moreover, observe that $f^\prime n$ is a regular epimorphism, because so is $(f^\prime n)e=f,$ and by Lemma~\ref{lemma:known} $(P,\lambda)$ is a kernel of $f^\prime n.$

We prove that the extension of monoids $\mathbb{P}$ is a cc-Schreier extension and that $(e_1,e),$ $(n_1,n)$ are morphisms of Schreier extensions. First observe that $P$ is commutative and cancellative (as, by the monomorphism $n_1,$ it can be realised as a submonoid of $K^\prime$). Next, we claim that, for every representative $u_m$ of $m$ for $\E,$ $e(u_m)$ works as a representative of $m$ for $\mathbb{P}.$ Indeed, given $y\in I_\alpha$ we have $y=e(x)$ for some $x\in X$ (because $e$ is a regular epimorphism in $\mathbf{Mon},$ i.e. a surjective monoid homomorphism). Using~\eqref{eqn:x=kq(x)+u}, we get $y=e(x)=e\big(kq(x)+u_m\big)=e(kq(x))+e(u_m)=\lambda(e_1q(x))+e(u_m).$ Suppose that $y=\lambda(z)+e(u_m)$ for some other $z\in P.$ We have $z=e_1(b)$ for some $b\in K$ (because $e_1$ is a regular epimorphism), thus
\begin{equation*}
\begin{split}
	e\big(kq(x)+u_m\big)=y=\lambda e_1(b)+e(u_m)=e\big(k(b)+u_m\big) \Rightarrow\\
	\alpha kq(x)+\alpha(u_m)=ne\big(kq(x)+u_m\big)=ne\big(k(b)+u_m\big)=\alpha k(b)+\alpha(u_m) \Rightarrow \\
	\alpha kq(x)=\alpha k(b) \Rightarrow  \\
	k^\prime\alpha_1(q(x))=k^\prime\alpha_1(b) \Rightarrow \\
	\alpha_1(q(x))=\alpha_1(b) \Rightarrow\\
	e_1(q(x))=e_1(b)=z,
\end{split}
\end{equation*}
using the fact that $\alpha(u_m)$ is a representative, Lemma~\ref{lemma:rep_invertible}(1) and that $k^\prime$ and $n_1$ are monomorphic. Then $\mathbb{P}$ is a Schreier extension and $e$ preserves the representatives by construction. Since $\alpha=ne$ and $\alpha$ preserves the representatives, we conclude that $n$ preserves the representatives as well (using Proposition~\ref{prop:rep_pres}).

By Proposition~\ref{prop:reg_is_reg}, $(e_1,e)\colon\mathbb{E}\longrightarrow\mathbb{P}$ is a regular epimorphism in $cc\text{-}SExt_M,$ and as $n_1$ and $n$ are monomorphisms of monoids, $(n_1,n)$ is monomorphic in $cc\text{-}SExt_M.$ From $(\alpha_1,\alpha)=(n_1,n)(e_1,e),$ it follows that if $(\alpha_1,\alpha)$ is a monomorphism in $cc\text{-}SExt_M,$ so is $(e_1,e),$ which implies that $(e_1,e)$ is an isomorphism in $cc\text{-}SExt_M.$ This, in turn, implies that $e_1$ and $e$ are isomorphisms in $\mathbf{Mon},$ and we conclude that $\alpha_1=n_1e_1$ and $\alpha=ne$ are monomorphic in $\mathbf{Mon}.$ Similarly, if $(\alpha_1,\alpha)$ is a regular epimorphism in $cc\text{-}SExt_M,$ then $n_1$ and $n$ are isomorphisms of monoids in $\mathbf{Mon},$ and we conclude that $\alpha_1=n_1 e_1$ and $\alpha=ne$ are regular epimorphisms in $\mathbf{Mon}.$
\end{proof}

\begin{remark}
By the previous proposition, every morphism~\eqref{eqn:oto} in $cc\text{-}SExt_M$ admits a factorisation~\eqref{eqn:fact} as a regular epimorphism followed by a monomorphism, and we know from Proposition~\ref{prop:kernel_pairs} that kernel pairs exist. Moreover, it is not difficult to prove that regular epimorphisms are preserved by pullbacks in $cc\text{-}SExt_M$ whenever these exist, so that $cc\text{-}SExt_M$ is a regular category in the sense of \cite[I.1.3]{barr} and \cite[Definition 2.1.1]{borceux-2}. Beware, though, that $cc\text{-}SExt_M$ is \emph{not} finitely complete, in general: indeed, the property of being closed under pullbacks fails even in the case of group extensions. The problem here is that the pullback of two morphisms $\alpha\colon f\rightarrow g,$ $\beta\colon h\rightarrow g$
\[
\xymatrix{
X \ar[rd]_-f \ar[r]^-\alpha &Y \ar[d]^-g &Z \ar[ld]^-h \ar[l]_-\beta \\
&G &\
}
\]
in $\mathbf{Gp}/G$ is constructed by means of a pullback
\[
\xymatrix{
{X\times_Y Z} \ar[r]^-{\pi_2} \ar[d]_-{\pi_1} \pullback &Z \ar[d]^-\beta \\
X \ar[r]_-\alpha &Y
}
\]
of $\beta$ along $\alpha$ in $\mathbf{Gp},$ but even if $f,$ $g$ and $h$ are surjective, the composite morphism $f\pi_1=h\pi_2$ need not be surjective, in general. The same situation happens in $cc\text{-}SExt_M.$
\end{remark}

\subsection*{The direction functor preserves and reflects monomorphisms and regular epimorphisms}
We can now prove that the functor $d\colon cc\text{-}SExt_M\longrightarrow cc\text{-}SPt_M$ preserves and reflects monomorphisms and regular epimorphisms. By the above results, a morphism~\eqref{eqn:oto} in $cc\text{-}SExt_M$ is a monomorphism if and only if both $\alpha_1$ and $\alpha$ are monomorphisms in $\mathbf{Mon},$ and similarly for regular epimorhisms. The same characterisation holds for monomorphisms and regular epimorphisms in $cc\text{-}SPt_M.$

Suppose that by applying $d$ to  a morphism~\eqref{eqn:oto} in $cc\text{-}SExt_M$ we obtain a monomorphism
\begin{equation}
\begin{aligned}
\label{eqn:da}
\xymatrix{
{K} \ar[d]_-{\alpha_1} \ar@{>->}[r]^-{\overline{\kappa}} &{df} \ar[d]_-{d(\alpha)} \ar@<.5ex>[r]^-{\overline{f}} &M \ar@{=}[d] \ar@<.5ex>[l]^-{\overline{s}} \\
{K^\prime} \ar@{>->}[r]_-{\overline{\kappa^\prime}} &{df^\prime} \ar@<.5ex>[r]^-{\overline{f^\prime}} &{M} \ar@<.5ex>[l]^-{\overline{s^\prime}}
}
\end{aligned}
\end{equation}
of cc-Schreier points. Then, in particular, $\alpha_1$ is monomorphic in $\mathbf{Mon},$ and the Short Five Lemma (Proposition~\ref{prop:short_five_lemma_schreier}) for cc-Schreier extensions guarantees that $\alpha$ is also a monomorphism of monoids; this entails that~\eqref{eqn:oto} is a monomorphism in $cc\text{-}SExt_M,$ so that $d$ reflects monomorphisms. Conversely, if~\eqref{eqn:oto} is a monomorphism in $cc\text{-}SExt_M,$ by Proposition~\ref{prop:char_complete} $\alpha_1$ is a monomorphism in $\mathbf{Mon}.$ By the Split Short Five Lemma for Schreier points (see \cite{schreier_book}, Proposition $2.3.10$), $d(\alpha)$ is also a monomorphism of monoids, and we conclude that~\eqref{eqn:da} is a monomorphism in $cc\text{-}SPt_M,$ i.e. that $d$ preserves monomorphisms. A similar argument shows that $d$ reflects and preserves regular epimorphisms, which completes the proof of Proposition~\ref{prop:d_monos_reg_epis}.

We conclude by observing that the characterisation of monomorphisms and regular epimorphisms we got for $cc\text{-}SExt_M$ does not make use of cancellativity, so the same proof works for the category $smod\text{-}SExt_M.$

\end{document}